\documentclass{article}
\usepackage[utf8]{inputenc}
\usepackage{amsmath}
\usepackage{amssymb}
\usepackage{graphicx}
\usepackage{dsfont}
\usepackage{upgreek}
\usepackage{textcomp}
\usepackage{braket}
\usepackage[margin=1in]{geometry}
\usepackage{amsthm}
\usepackage{mathrsfs}
\usepackage{mathtools}
\usepackage{biblatex}
\addbibresource{mainchoi.bib}

\numberwithin{equation}{section}
\newtheorem{theorem}{Theorem}[section]
\newtheorem{lemma}{Lemma}[section]
\newtheorem{corollary}{Corollary}[section]
\newtheorem{remark}{Remark}[section]

\newtheorem{proposition}{Proposition}[section]
\usepackage{scalerel,stackengine}
\stackMath
\newcommand\reallywidehat[1]{%
\savestack{\tmpbox}{\stretchto{%
  \scaleto{%
    \scalerel*[\widthof{\ensuremath{#1}}]{\kern-.6pt\bigwedge\kern-.6pt}%
    {\rule[-\textheight/2]{1ex}{\textheight}}%
  }{\textheight}%
}{0.5ex}}%
\stackon[1pt]{#1}{\tmpbox}%
}
\parskip 1ex
\title{Multilinear Weighted Estimates and Quantum Zakharov System}
\author{BJ Choi}
\date{}
\begin{document}
\maketitle
\begin{abstract}
We consider the compact case of one-dimensional quantum Zakharov system, as an initial-value problem with periodic boundary conditions. We apply the Bourgain norm method to show low regularity local well-posedness for a certain class of Sobolev exponents that are sharp up to the boundary, under the condition that Schr\"odinger Sobolev regularity is non-negative. Using the conservation law and energy method, we show global well-posedness for sufficiently regular initial data, without any smallness assumption. Lastly we show the semi-classical limit as $\epsilon\rightarrow 0$ on a compact time interval, whereas the quantum perturbation proves to be singular on an infinite time interval.    
\end{abstract}
\section{Introduction.}\label{intro}
We consider the well-posedness and the semi-classical limit of the compact one-dimensional quantum Zakharov system (QZS).  Thus we assume the periodic boundary condition
\begin{equation}\label{eq:main}
      \begin{cases}
    (i\partial_t + \alpha \partial_{xx} - \epsilon^2\partial_{xx}^2)u = un,\:(x,t) \in \mathbb{T}\times [0,T]\\
    (\beta^{-2}\partial_{tt} - \partial_{xx} + \epsilon^2 \partial_{xx}^2)n = \partial_{xx}(|u|^2),\\
    (u(x,0),n(x,0),\partial_t n(x,0)) = (u_0, n_0, n_1) \in H^{s,l} \coloneqq H^s(\mathbb{T}) \times H^l (\mathbb{T}) \times H^{l-2}(\mathbb{T}),
    \end{cases}
\end{equation}
where $u$ is complex-valued, $n$ is real-valued, $\mathbb{T} = \frac{\mathbb{R}}{2\pi \mathbb{Z}}$, $T>0$ is the time-of-existence (to be determined), and $\alpha,\beta>0$, $s,l\in \mathbb{R}$. When $\epsilon=0$, QZS is well-known as the classical Zakharov system (ZS), a pair of non-linear PDE developed to model the interaction of Langmuir turbulence waves and ion-acoustic waves.  Here $u(x,t)$ denotes the slowly-varying envelope of electric field, and $n(x,t)$ represents an ion-acoustic wave  that models the density fluctuation of ions \cite{zakharov1972collapse}. A thrust of interest in rigorously studying the quantum effects unexplained by ZS came from the physics community \cite{garcia2005modified}.  There the quantum effect is characterized by a fourth-order perturbation with a quantum parameter $\epsilon>0$ that is non-negligible when either the ion-plasma frequency is high or the electrons temperature is low; for more background in the physics of this model, see \cite{haas2011quantum,simpson2009arrest}.

Our goal is to understand the effect of quantum perturbations, represented by the biharmonic operator.  We will do this in the context of well-posedness theory, thereby extending results of \cite{takaoka1999well}. We show that the biharmonic operator provides an extra degree of smoothing that nullifies the distinction between resonance ($\frac{\beta}{\alpha}\in \mathbb{Z}$) and non-resonance ($\frac{\beta}{\alpha}\notin \mathbb{Z}$), something which played a central role in \cite{takaoka1999well}.  
More precisely, we show that the regions of Sobolev exponent pairs $(s,l)\in\mathbb{R}^2$ yielding well-posedness for $\epsilon=0$ (which depend on $\frac{\beta}{\alpha}\in \mathbb{Z}$ or $\frac{\beta}{\alpha}\notin\mathbb{Z}$), are no longer different when $\epsilon>0$. We apply the Bourgain norm method to show that if ZS is well-posed in a certain Sobolev space of initial data, then so is QZS. Under the condition $s\geq 0$, we show that our application of Bourgain norm method yields a region of Sobolev exponents for the local well-posedness that is sharp up to the boundary. With the more precise statement given in Section \ref{nonlinearestimates}, we state our main result.  We define the region $\Omega_L\subseteq\mathbb{R}^2$ by
\begin{equation}\label{admissible sufficient}
    \Omega_L \coloneqq \left\{s\geq 0,\: -1\leq l <2s+1,\: -2<s-l\leq 2\right\}.
\end{equation}
\begin{theorem}\label{thm1}
Let $\alpha,\beta,\epsilon>0$ and $(s,l)\in \Omega_L$. 
Then for every $(u_0,n_0,n_1)\in H^{s,l}$, there exists $(u,n,\partial_t n) \in C([0,T],H^{s,l})$, a strong solution to (\ref{eq:main}), where $T = T(\lVert u_0\rVert_{H^s}, \lVert n_0\rVert_{H^l},\lVert n_1\rVert_{H^{l-2}})>0$. The solution is unique in the modified Bourgain space $X\subsetneq C([0,T],H^{s,l})$ and the data-to-solution map is Lipschitz continuous from $H^{s,l}$ to $X$.
\end{theorem}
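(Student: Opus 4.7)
The plan is to run the Bourgain space / contraction mapping scheme, reducing Theorem \ref{thm1} to the multilinear estimates promised in Section \ref{nonlinearestimates}.

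First I would recast the wave component in first-order form. With $\omega(\xi)\coloneqq\beta|\xi|\sqrt{1+\epsilon^2\xi^2}$, the symbol $-\beta^{-2}\tau^2+\xi^2+\epsilon^2\xi^4$ factors as $-\beta^{-2}(\tau-\omega(\xi))(\tau+\omega(\xi))$, so setting $n = n_+ + n_-$ with $n_\pm\coloneqq\tfrac12(n \mp i\omega(D)^{-1}\partial_t n)$ splits the wave equation into two Schr\"odinger-like equations $(i\partial_t \mp \omega(D))n_\pm = \pm\tfrac{\beta^2}{2}\omega(D)^{-1}\partial_{xx}(|u|^2)$, with data $n_\pm(0)\in H^l(\mathbb{T})$ since $\omega(D)^{-1}:H^{l-2}\to H^{l-1}\subset H^l$ away from the zero mode (the zero mode of $n$ is driven by a zero-mean source and handled trivially). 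The adapted Bourgain spaces are $X^{s,b}_S$ with weight $\langle\xi\rangle^s\langle\tau-\alpha\xi^2-\epsilon^2\xi^4\rangle^b$ and $X^{l,b}_\pm$ with weight $\langle\xi\rangle^l\langle\tau\pm\omega(\xi)\rangle^b$; the \emph{modified} Bourgain space $X$ of the theorem will include an auxiliary low-modulation norm of the sort pioneered in \cite{takaoka1999well} to secure the embedding into $C_tH^{s,l}$ at endpoint exponents of $\Omega_L$.

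Choosing $\psi\in C_c^\infty(\mathbb{R})$ with $\psi\equiv 1$ near $0$, fixing $b,b'$ with $1/2<b'<b<1$, and denoting the linear propagators by $U(t),V_\pm(t)$, I would define the Picard map on $X^{s,b}_S\times X^{l,b}_+\times X^{l,b}_-$ via
\[
(u,n_\pm)\longmapsto \bigl(\psi(t)U(t)u_0 - i\psi(t/T)\mathcal{D}_S[u(n_++n_-)],\; \psi(t)V_\pm(t)n_\pm(0) \pm i\psi(t/T)\mathcal{D}_\pm[\tfrac{\beta^2}{2}\omega(D)^{-1}\partial_{xx}|u|^2]\bigr),
\]
with $\mathcal{D}_S,\mathcal{D}_\pm$ the respective Duhamel integrals. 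The standard Bourgain linear estimates
\[
\|\psi(t)U(t)\phi\|_{X^{s,b}_S}\lesssim\|\phi\|_{H^s},\qquad \|\psi(t/T)\mathcal{D}_S F\|_{X^{s,b}_S}\lesssim T^{b-b'}\|F\|_{X^{s,b'-1}_S}
\]
(and analogues for $V_\pm$) supply a positive power of $T$. Combining them with the multilinear estimates
\[
\|un\|_{X^{s,b'-1}_S}\lesssim\|u\|_{X^{s,b}_S}\!\sum_\pm\|n_\pm\|_{X^{l,b}_\pm},\qquad \|\omega(D)^{-1}\partial_{xx}(u\bar v)\|_{X^{l,b'-1}_\pm}\lesssim\|u\|_{X^{s,b}_S}\|v\|_{X^{s,b}_S}
\]
to be proven for $(s,l)\in\Omega_L$ in Section \ref{nonlinearestimates}, a routine computation shows that the Picard map is a contraction on a suitable ball in $X$ provided $T = T(\|u_0\|_{H^s},\|n_0\|_{H^l},\|n_1\|_{H^{l-2}})$ is small enough. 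The unique fixed point is the strong solution; the embedding $X^{s,b}\hookrightarrow C_tH^s$ for $b>1/2$ (or the auxiliary norm at the endpoints) gives $(u,n,\partial_t n)\in C([0,T],H^{s,l})$ after reassembling $n=n_++n_-$ and $\partial_t n = i\omega(D)(n_+-n_-)$, and Lipschitz dependence on data follows from the contraction argument.

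The substantive obstacle lies entirely in the multilinear estimates themselves, where one must analyze the resonance function
\[
(\tau-\alpha\xi^2-\epsilon^2\xi^4) - (\tau_1\pm\omega(\xi_1)) - \bigl(\alpha(\xi-\xi_1)^2+\epsilon^2(\xi-\xi_1)^4\bigr)
\]
and exploit the $\epsilon^2\xi^4$ contributions to both dispersion relations to absorb the two derivatives of $\partial_{xx}(|u|^2)$, uniformly across $\beta/\alpha\in\mathbb{Z}$ and $\beta/\alpha\notin\mathbb{Z}$. This quantum smoothing is what dissolves the resonant/non-resonant dichotomy of \cite{takaoka1999well} and extends the admissible region to all of $\Omega_L$; granting those estimates, the scheme above delivers Theorem \ref{thm1}.
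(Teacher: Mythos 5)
Your overall skeleton --- cutoff, Duhamel reformulation, Picard iteration in Bourgain-type spaces, with the bilinear estimates deferred to Section \ref{nonlinearestimates} --- is the same as the paper's proof of Theorem \ref{th:main1}, and the first-order reduction $n=n_++n_-$ is just a repackaging of the paper's wave weight $\langle|\tau|-\beta|k|\langle\epsilon k\rangle\rangle$ (note, though, that $\omega(D)^{-1}$ maps $H^{l-2}\to H^{l}$ on mean-zero functions because $\omega(\xi)\simeq\epsilon\beta\xi^2$ for large $\xi$; your chain ``$H^{l-2}\to H^{l-1}\subset H^l$'' has the inclusion backwards even though the conclusion $n_\pm(0)\in H^l$ is right).

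The genuine gap is your choice $b,b'>\tfrac12$. The region $\Omega_L$ contains the closed boundary pieces $l=-1$ and $s-l=2$, and by the paper's own necessity result (Proposition \ref{p:non-sharp}) the estimate $\lVert un\rVert_{X^{s,b'-1}_S}\lesssim\lVert u\rVert_{X^{s,b}_S}\lVert n\rVert_{X^{l,b}_W}$ with modulation exponent strictly above $\tfrac12$ forces $l\geq 2(b'-1)>-1$ and $s-l\leq -4(b'-1)<2$; so the multilinear inequalities you invoke are simply false at those endpoints, and your contraction cannot reach all of $\Omega_L$. This is why the paper works exactly at $b=\tfrac12$ (Propositions \ref{p:51} and \ref{p:5.2} place the nonlinearity in $X^{s,-1/2}_S$ and its $l^2_kL^1_\tau$ companion), at the price that $X^{s,1/2}_S\not\hookrightarrow C_tH^s$; the augmented spaces $Y^s_S,Y^l_W$ and the dual-type spaces $Z^s_S,Z^l_W$ with the extra $l^2_kL^1_\tau$ components are precisely what restores the continuous embedding and the Duhamel bound, and the small power $T^\theta$ is then extracted via Lemma \ref{l:4.3} with exponents \emph{below} $\tfrac12$ on one factor (Corollary \ref{c:5.1}), not from the standard $b>\tfrac12$ gain you quote (whose $T$-power, as you have written it with $b'<b$, also has the roles of $b,b'$ reversed). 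You gesture at an ``auxiliary low-modulation norm,'' but your iteration is set up in $X^{s,b}_S\times X^{l,b}_\pm$ with $b>\tfrac12$, which is inconsistent with needing it; to close the argument on all of $\Omega_L$ you must run the whole scheme in the endpoint spaces $Y$ and $Z$ as the paper does.
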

In the appendix, we give examples of spacetime functions that illustrate the necessity of the condition
\begin{equation}\label{admissble necessary}
    s\geq -1,\: -1\leq l \leq 2s+1,\: -2\leq s-l\leq 2.  
\end{equation}
for control of the nonlinear terms in the appropriate Bourgain norms.

Although the QZS model is relatively new, the method of multilinear weighted estimates via Fourier transform and the Cauchy-Schwarz inequality has been used successfully by many.  These include (but are not limited to) Bourgain, Kenig-Ponce-Vega, and Ginibre-Tsutsumi-Velo \cite{bourgain1993fourier,kenig1996bilinear,kenig1996quadratic,ginibre1997cauchy}, in applications to various dispersive equations such as KdV, nonlinear Schr\"odinger equation with various nonlinearities, and ZS on $\mathbb{R}^d$. Additionally, Tao \cite{tao2001multilinear} investigated an alternative approach based on orthogonality and dyadic decompositions.

Typically the task of proving boundedness for certain multilinear operators reduces to spacetime Lebesgue-type estimates in Fourier space, which can be a challenge on periodic spatial domains where satisfactory Strichartz estimates are not available. Despite this difficulty, see \cite{erdougan2013smoothing,kishimoto2013local,bourgain1993cauchy} for various applications of Bourgain norm methods to ZS on periodic domains. On $\mathbb{R}^d$, as opposed to the compact case, it is generally expected that there is a wider range of Sobolev exponents for a well-posedness theory, with the full range of Strichartz estimates at one's disposal; for more recent work on QZS on $\mathbb{R}$, see \cite{fang2017local,chen2017low,jiang2014one}.

The QZS defines a Hamiltonian PDE with an energy functional $H$ defined on $H^{2,1}$; see Section \ref{back} for an explicit representation of this. We show, via the conservation law and an energy method, that the local flow obtained from Theorem \ref{thm1} is global whenever initial data are sufficiently regular, with finite energy. 
\begin{theorem}\label{thm2}
If $(u_0,n_0,n_1) \in \Omega_G\subseteq H^{s,l}$ with $\Omega_G\coloneqq\left\{0\leq s-l \leq 2,\:s+l \geq 4\right\}\cup \left\{(2,1)\right\}$, then the local solution obtained from Theorem \ref{thm1} can be extended to a global solution.
\end{theorem}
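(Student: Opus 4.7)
My plan is to split the proof into two cases corresponding to the two pieces of $\Omega_G$, and in each case to combine the local theory of Theorem \ref{thm1} with a suitable a priori bound.

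I would first treat the borderline point $(s,l)=(2,1)$, which is the Hamiltonian regularity level. From the Schr\"odinger equation I get the mass conservation $\|u(t)\|_{L^2}=\|u_0\|_{L^2}$, and from the Hamiltonian structure mentioned in Section \ref{back} I get conservation of the energy $H(u,n,\partial_t n)$, which, modulo the coupling $\int n|u|^2\,dx$, controls $\epsilon^2\|\partial_{xx}u\|_{L^2}^2+\alpha\|\partial_x u\|_{L^2}^2+\|n\|_{L^2}^2+\epsilon^2\|\partial_x n\|_{L^2}^2+\beta^{-2}\|(-\partial_{xx})^{-1/2}\partial_t n\|_{L^2}^2$. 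The step to check is coercivity: using $\|u\|_{L^\infty}\lesssim \|u\|_{H^{1/2+}}$ on $\mathbb{T}$ together with interpolation $\|u\|_{H^1}^2\lesssim \|u\|_{L^2}\|u\|_{H^2}$, the coupling $|\int n|u|^2|\leq \|n\|_{L^2}\|u\|_{L^4}^2$ is absorbed by a small multiple of $\|n\|_{L^2}^2+\epsilon^2\|\partial_{xx}u\|_{L^2}^2$ after applying Young's inequality, with the remaining mass-dependent part absorbed into a Gronwall-type argument. This gives a time-independent a priori bound on $\|u(t)\|_{H^2}+\|n(t)\|_{H^1}+\|\partial_t n(t)\|_{H^{-1}}$, so Theorem \ref{thm1} (with $(s,l)=(2,1)\in\Omega_L$) can be reapplied on successive time intervals of uniform length, giving a global solution.

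For the open region $\{0\leq s-l\leq 2,\, s+l\geq 4\}$, which lies above the energy regularity, I would use persistence of regularity via the standard energy method. Apply $\langle \partial_x\rangle^s$ to the Schr\"odinger equation and $\langle \partial_x\rangle^l$ to the wave equation, pair with $\langle\partial_x\rangle^s u$ and $\beta^{-2}\langle\partial_x\rangle^l\partial_t n$ respectively, and add. The linear terms cancel and one is left with a differential inequality of the form
\begin{equation*}
\tfrac{d}{dt}E_{s,l}(t)\;\lesssim\; \bigl|\langle\langle\partial_x\rangle^s(un),\langle\partial_x\rangle^s u\rangle\bigr|+\bigl|\langle\langle\partial_x\rangle^l\partial_{xx}(|u|^2),\beta^{-2}\langle\partial_x\rangle^l\partial_t n\rangle\bigr|,
\end{equation*}
where $E_{s,l}\sim \|u\|_{H^s}^2+\|n\|_{H^l}^2+\|\partial_t n\|_{H^{l-2}}^2$. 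Using the Kato--Ponce commutator/fractional Leibniz rule on $\mathbb{T}$, these can be controlled by $(\|u\|_{H^s}\|n\|_{H^l}+\|u\|_{H^s}^2\|n\|_{L^\infty}+\|u\|_{L^\infty}\|n\|_{H^l}\|u\|_{H^s})$-type expressions, where all $L^\infty$ and other low-regularity factors are bounded a priori by the $H^{2,1}$ estimate of the first case (using that $s\geq 2$ and $l\geq 1$ on this region, forced by $s+l\geq 4$ and $s-l\leq 2$). The constraint $0\leq s-l\leq 2$ is precisely what permits each factor to be controlled by $E_{s,l}^{1/2}$ without loss. Gronwall then gives $E_{s,l}(t)\leq E_{s,l}(0)\exp(C(\|u_0\|_{H^2},\|n_0\|_{H^1},\|n_1\|_{H^{-1}})\,t)$, and another iteration of Theorem \ref{thm1} extends the flow globally.

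The main obstacle is the second case: one must choose the commutator decomposition so that only the low-regularity factors require $L^\infty$ (or similar) estimates, since those alone admit a priori control from the energy. I expect this to work cleanly for $s,l\in\mathbb{Z}$ by repeated application of $\partial_x$ and direct integration by parts, with the fractional case handled via Kato--Ponce; the constraint $s-l\in[0,2]$ is what makes every produced product term integrable in both slots. Once both cases give a quantitative a priori bound that is finite on every compact time interval, the standard blow-up alternative from Theorem \ref{thm1} forces $T_{\max}=\infty$.
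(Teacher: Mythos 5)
Your treatment of the base case $(s,l)=(2,1)$ matches the paper: mass and energy conservation plus Gagliardo--Nirenberg and Young to absorb $\int n|u|^2$, yielding a global $H^{2,1}$ bound (with a constant that degenerates as $\epsilon\to0$), and then iteration of the local theory. That part is sound, modulo the mean-zero reduction (\ref{variable}) needed to make sense of $\lVert \partial_t n\rVert_{\dot H^{-1}}$.

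The second case has a genuine gap. The symmetric energy method you propose --- applying $\langle\partial_x\rangle^s$ to the Schr\"odinger equation and pairing with $\langle\partial_x\rangle^s u$ --- produces, after cancelling the resonant piece $\mathrm{Im}\langle n\langle\partial_x\rangle^s u,\langle\partial_x\rangle^s u\rangle=0$, a commutator $[\langle\partial_x\rangle^s,n]u$ whose Kato--Ponce bound unavoidably contains the term $\lVert n\rVert_{H^s}\lVert u\rVert_{L^\infty}$. On the region $\{0\le s-l\le 2,\ s+l\ge4\}$ one can have $l=s-2$, so $\lVert n\rVert_{H^s}$ is \emph{not} controlled by $E_{s,l}$; your assertion that ``$0\le s-l\le 2$ is precisely what permits each factor to be controlled by $E_{s,l}^{1/2}$'' is backwards --- $s-l\ge0$ is exactly the regime in which the Leibniz expansion of $\langle\partial_x\rangle^s(un)$ loses up to two derivatives on $n$. (Your wave-equation pairing also has a level mismatch: to track $\lVert\partial_t n\rVert_{H^{l-2}}$ you must pair at order $l-2$, not $l$; that is cosmetic, but the Schr\"odinger loss is not.) The paper's proof avoids this entirely by a different mechanism: it differentiates the Schr\"odinger equation in time, closes a Gronwall estimate for $\lVert\partial_t u\rVert_{H^{s-4}}$ (the products $\partial_t u\cdot n$ and $u\cdot\partial_t n$ at order $s-4\le l-2$ only need the \emph{available} regularity of $n$ and $\partial_t n$, via Lemma \ref{sobinequality}), and then recovers $\lVert u\rVert_{H^s}$ from the fourth-order elliptic relation $\langle\epsilon\nabla\rangle^{2}\Delta u=-i\partial_t u+un$, which gains exactly the four derivatives needed to offset the gap $s-l\le2$ plus the two derivatives in the wave source. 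This is run as an induction in increments of $l_0=s-l$ (with a separate argument for $l_0=0$), and it is precisely where the quantum perturbation enters; your scheme, which never uses the biharmonic smoothing, would not close for $s>l$.
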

Here the difficulty is proving persistence of regularity, given that any initial data with a finite energy has a global solution in $H^{2,1}$, for which we derive an explicit growth rate of Sobolev norms. While our energy method for QZS provides an exponential bound on growth in time, see \cite{erdougan2013smoothing} for results on polynomial growth rates for the classical ZS on $\mathbb{T}$. 

We expect, however, that the above QZS local flow can be uniquely extended to a global flow, from scaling-invariance perspectives suggested in \cite{ginibre1997cauchy}, and provide here a heuristic argument for this. Assuming for the moment that  $\alpha=\beta=\epsilon=1$, suppose the long-time behavior of the solution is governed by the simplified system  
\begin{equation}
      \begin{cases}
    (i\partial_t  - \Delta^2)u = un,\\
    (\partial_{tt} +  \Delta^2)n = \Delta(|u|^2).
    \end{cases}
\end{equation}
Assuming further that both $n(x,t),\partial_t n(x,t)$ have mean zero for all $t\in\mathbb{R}$, consider a change of variable $N_{\pm} = n \pm i\Delta^{-1}\partial_t n$, under which from the previous equation yields
\begin{equation}\label{scale}
      \begin{cases}
    (i\partial_t  - \Delta^2)u = u\frac{N_+ + N_-}{2},\\
    (i\partial_t \mp \Delta)N_{\pm} = \mp |u|^2.
    \end{cases}
\end{equation}
If we add the assumption, as in \cite{ginibre1997cauchy}, that the higher order biharmonic operator dominates the scaling property of $N_{\pm}$, we can neglect the $\mp\Delta$ in the $N_{\pm}$ equation.  Then (\ref{scale}) is scale-invariant under
\begin{equation}
    u_\lambda (x,t) = \lambda^4 u(\lambda x, \lambda^4 t);\: n_\lambda (x,t)=\lambda^4 n(\lambda x, \lambda^4 t),
\end{equation}
and hence the pair of critical Sobolev exponents is
\begin{equation}
    (s_c,l_c) = (\frac{d}{2}-4,\frac{d}{2}-4).
\end{equation}
When $d=1$, $(s_c,l_c)$ is strictly below the region of well-posedness (see (\ref{admissible sufficient}), (\ref{admissble necessary})), and hence we expect any QZS local solution to be global. Moreover we remark that Theorem \ref{thm2} includes the case $\left\{s-l=0\right\}$, which is relevant from the scaling perspective; recall from \cite{ginibre1997cauchy} that for ZS, we have $(s_c,l_c) = (\frac{d}{2}-\frac{3}{2},\frac{d}{2}-2)$.

In the last part of the paper, we consider the semi-classical limit of QZS to ZS as $\epsilon\rightarrow 0$. Under the $\epsilon$-perturbation,  we expect the qualitative behavior of solutions to differ from that of the unperturbed system, and hence singular perturbation theory lies at the core of the analysis of QZS. As is well known, similar issues arise in the WKB method, multiscale analysis, and boundary layer theory; see \cite{deville2008analysis} for an application of singular perturbation theory to ODE in the context of renormalization group and normal form method. Here we extend the results of Guo-Zhang-Guo \cite{guo2013global} to show that the solutions behave continuously as $\epsilon\rightarrow 0$ on a compact time interval. Although their work is on $\mathbb{R}^d$ and for integer Sobolev exponents, an analogue of their argument works on $\mathbb{T}$ as well, and extends to non-integer exponents.  On the other hand, we provide a simple example that illustrates that the biharmonic operator $\epsilon^2 \Delta^2$, for any $\epsilon>0$, is a singular perturbation on an infinite time interval. Here we address a subtlety based on the fact that QZS generates a flow on $H^{s,l}$ whereas the classical ZS does so on $H_0^{s,l} \coloneqq H^s(\mathbb{T}) \times H^l (\mathbb{T}) \times H^{l-1}(\mathbb{T})$. To overcome this apparent \textit{discontinuity} of solution space, we need to uniformly bound the solution in various norms, with bounds  independent of $\epsilon>0$.

We briefly outline the organization of the paper. In Section \ref{back}, we introduce important notations and invoke the Lagrangian formalism of (\ref{eq:main}). In Section \ref{linearestimates}, we summarize a set of linear estimates that are used throughout the paper. In Section \ref{nonlinearestimates}, nonlinear estimates are proved and applied to yield local well-posedness of (\ref{eq:main}); in particular, we prove the more precise statement of theorem \ref{thm1}. In Section \ref{gwp}, we extend local solutions to global solutions for a fixed $\epsilon>0$ and consider the $\epsilon\rightarrow 0$ problem. Throughout the paper, $\alpha,\beta>0$ are fixed and the adiabatic limit $\beta\rightarrow \infty$ is not considered.
\section{Background.}\label{back}

As is conventional, we first define Fourier transform of $f \in L^2(\mathbb{T})$ and the inverse transform of $F \in l^2(\mathbb{Z})$:
\begin{align}
    \hat{f}(k) = \frac{1}{2\pi}\int f(x)e^{-ikx}dx;\:\Check{F}(x) &= \sum_{k \in \mathbb{Z}} F(k)e^{ikx}.
\end{align}

We use $\langle x \rangle = (1+x^2)^{\frac{1}{2}}$ and define a family of Sobolev spaces $W^{s,p}, \dot{W^{s,p}}$ (inhomogeneous and homogeneous, respectively) with $s\in \mathbb{R}, p \in (1,\infty)$ as
\begin{equation}
    \lVert f \rVert_{W^{s,p}} = \lVert \langle \nabla \rangle^s f \rVert_{L^p};\:\lVert f \rVert_{\dot{W}^{s,p}} = \lVert | k |^s \hat{f}(k)\rVert_{L^p},
\end{equation}
and of particular importance is when $p=2$ for which we write $H^s, \dot{H}^s$ as is usual, and the norms are defined via Fourier multipliers:
\begin{equation}
    \lVert f \rVert_{H^s} = \lVert \langle k \rangle^s \hat{f}(k)\rVert_{L^2};\: \lVert f \rVert_{\dot{H}^s} = \lVert | k |^s \hat{f}(k)\rVert_{L^2}.
\end{equation}
Whenever we take the direct sum of normed spaces, we will define the product norm to be the sum of the components, for instance, $\lVert (u_0,n_0,n_1)\rVert_{H^{s,l}}=\lVert u_0 \rVert_{H^s}+ \lVert n_0 \rVert_{H^l}+\lVert n_1 \rVert_{H^{l-2}}$.

As a consequence of the invariance of (\ref{eq:main}) under $u(x,t)\mapsto e^{i\theta}u(x,t)$ and time-translation, mass and energy are conserved:
\begin{align}\label{conservation}
    M[u,n,\partial_t n](t) &= \lVert u \rVert_{L^2}^2 = \lVert u_0 \rVert_{L^2}^2\nonumber\\
    H[u,n,\partial_t n](t) &= \alpha \lVert \partial_x u \rVert_{L^2}^2 + \epsilon^2 \lVert \partial_{xx}u \rVert_{L^2}^2 + \frac{1}{2}\Big(\lVert n \rVert_{L^2}^2 + \frac{1}{\beta^2}\lVert \partial_t n \rVert_{\dot{H}^{-1}}^2 + \epsilon^2 \lVert\partial_x n \rVert_{L^2}^2\Big) + \int n |u|^2.
\end{align}

We can assume that $n_0,n_1$ have zero means. If $\int n_0, \int n_1 \neq 0$, then we can consider the change of variable
\begin{align}\label{variable}
    u(x,t)\mapsto e^{i \Big(\frac{t^2}{4\pi} \int n_1  + \frac{t}{2\pi} \int n_0 \Big)} u(x,t);\: n(x,t) \mapsto n(x,t) - \frac{t}{2\pi} \int n_1 - \frac{1}{2\pi} \int n_0,
\end{align}
which can be directly checked to satisfy (\ref{eq:main}) with zero means in the new variable. By integrating the second equation of (\ref{eq:main}) over space, one obtains $\frac{d^2}{dt^2}\int_\mathbb{T} n=0$, and therefore the mean zero condition on $n_0,n_1$ allows us to make sense of $\lVert \partial_t n \rVert_{\dot{H}^{-1}}$ in the energy functional. We will use this idea extensively to obtain global solutions.

One expects a Hamiltonian system to have its Lagrangian counterpart via Legendre transform. Define
\begin{equation}\label{lagrangian}
    \mathscr{L} = \frac{i}{2}(\overline{u}\partial_t u - u \partial_t \overline{u}) - \alpha \partial_x u \partial_x \overline{u} - (\partial_x \nu) u \overline{u}+ \frac{1}{2\beta^2}(\partial_t \nu)^2 -\frac{1}{2}(\partial_x \nu)^2 - \epsilon^2 \partial_{xx}u \partial_{xx}\overline{u} - \frac{ \epsilon^2}{2}(\partial_{xx}\nu)^2,
\end{equation}
where $u$ is a complex field, $\overline{u}$, the conjugate field of $u$, and $\nu$, a real field where we impose $n \coloneqq \partial_x \nu$. The action functional $\mathcal{S}$ corresponding to $\mathscr{L}$ is defined in the usual way as follows:
\begin{equation}\label{action}
    S = \int \mathscr{L}(u,\overline{u},\nu,\partial_\mu u, \partial_\mu \overline{u}, \partial_\mu \nu)dxdt,
\end{equation}
where $\partial_\mu$ denotes higher derivatives. To look for the critical points of $\mathcal{S}$, we impose

\begin{equation}
    \delta \mathcal{S}=0,
\end{equation}
which amounts to solving the Euler-Lagrange equations corresponding to the given fields. One can check that the Euler-Lagrange equation for $u$ yields the first equation of (\ref{eq:main}), and the spatial derivative of the Euler-Lagrange equation for $\nu$ yields the second equation of (\ref{eq:main}).

Let $D$ be a Banach space and for $T\in (0,\infty)$, let $C([0,T],D)$ denote the Banach space of $D$-valued continuous (in time) functions with $\lVert u \rVert_{C_T D}\coloneqq\sup\limits_{t\in [0,T]} \lVert u(t)\rVert_D <\infty$. When $T=\infty$, consider $C_{loc}([0,\infty),D)$ where we only require continuity in $t$. We wish to obtain a strong solution $(u,n,\partial_t n)$ to (\ref{eq:main}) and by this we mean $(u,n,\partial_t n) \in C([0,T], H^{s,l})$ for some $T>0$ that satisfies the Duhamel's principle
\begin{align}\label{eq:4.9}
    u(t) &= U_\epsilon(t)u_0 - i \int_0^t U_\epsilon(t-t^\prime) (un)(t^\prime)dt^\prime\\
    n(t) &= \partial_t V_\epsilon(t)n_0 + V_\epsilon(t)n_1+ \beta^2 \int_0^t V_\epsilon(t-t^\prime) \partial_{xx}(|u|^2)(t^\prime)dt^\prime,\nonumber
\end{align}
where $U_\epsilon(t),V_\epsilon(t),\partial_t V_\epsilon(t)$ for $\epsilon\geq 0$ are defined via Fourier multipliers as
\begin{align}\label{linop}
\reallywidehat{U_\epsilon(t)u}(k) &= e^{-it (\alpha k^2 + \epsilon^2 k^4)} \widehat{u}(k),\nonumber\\
\reallywidehat{V_\epsilon(t)n_1}(k)&=\begin{cases}
    \frac{\sin (\beta |k|\langle \epsilon k \rangle t)}{\beta |k|\langle \epsilon k \rangle} \widehat{n}_1(k),& k \neq 0\\
    t\cdot\widehat{n}_1(k),              & k=0,
\end{cases}\nonumber\\
\reallywidehat{\partial_t V_\epsilon(t)n_0}(k) &= \cos (\beta |k|\langle \epsilon k \rangle t) \widehat{n}_0(k).
\end{align}

To obtain low-regularity well-posedness, we define the modified Bourgain norm adapted to the linear operators of interest. Take a complex-valued $f \in C^\infty_c (\mathbb{T}\times \mathbb{R})$ and define

\begin{equation}
    \lVert f \rVert_{X^{s,b}_S} = \lVert \langle k \rangle^s \langle \tau + \alpha k^2 + \epsilon^2 k^4\rangle^b \widehat{f}(k,\tau)\rVert_{L^2_\tau l^2_k};\:\lVert f \rVert_{X^{l,b}_W} = \lVert \langle k \rangle^l \langle |\tau| -\beta |k|\langle \epsilon k \rangle\rangle^b \widehat{f}(k,\tau)\rVert_{L^2_\tau l^2_k},
\end{equation}
from which we define $X^{s,b}_S$ and $X^{l,b}_W$ as the closure of $C^{\infty}_c(\mathbb{T}\times\mathbb{R})$ with respect to the norms introduced above, respectively. We refer to expressions such as $\langle \tau + \alpha k^2 + \epsilon^2 k^4\rangle$ and $\langle |\tau| -\beta |k|\langle \epsilon k \rangle\rangle$ as dispersive weights. As in literature, we refer to these spaces as Bourgain spaces. As usual, $\widehat{f}$ denotes the spacetime Fourier transform

\begin{equation}
    \widehat{f}(k,\tau) = \frac{1}{4\pi^2}\int f(x,t)e^{-i(kx + \tau t)}dxdt,
\end{equation}
and whenever it is clear, we use $\widehat{f}$ to denote either the spatial Fourier transform or the spacetime transform. Although for $b>\frac{1}{2}$, we have
\begin{equation}
X_S^{s,b}\hookrightarrow C(\mathbb{R},H^s),   
\end{equation}
we are interested in the endpoint case $b=\frac{1}{2}$ where the continuous embedding into $C(\mathbb{R},H^s)$ fails. Motivated by the Fourier inversion theorem, we augment the norm and consider
\begin{equation}
    \lVert f \rVert_{Y^s_S}= \lVert f \rVert_{X^{s,\frac{1}{2}}_S}+ \lVert \widehat{f}(k,\tau)\langle k \rangle^s \rVert_{l^2_k L^1_\tau};\:\lVert f \rVert_{Y^l_W} = \lVert f \rVert_{X^{l,\frac{1}{2}}_W}+ \lVert \widehat{f}(k,\tau)\langle k \rangle^s \rVert_{l^2_k L^1_\tau},
\end{equation}
from which we can recover the desired continuous embedding, that is,
\begin{equation}
    Y_S^s \hookrightarrow C(\mathbb{R},H^s),
\end{equation}
and similarly for $Y_W^l$. To control the Duhamel term coming from the nonlinearities, we consider the companion spaces to $Y_S^s, Y_W^l$:
\begin{align}
    \lVert f \rVert_{Z^s_S} = \lVert f \rVert_{X^{s,-\frac{1}{2}}_S} + \left|\left| \frac{\langle k \rangle^s}{\langle \tau+\alpha k^2 + \epsilon^2 k^4\rangle}  \widehat{f}(k,\tau) \right|\right|_{l^2_k L^1_\tau};\:\lVert f \rVert_{Z^l_W} = \lVert f \rVert_{X^{l,-\frac{1}{2}}_W} + \left|\left| \frac{\langle k \rangle^l}{\langle |\tau| -\beta |k|\langle \epsilon k \rangle\rangle}  \widehat{f}(k,\tau) \right|\right|_{l^2_k L^1_\tau}.
\end{align}

To obtain solutions for small time, we further define the time-restricted space for $T>0$
\begin{equation}
    \lVert f \rVert_{X_{S,T}^{s,b}} = \inf_{\Tilde{f}=f,\:t\in [0,T]}\lVert \Tilde{f}\rVert_{X_{S}^{s,b}},
\end{equation}
where such restriction for other Bourgain spaces can be defined analogously. 

We say $A \lesssim B$ or $A \gtrsim B$ if there exists some $C>0$ such that $A \leq C B$ or $A \geq CB$, and $A \simeq B$ if $A \lesssim B$ and $A \gtrsim B$. Given $A_{\pm}$, we denote $\sum_{\pm}A_{\pm}\coloneqq A_+ + A_-$. We let $\psi \in C_c^\infty (\mathbb{R})$ to be a smooth cutoff with a compact support in $[-2,2]$ and $\psi(t)=1$ for all $t\in [-1,1]$. For $b\in\mathbb{R}$, we write $b\pm$ to denote $b\pm\epsilon^\prime$ for some universal $\epsilon^\prime\ll 1$. Though not necessary, we assume the perturbation parameter $\epsilon\leq 1$ to make the exposition clearer.
\section{Linear estimates.}\label{linearestimates}
Here we assume $\alpha,\beta,\epsilon>0$.

\begin{lemma}[Homogeneous Estimates]\label{linest1}
For $s,l \in \mathbb{R}$,
\begin{align*}
\lVert U_\epsilon (t) u_0 \rVert_{H^s}=\lVert u_0 \rVert_{H^s}&;\:\lVert \psi(t) U_\epsilon(t) u_0\rVert_{Y^s_S} \leq c_1(\psi ) \lVert u_0 \rVert_{H^s},\forall \epsilon \geq 0.\\ 
\lVert \partial_t V_\epsilon (t) n_0 \rVert_{H^l}\leq\lVert n_0 \rVert_{H^l} &;\:\lVert \psi(t) \partial_t V_\epsilon(t) n_0\rVert_{Y^l_W}\leq c_2(\psi ) \lVert n_0 \rVert_{H^l},\forall \epsilon \geq 0.\\
\lVert V_\epsilon (t) n_1 \rVert_{H^l}\leq c\Big(t + \frac{1}{\beta \epsilon}\Big)\lVert n_1 \rVert_{H^{l-2}}&;\:\lVert \psi(t) V_\epsilon(t)n_1\rVert_{Y^l_W}\leq c_3(\psi)\Big(1+\frac{1}{\beta \epsilon}\Big)\lVert n_1 \rVert_{H^{l-2}},\\
\lVert V_0 (t) n_1 \rVert_{H^l}\leq c\Big(t + \frac{1}{\beta }\Big)\lVert n_1 \rVert_{H^{l-1}}&;\: \lVert \psi(t) V_0(t)n_1\rVert_{Y^l_W}\leq c_4(\psi)\Big(1+\frac{1}{\beta}\Big)\lVert n_1 \rVert_{H^{l-1}}.
\end{align*}
\end{lemma}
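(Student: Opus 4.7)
The plan is to treat every bound through the spacetime Fourier side, exploiting the fact that each propagator's phase exactly matches the corresponding Bourgain dispersive weight, while the Schwartz cutoff $\psi$ produces rapid decay after a translation. The work splits into three tasks: the Schr\"odinger-type estimate for $U_\epsilon$, the wave-type estimates for $\partial_t V_\epsilon$ and $V_\epsilon$, and the extraction of the scalar $(\beta|k|\langle\epsilon k\rangle)^{-1}$ against the gap $H^l\to H^{l-2}$.

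For $U_\epsilon(t)u_0$, Plancherel gives the $H^s$ isometry immediately. For the Bourgain norm, the spacetime Fourier transform of $\psi(t)U_\epsilon(t)u_0$ factorizes as $\widehat{\psi}(\tau+\alpha k^2+\epsilon^2 k^4)\widehat{u}_0(k)$; the substitution $\tau\mapsto\tau-\alpha k^2-\epsilon^2 k^4$ turns the dispersive weight into $\langle\tau\rangle$, and Schwartz decay of $\widehat{\psi}$ controls both the $L^2_\tau$ and $L^1_\tau$ norms by a finite constant times $\lVert u_0\rVert_{H^s}$. The wave estimates follow the same template after decomposing $\cos(\beta|k|\langle\epsilon k\rangle t)=\tfrac12\sum_{\pm}e^{\pm i\beta|k|\langle\epsilon k\rangle t}$ and the analogous expansion of $\sin$, so that the spacetime Fourier transforms of $\psi(t)\partial_t V_\epsilon(t)n_0$ and $\psi(t)V_\epsilon(t)n_1$ become sums of terms $\widehat{\psi}(\tau\mp\beta|k|\langle\epsilon k\rangle)\widehat{n}_j(k)$ (modulo the scalar treated below). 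The reverse triangle inequality $\langle|\tau|-\beta|k|\langle\epsilon k\rangle\rangle\leq\langle\tau\mp\beta|k|\langle\epsilon k\rangle\rangle$, valid because $\beta|k|\langle\epsilon k\rangle\geq 0$, reduces these to the same change-of-variables argument as for $U_\epsilon$.

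The $V_\epsilon(t)n_1$ estimate introduces the extra scalar $(\beta|k|\langle\epsilon k\rangle)^{-1}$ for $k\neq 0$, which must be absorbed against $\langle k\rangle^2$ to close the gap between $H^l$ and $H^{l-2}$. The key elementary estimate is
\begin{equation*}
\sup_{k\in\mathbb{Z}\setminus\{0\}}\frac{\langle k\rangle^2}{|k|\langle\epsilon k\rangle}\lesssim \frac{1}{\epsilon},
\end{equation*}
proved by splitting into $\epsilon|k|\leq 1$ (where $\langle\epsilon k\rangle\simeq 1$ and $|k|\leq 1/\epsilon$) and $\epsilon|k|\geq 1$ (where $\langle\epsilon k\rangle\simeq\epsilon|k|$ and $\langle k\rangle\simeq|k|$), yielding the factor $\tfrac{1}{\beta\epsilon}\lVert n_1\rVert_{H^{l-2}}$. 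The $V_0$ estimate is analogous with $\sup_{k\neq 0}\langle k\rangle/|k|\lesssim 1$, losing only one derivative and producing $\tfrac{1}{\beta}\lVert n_1\rVert_{H^{l-1}}$. The $k=0$ mode must be handled separately: it contributes the spatially constant function $t\widehat{n}_1(0)$, whose $H^l$ norm $t|\widehat{n}_1(0)|\leq t\lVert n_1\rVert_{H^{l-2}}$ supplies the additive $t$, and whose Bourgain contribution comes with the Schwartz factor coming from $t\psi(t)$ to produce the additive $1$.

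The main obstacle is the case analysis for the $V_\epsilon$ scalar, where the crossover between $\langle\epsilon k\rangle\simeq 1$ at low frequencies and $\langle\epsilon k\rangle\simeq\epsilon|k|$ at high frequencies is precisely what forces the $\epsilon^{-1}$-singular constant; tracking the $k=0$ mode separately is essential because $H^{l-2}$ need not dominate the $L^2$ norm when $l<2$. Everything else reduces to routine exponential-multiplier manipulations on the Bourgain spaces.
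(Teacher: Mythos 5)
Your proof is correct and follows essentially the same route as the paper, which simply cites the standard factorization argument of \cite[Lemma 2.8]{tao2006nonlinear} (writing the spacetime Fourier transform of $\psi(t)$ times the propagator as a translate of $\widehat{\psi}$) and declares the remaining inequalities "similar." You have merely supplied the details the paper leaves implicit — the reverse triangle inequality for the wave weight, the frequency-split bound $\sup_{k\neq 0}\langle k\rangle^2/(|k|\langle\epsilon k\rangle)\lesssim \epsilon^{-1}$, and the separate treatment of the $k=0$ mode — all of which are accurate.
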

\begin{proof}
The first line of inequalities follows from the unitarity of Schr\"odinger operator; see \cite[Lemma 2.8]{tao2006nonlinear}. A similar argument can be used to show the other inequalities.
\end{proof}

\begin{lemma}[Duhamel Estimates]
For $s,l\in\mathbb{R}$ and $\rho \in [0,1]$,
\begin{align*}
     \left|\left| \psi(t)\int_0^t U_\epsilon(t-t^\prime) F(t^\prime) dt^\prime \right|\right|_{Y^s_S} &\leq c_1(\psi) \lVert F \rVert_{Z^s_S},\\
     \left|\left| \psi(t)\int_0^t V_\epsilon(t-t^\prime) |\nabla|^{2-\rho}F(t^\prime) dt^\prime \right|\right|_{Y^l_W} &\leq c_2(\psi)c_2(\rho,\beta,\epsilon) \lVert F \rVert_{Z^l_W},\\
    \left|\left| \psi(t)\int_0^t \partial_t V_\epsilon(t-t^\prime) D^{2-\rho}F(t^\prime) dt^\prime \right|\right|_{Y^{l-2}_W} &\leq c_3(\psi) \lVert F \rVert_{Z^l_W}.
\end{align*}
\end{lemma}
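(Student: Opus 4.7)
My overall strategy is to reduce each of the three Duhamel estimates to the standard Bourgain-space machinery adapted to the appropriate phase function. Writing $\widehat{F}(k,\tau)$ for the spacetime Fourier transform and inserting it into each Duhamel integral, one obtains the identity
\begin{equation}
\int_0^t e^{-i(t-t')\phi(k)} F(k,t')\, dt' = \int \widehat{F}(k,\tau) \frac{e^{i\tau t} - e^{-it\phi(k)}}{i(\tau + \phi(k))}\, d\tau,
\end{equation}
so that premultiplying by $\psi(t)$ splits each Duhamel expression into a forcing piece proportional to $\psi(t)e^{i\tau t}$ and a free piece proportional to $\psi(t)e^{-it\phi(k)}$. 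I would then partition $\widehat{F}$ into the low-modulation region $\{|\tau+\phi(k)|\leq 1\}$ and its complement: in the former, Taylor-expand the quotient $(e^{it(\tau+\phi)}-1)/(\tau+\phi)$ to cancel the small denominator and transfer smoothness from $\psi$; in the latter, the factor $1/|\tau+\phi|$ is absorbed directly into the dispersive weight defining $X^{s,b}$. This produces both the $X^{s,1/2}$ and the $l^2_k L^1_\tau$ components of the $Y$-norm from the corresponding components of the $Z$-norm. For the first estimate, the argument applies verbatim with $\phi(k) = \alpha k^2 + \epsilon^2 k^4$, since the Bourgain weight $\langle \tau + \phi(k)\rangle$ built into $X^{s,b}_S$ was precisely chosen for this phase and no explicit property of $\phi$ is used beyond its being the symbol of $U_\epsilon$; the resulting constant depends only on $\psi$.

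The second estimate requires the extra step of decomposing $\sin(\beta|k|\langle\epsilon k\rangle(t-t'))$ into its two complex exponentials, reducing it to the framework above with phases $\phi_\pm(k) = \pm\beta|k|\langle\epsilon k\rangle$. The smoothing multiplier produced by combining $V_\epsilon$ with $|\nabla|^{2-\rho}$ is
\begin{equation}
m_\rho(k) \coloneqq \frac{|k|^{2-\rho}}{\beta|k|\langle\epsilon k\rangle} = \frac{|k|^{1-\rho}}{\beta\langle \epsilon k\rangle},\quad k\in\mathbb{Z}\setminus\{0\},
\end{equation}
whose $l^\infty_k$ norm for $\rho\in[0,1]$ is bounded by a constant times $\max(1/\beta,\,1/(\beta\epsilon^{1-\rho}))$, obtained by splitting on the regimes $|\epsilon k|\lesssim 1$ and $|\epsilon k|\gtrsim 1$; this supplies the stated constant $c_2(\rho,\beta,\epsilon)$. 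The $k=0$ mode, where $V_\epsilon$ acts as multiplication by $t$, contributes nothing after $|\nabla|^{2-\rho}$ since $|k|^{2-\rho}$ vanishes there, so no singular term survives. Finally, since $\lvert\tau\mp\beta|k|\langle\epsilon k\rangle\rvert\geq \lvert|\tau|-\beta|k|\langle\epsilon k\rangle\rvert$ for a suitable choice of sign, each $\pm$ modulation weight is controlled by the weight $\langle |\tau|-\beta|k|\langle\epsilon k\rangle\rangle$ defining $X^{l,b}_W$, and the generic machinery above closes the estimate.

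For the third estimate, $\partial_t V_\epsilon$ has symbol $\cos(\beta|k|\langle\epsilon k\rangle t)$ with no singular prefactor, so after the $\pm$ decomposition the composite multiplier is simply $|k|^{2-\rho}$; this growth is exactly compensated by the two-derivative loss inherent in going from input space $Z^l_W$ to output space $Y^{l-2}_W$ for any $\rho\in[0,1]$, producing a universal constant. The main obstacle across the three estimates, and the point that requires most care, is the precise $\epsilon$-tracking in the second estimate: the constant $c_2(\rho,\beta,\epsilon)$ must correctly reflect the true degeneration of the $V_\epsilon$ smoothing as $\epsilon\to 0$, because this is exactly what enters the semi-classical limit analysis of Section \ref{gwp}, and any unnecessary $\epsilon$-loss incurred here would propagate into suboptimal bounds there.
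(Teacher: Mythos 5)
Your argument is correct and is essentially the standard Duhamel estimate in Bourgain spaces (the content of \cite[Proposition 2.12]{tao2006nonlinear}) that the paper itself invokes, adapted to the phases $\alpha k^2+\epsilon^2k^4$ and $\pm\beta|k|\langle\epsilon k\rangle$ exactly as the paper intends. Your constant $\max\bigl(\beta^{-1},\beta^{-1}\epsilon^{-(1-\rho)}\bigr)$ from maximizing $|k|^{1-\rho}/(\beta\langle\epsilon k\rangle)$ agrees with the paper's $c_2(\rho,\beta,\epsilon)=\frac{\rho^{1/2}}{\beta}\bigl(\frac{1-\rho}{\rho\epsilon^2}\bigr)^{(1-\rho)/2}$ up to an absolute factor, so the $\epsilon$-dependence relevant to the semi-classical limit is captured correctly.
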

\begin{proof}
The first inequality is standard in literature; see \cite[Proposition 2.12]{tao2006nonlinear}. The second and third are proved similarly where 
    \begin{center}
        $c_2(\rho,\beta,\epsilon) = \begin{cases}
    \frac{\Big(\frac{1-\rho}{\rho \epsilon^2}\Big)^{\frac{1-\rho}{2}}}{\beta \rho^{-1/2}},& \rho \in (0,1)\\
    \frac{1}{\beta \epsilon},              & \rho=0\\
    \frac{1}{\beta}, & \rho=1.
\end{cases}$
    \end{center}
\end{proof}

The following estimates allow us to extract a (small) positive time factor, which is applied to obtain local well-posedness.

\begin{lemma}\label{l:4.3}
Let $T\leq 1$, $s,l \in \mathbb{R}$, and $-\frac{1}{2}<b \leq b^\prime < \frac{1}{2}$. Then
\begin{align*}
    \lVert \psi(t/T)u\rVert_{X^{s,b}_S}&\lesssim_{\psi,b,b^\prime} T^{b^\prime-b} \lVert u \rVert_{X^{s,b^\prime}_S},\\
    \lVert \psi(t/T)u\rVert_{X^{l,b}_W}&\lesssim_{\psi,b,b^\prime} T^{b^\prime-b} \lVert u \rVert_{X^{l,b^\prime}_W}.
\end{align*}
\end{lemma}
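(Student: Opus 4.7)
The plan is to reduce the Bourgain-norm inequality to a one-variable scalar Sobolev inequality in time, and then prove that scalar inequality by complex interpolation between two endpoints. The reduction exploits the unitary group structure in the definition of the Bourgain spaces, while the scalar endpoints are handled by a standard fractional product estimate and a H\"older/Sobolev embedding argument.

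For the reduction, a direct Fourier computation shows that conjugation by $U_\epsilon(-t)$ is an isometric isomorphism $X^{s,b}_S\to H^{b}_t(\mathbb{R};H^s_x(\mathbb{T}))$; since $U_\epsilon(-t)$ is a spatial Fourier multiplier, it commutes with multiplication by $\psi(t/T)$. Fubini/Plancherel on the spatial Fourier series then reduces the $X^{s,b}_S$ inequality to the scalar claim
\begin{equation}\label{scalar-claim}
\lVert\psi(t/T)\,g\rVert_{H^{b}_t(\mathbb{R})}\;\lesssim\; T^{b'-b}\,\lVert g\rVert_{H^{b'}_t(\mathbb{R})},\qquad -\tfrac{1}{2}<b\le b'<\tfrac{1}{2},\ T\le 1,
\end{equation}
for scalar $g\colon\mathbb{R}\to\mathbb{C}$. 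The $X^{l,b}_W$ case reduces to the same claim after splitting $u=P_+u+P_-u$ into positive- and negative-temporal-frequency pieces: on each piece, the weight $\langle||\tau|-\omega_k|\rangle$ (with $\omega_k\coloneqq\beta|k|\langle\epsilon k\rangle$) collapses to $\langle\tau\mp\omega_k\rangle$, which is converted to $\langle\tau\rangle$ by the spatial-Fourier-multiplier modulation $e^{\mp i\omega_k t}$, a unitary that commutes with $\psi(t/T)$. The non-commutativity of $P_\pm$ with $\psi(t/T)$ is controlled by boundedness of $P_\pm$ on $H^{b}_t(\mathbb{R})$ for $|b|<\tfrac{1}{2}$.

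For the scalar claim (\ref{scalar-claim}), I would complex-interpolate between two endpoints. The first is $b=b'$: multiplication by $\psi(\cdot/T)$ is bounded on $H^{b'}_t(\mathbb{R})$ uniformly in $T\le 1$ whenever $|b'|<\tfrac{1}{2}$, the classical fractional product estimate on Sobolev spaces of order below $\tfrac{1}{2}$, with constant depending only on $\lVert\psi\rVert_{L^\infty}$ and not on $T$. The second is $b=0$ with $0<b'<\tfrac{1}{2}$: by H\"older's inequality with conjugate exponents $q=1/b'$ and $p=2/(1-2b')$,
\[
\lVert\psi(t/T)\,g\rVert_{L^2_t}\;\le\;\lVert\psi(\cdot/T)\rVert_{L^q_t}\,\lVert g\rVert_{L^p_t}\;\lesssim\;T^{b'}\,\lVert g\rVert_{H^{b'}_t},
\]
combining the scaling $\lVert\psi(\cdot/T)\rVert_{L^q_t}=T^{b'}\lVert\psi\rVert_{L^q}$ with the Sobolev embedding $H^{b'}(\mathbb{R})\hookrightarrow L^p(\mathbb{R})$. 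Complex interpolation in $b$ between these two endpoints yields (\ref{scalar-claim}) for $0\le b\le b'<\tfrac{1}{2}$; the remaining range $-\tfrac{1}{2}<b\le b'\le 0$ follows by duality, since multiplication by the real function $\psi(\cdot/T)$ is self-adjoint on $L^2_t$.

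The main obstacle is the sharp use of the strict inequality $b'<\tfrac{1}{2}$, which is precisely the threshold at which the Sobolev embedding $H^{b'}\hookrightarrow L^p$ degenerates; at $b'=\tfrac{1}{2}$ one incurs a logarithmic loss that would spoil the clean $T^{b'-b}$ gain. A secondary technical point is the non-commutativity of $P_\pm$ with $\psi(t/T)$ in the $X^{l,b}_W$ reduction, which I sidestep using $H^b_t$-boundedness of the Hilbert transform in the range $|b|<\tfrac{1}{2}$.
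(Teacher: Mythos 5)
Your overall strategy is the standard one and matches what the paper implicitly relies on: the paper's proof is a one-line citation of \cite[Lemma 2.11]{tao2006nonlinear}, whose argument is exactly your reduction (conjugate by the free group to replace the dispersive weight by $\langle\tau\rangle$, commute the conjugation past the time cutoff, reduce to a scalar estimate $\lVert\psi(t/T)g\rVert_{H^b_t}\lesssim T^{b'-b}\lVert g\rVert_{H^{b'}_t}$). Your treatment of the scalar step differs in detail — you interpolate between the $b=b'$ multiplier bound and the $b=0$ H\"older/Sobolev bound, whereas the cited proof splits $g$ into temporal frequencies above and below $1/T$ — but both are routine and give the same conclusion. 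Your handling of the wave weight via $P_\pm$ and modulation by $e^{\mp i\omega_k t}$ is also the right idea; just be aware that for negative $b$ the pointwise comparison between $\langle\lvert\tau\rvert-\omega_k\rangle^b$ and $\langle\tau\mp\omega_k\rangle^b$ reverses, so after multiplying by $\psi(t/T)$ (which destroys the one-sided frequency support) you should use $\langle\lvert\tau\rvert-\omega_k\rangle=\min_\pm\langle\tau\mp\omega_k\rangle$, hence $\langle\lvert\tau\rvert-\omega_k\rangle^b\le\sum_\pm\langle\tau\mp\omega_k\rangle^b$ for $b\le 0$, rather than a single-sign domination.

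There is one concrete gap in coverage: your interpolation gives $0\le b\le b'<\tfrac12$ and your duality step gives $-\tfrac12<b\le b'\le 0$, but the lemma is stated for all $-\tfrac12<b\le b'<\tfrac12$, which includes the mixed case $b<0<b'$. This case does not follow from either of your two steps as written (note $T^{b'-b}=T^{b'}T^{-b}\le T^{b'}$ for $T\le 1$, so the crude chain $\lVert\cdot\rVert_{H^b}\le\lVert\cdot\rVert_{L^2}\lesssim T^{b'}\lVert g\rVert_{H^{b'}}$ is not strong enough). It is easily repaired: pick $\tilde\psi\in C_c^\infty$ with $\tilde\psi=1$ on $\operatorname{supp}\psi$, write $\psi(t/T)=\tilde\psi(t/T)\psi(t/T)$, and compose the two one-signed estimates through $L^2$, i.e. $\lVert\tilde\psi(t/T)\psi(t/T)g\rVert_{H^b}\lesssim T^{-b}\lVert\psi(t/T)g\rVert_{L^2}\lesssim T^{-b}T^{b'}\lVert g\rVert_{H^{b'}}$. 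Separately, a minor imprecision: uniform boundedness of multiplication by $\psi(\cdot/T)$ on $H^{b'}$ for $\lvert b'\rvert<\tfrac12$ is true, but the constant cannot depend only on $\lVert\psi\rVert_{L^\infty}$ (generic bounded functions are not $H^{b'}$-multipliers for $b'\ne 0$); one needs e.g. the uniform interval-multiplier theorem together with a layer-cake decomposition of $\psi$, so the constant depends on $\psi$ through more than its sup norm.
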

\begin{proof}
The first inequality follows from \cite[Lemma 2.11]{tao2006nonlinear}. A similar argument can be used to show the second inequality.
\end{proof}

\section{Nonlinear estimates.}\label{nonlinearestimates}

Here we fix $\alpha,\beta,\epsilon>0$ and $T \in (0,1]$.
\begin{proposition}\label{p:51}
For $0<\rho\leq 1$, suppose $s\geq 0,\:-1\leq l \leq 2s+1-\rho,\:-2+\rho \leq s-l\leq 2$ and $b\in (\frac{1}{6},\frac{1}{2}]$. Then there exists $C=C(\alpha,\beta,\epsilon,\rho,s,l,b)>0$ such that
\begin{align*}
    \lVert un \rVert_{X^{s,-\frac{1}{2}}_S} &\leq C (\lVert u \rVert_{X^{s,b}_S}\lVert n \rVert_{X^{l,\frac{1}{2}}_W}+\lVert u \rVert_{X^{s,\frac{1}{2}}_S}\lVert n \rVert_{X^{l,b}_W}),\\
    \lVert D^\rho(u \overline{v})\rVert_{X^{l,-\frac{1}{2}}_W}&\leq C( \lVert u \rVert_{X^{s,b}_S}\lVert v \rVert_{X^{s,\frac{1}{2}}_S}+\lVert u \rVert_{X^{s,\frac{1}{2}}_S}\lVert v \rVert_{X^{s,b}_S}).
\end{align*}
\end{proposition}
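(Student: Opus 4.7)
The plan is to prove both bilinear estimates in parallel by duality, reducing each to a trilinear Fourier-side bound handled via the quantum-enhanced resonance identity, dyadic decomposition, and Cauchy-Schwarz. Dualizing the first inequality, it suffices to prove that
\[
I \;\coloneqq\; \sum_{k=k_1+k_2}\!\int_{\tau=\tau_1+\tau_2}\! \frac{\langle k\rangle^s\, U(k_1,\tau_1)\,N(k_2,\tau_2)\,G(k,\tau)}{\langle k_1\rangle^s \langle k_2\rangle^l \,\langle\sigma\rangle^{1/2}\,\langle\sigma_1\rangle^{b}\,\langle\sigma_2\rangle^{1/2}}
\]
is bounded by $\|U\|_{l^2 L^2}\|N\|_{l^2 L^2}\|G\|_{l^2 L^2}$ (plus the analogous bound with $b$ and $1/2$ exchanged on $\sigma_1,\sigma_2$), where $\sigma,\sigma_1,\sigma_2$ denote the Schr\"odinger, Schr\"odinger, and wave modulation symbols at $(k,\tau),(k_1,\tau_1),(k_2,\tau_2)$ and $U,N,G\geq 0$ are the normalized Fourier transforms. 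The second estimate reduces identically after noting $\widehat{\bar v}(k,\tau)=\overline{\widehat{v}(-k,-\tau)}$, which only reflects signs and preserves modulation magnitudes; the only change is the prefactor $\langle k\rangle^l |k|^\rho/(\langle k_1\rangle^s\langle k_2\rangle^s)$. By symmetry it suffices to treat $\tau_2\geq 0$.

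The central ingredient is the quantum-corrected resonance identity
\[
\sigma - \sigma_1 - \sigma_2 \;=\; \alpha(k^2-k_1^2) \,+\, \epsilon^2(k^4-k_1^4) \,+\, \beta|k_2|\langle\epsilon k_2\rangle,
\]
valid on $k=k_1+k_2,\ \tau=\tau_1+\tau_2,\ \tau_2\geq 0$. Factoring the biharmonic contribution as $k^4 - k_1^4 = k_2(k+k_1)(k^2+k_1^2)$ yields
\[
\max(\langle\sigma\rangle,\langle\sigma_1\rangle,\langle\sigma_2\rangle) \;\gtrsim\; \epsilon^2|k_2||k+k_1|(k^2+k_1^2) + \alpha|k_2||k+k_1| + \beta|k_2|\langle\epsilon k_2\rangle.
\]
This lower bound is the decisive gain over the classical Zakharov case: a factor of order $\epsilon^2 k_{\max}^3$ is uniformly available in every frequency regime (outside the small null set $k+k_1=0$, which is handled separately), and it is precisely this extra smoothing that erases the resonant vs.\ non-resonant distinction of \cite{takaoka1999well}.

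From here I would run a dyadic decomposition in the spatial frequencies $|k_1|\sim N_1,|k_2|\sim N_2,|k|\sim N$ and in the modulations $\langle\sigma_j\rangle\sim L_j$, partitioning $I$ into three cases according to which of $L,L_1,L_2$ is maximal. In each case I would transfer a fraction $L_{\max}^{1/2-}$ to absorb the algebraic prefactor and then apply Cauchy-Schwarz; for example, in the $L=L_{\max}$ case, fixing $(k,\tau)$ and summing over $(k_1,\tau_1)$, the $\tau_1$-integral restricted to $\{\sigma_2\sim L_2\}$ has length $\lesssim L_2$ and the $k_1$-sum is constrained through the resonance identity, so Cauchy-Schwarz produces only a logarithmic loss provided $b>1/6$. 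The hypotheses defining $\Omega_L$ enter precisely here: $l\leq 2s+1-\rho$ and $|s-l|\leq 2$ govern how much of the $\epsilon^2$-smoothing must be traded against the algebraic prefactor $\langle k\rangle^s\langle k_1\rangle^{-s}\langle k_2\rangle^{-l}$ (respectively $\langle k\rangle^l|k|^\rho\langle k_1\rangle^{-s}\langle k_2\rangle^{-s}$ in the second estimate), while $l\geq -1$ prevents a low-frequency singularity in $\langle k_2\rangle^{-l}$.

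The main obstacle I anticipate is the high-high regime $N_1\sim N_2\gg N$ with the wave modulation dominating, so $L_2\gtrsim \epsilon^2 N_2^4$ while the prefactor is of order $N^s N_2^{-s-l}$; the balance between these two competing powers is tight. Closing this case appears to require the sharp counting bound $\#\{k_1:|\alpha(k_1^2-k_2^2)+\epsilon^2(k_1^4-k_2^4)|\lesssim L\}\lesssim L/(\epsilon^2 N_{\max}^3+\alpha N_{\max})$, which again exploits the biharmonic term and is what pins the boundary of $\Omega_L$ to be sharp up to endpoints. This is the step I expect to be the most delicate in the argument.
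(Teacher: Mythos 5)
Your overall architecture — duality, the resonance identity, a case split on the dominant modulation, a frequency-regime split, and Cauchy--Schwarz combined with counting the level sets of the resonance polynomial — is the same as the paper's. But there is a genuine gap in the central lower bound. You claim
\begin{equation*}
\max(\langle\sigma\rangle,\langle\sigma_1\rangle,\langle\sigma_2\rangle) \gtrsim \epsilon^2|k_2||k+k_1|(k^2+k_1^2) + \alpha|k_2||k+k_1| + \beta|k_2|\langle\epsilon k_2\rangle,
\end{equation*}
i.e.\ that the max dominates the \emph{sum of the absolute values} of the three contributions. This is false: the identity only controls the algebraic sum $(k-k_1)\bigl((k+k_1)(\alpha+\epsilon^2(k^2+k_1^2))\mp\beta\langle\epsilon(k-k_1)\rangle\bigr)$, and the cubic part can cancel against the $\beta$ part. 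For $|k|$ large the real root of $h(k,\cdot)=(k+k_1)(\alpha+\epsilon^2(k^2+k_1^2))\mp\beta\langle\epsilon(k-k_1)\rangle$ lies near $k_1=-k$; e.g.\ at $k_1=-k+1$ your right-hand side is of order $\epsilon^2|k|^3\cdot|k_2|\sim\epsilon^2|k|^4$, whereas the true resonance is $|2k-1|\cdot|2\epsilon^2k^2\mp2\beta\epsilon k+O(1)|\sim\epsilon^2|k|^3$, and at $k_1=-k$ exactly it is only $2|k|\cdot\beta\langle2\epsilon k\rangle\sim\beta\epsilon|k|^2$. So the "$\epsilon^2k_{\max}^3$ gain uniformly available outside $k+k_1=0$" does not exist in the balanced regime $|k|\sim|k_1|$: there the usable gain degrades to $|k-k_1|\cdot|h(k,k_1)|\gtrsim|k-k_1|^2$, and establishing even this requires showing that $h(k,\cdot)$ has no \emph{integer} zeros for $|k|$ large and is bounded below by $c(\alpha,\beta,\epsilon)|k-k_1|$ on $\mathbb{Z}^2$ (the paper's Lemmas \ref{l:53}--\ref{l:54}, proved by locating the real root and using monotonicity of $h(k,\cdot)$). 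This is the nontrivial arithmetic input your proposal omits, and it is exactly where the hypothesis $l\geq-1$ is consumed: in the balanced case the weight $\langle k-k_1\rangle^{-2l}$ must be absorbed by the quadratic gain $\langle k-k_1\rangle^{2}$, i.e.\ one needs $2l+2\geq0$. Your attribution of $l\geq-1$ to "a low-frequency singularity in $\langle k_2\rangle^{-l}$" is therefore off ($\langle k_2\rangle\geq1$ always); and the high-high case $|k_1|\sim|k_2|\gg|k|$ that you flag as the delicate one is in fact the benign one, since there Lemma \ref{l:54} does supply the full $|k_{\max}|^4$ gain.

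The remaining ingredients you sketch are consistent with the paper: the reduction of the second estimate via $\widehat{\overline{v}}(k,\tau)=\overline{\widehat{v}(-k,-\tau)}$, the role of $b>\frac16$ (which the paper realizes as convergence of the Weyl-type sums $\sigma_1,\sigma_2$ with exponents $2b-$ and $4b-1$, requiring $e_1>\frac14$, $e_2>\frac13$), and your counting bound, which is the level-set form of the paper's Lemma \ref{l:52}. To close the argument you would need to replace your uniform lower bound by the correct dichotomy: quartic gain when $|k|$ and $|k_1|$ are separated (Lemma \ref{l:54}), only quadratic gain $|k-k_1|^2$ when they are comparable (Lemma \ref{l:53}), with the latter case verified to be nondegenerate on the integers.
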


\begin{proposition}\label{p:5.2}
Assume the hypotheses of proposition \ref{p:51}. Then,
\begin{align*}
    \left|\left| \frac{\langle k \rangle^s}{\langle \tau+\alpha k^2 + \epsilon^2 k^4\rangle}  \reallywidehat{un}(k,\tau) \right|\right|_{l^2_k L^1_\tau} &\lesssim  \lVert u \rVert_{X^{s,b}_S}\lVert n \rVert_{X^{l,\frac{1}{2}}_W}+\lVert u \rVert_{X^{s,\frac{1}{2}}_S}\lVert n \rVert_{X^{l,b}_W},\\
    \left|\left| \frac{\langle k \rangle^l}{\langle |\tau| - \beta |k|\langle \epsilon k \rangle\rangle} \reallywidehat{D^\rho (u\overline{v})}(k,\tau)\right|\right|_{l^2_k L^1_\tau}&\lesssim \lVert u \rVert_{X^{s,b}_S}\lVert v \rVert_{X^{s,\frac{1}{2}}_S}+\lVert u \rVert_{X^{s,\frac{1}{2}}_S}\lVert v \rVert_{X^{s,b}_S}.
\end{align*}
\end{proposition}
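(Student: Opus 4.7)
The plan is to reduce both inequalities in Proposition \ref{p:5.2} to slightly strengthened versions of the bounds in Proposition \ref{p:51}, paying a small $\delta > 0$ in the Bourgain dispersive exponent on the left while gaining the same $\delta$ by taking $b$ slightly below $1/2$ on the right. This is the standard device by which a multilinear $X^{s,b}$ estimate implies its companion $l^2_k L^1_\tau$ estimate at the endpoint $b=1/2$; see the analogous discussion in \cite{tao2006nonlinear} and the Zakharov treatments in \cite{ginibre1997cauchy,bourgain1993cauchy}.

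For the first inequality, set $\sigma = \tau + \alpha k^2 + \epsilon^2 k^4$. Since $\int_{\mathbb R}\langle\sigma\rangle^{-1-2\delta}\,d\tau$ is finite and uniform in $k$ for any $\delta > 0$, Cauchy--Schwarz in $\tau$ at fixed $k$ gives
\begin{equation*}
\int_{\mathbb R}\frac{\langle k\rangle^{s}|\reallywidehat{un}(k,\tau)|}{\langle\sigma\rangle}\,d\tau \;\lesssim_\delta\; \left(\int_{\mathbb R}\frac{\langle k\rangle^{2s}|\reallywidehat{un}|^{2}}{\langle\sigma\rangle^{1-2\delta}}\,d\tau\right)^{1/2}.
\end{equation*}
Squaring and summing in $k$ yields
\begin{equation*}
\left\|\frac{\langle k\rangle^{s}}{\langle\sigma\rangle}\reallywidehat{un}\right\|_{l^{2}_{k}L^{1}_{\tau}} \;\lesssim_\delta\; \|un\|_{X^{s,-1/2+\delta}_{S}},
\end{equation*}
and the same Cauchy--Schwarz step reduces the second inequality to $\|D^{\rho}(u\overline{v})\|_{X^{l,-1/2+\delta}_{W}}$. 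It therefore remains to establish a variant of Proposition \ref{p:51} with the exponent $-1/2$ on the left replaced by $-1/2+\delta$ for some small $\delta > 0$, at the expense of taking $b \in (1/6,\, 1/2-2\delta]$ on the right; since the hypotheses of Proposition \ref{p:5.2} already permit any such $b$, the conclusion would then follow.

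The main obstacle is verifying that this $\delta$-slack is indeed produced by the proof of Proposition \ref{p:51} uniformly over the dyadic and resonant regimes, particularly near the boundaries $l = -1$, $l = 2s+1-\rho$, and $|s-l| = 2$, where the multilinear dispersive weight identities saturate. My approach would be to carry out the proof of Proposition \ref{p:51} with $b = 1/2-2\delta$ from the start, and in each case check that whichever of the three dispersive weights $\langle \sigma\rangle$, $\langle \tau_1 + \alpha k_1^2 + \epsilon^2 k_1^4\rangle$, $\langle |\tau_2|-\beta|k_2|\langle\epsilon k_2\rangle\rangle$ dominates yields a power of the form $\langle\sigma_{\max}\rangle^{-b'}$ whose exponent can be lowered by $\delta$ without disturbing the remaining Cauchy--Schwarz and dyadic summations. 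The second inequality then follows by an identical argument, with the roles of the Schr\"odinger and wave dispersive weights exchanged.
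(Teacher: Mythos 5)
Your Cauchy--Schwarz step in $\tau$ is correct as an inequality, and it does reduce the claim to the bound $\lVert un\rVert_{X^{s,-1/2+\delta}_S}\lesssim\lVert u\rVert_{X^{s,1/2-2\delta}_S}\lVert n\rVert_{X^{l,1/2-2\delta}_W}$ (and its analogue for $D^\rho(u\overline v)$). The gap is that this strengthened target is not merely ``Proposition \ref{p:51} with a little slack to be verified'': on part of the admissible region it is \emph{false}. Since $\lVert\cdot\rVert_{X^{s,1/2-2\delta}}\leq\lVert\cdot\rVert_{X^{s,1/2+\delta}}$, your target would imply $\lVert un\rVert_{X^{s,b-1}_S}\lesssim\lVert u\rVert_{X^{s,b}_S}\lVert n\rVert_{X^{l,b}_W}$ with $b=\frac12+\delta$, and Proposition \ref{p:non-sharp} then forces $l\geq 2(b-1)=-1+2\delta$ and $s-l\leq-4(b-1)=2-4\delta$; but the hypotheses of Propositions \ref{p:51} and \ref{p:5.2} include $l=-1$ and $s-l=2$ (e.g.\ $(s,l)=(0,-1)$ or $(1,-1)$). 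Concretely, the loss materializes in the region where $\langle \tau+\alpha k^2+\epsilon^2k^4\rangle$ is the dominant modulation and is comparable to the resonance function $|k-k_1||h(k,k_1)|$: there the factor $\langle\tau+\alpha k^2+\epsilon^2k^4\rangle^{2\delta}$ you pay becomes a positive power of the spatial frequencies, and the Region I summations in the proof of Proposition \ref{p:51} (which use $2l+2\geq0$ and $2l-2s+4\geq0$ with equality permitted) have no room to absorb it. So the ``main obstacle'' you flag is not a verification to be carried out; it is an obstruction.

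The repair---which is what the paper's remark pointing to Takaoka alludes to---is to use your reduction only in the regions where the Schr\"odinger weight $\langle\tau+\alpha k^2+\epsilon^2k^4\rangle$ is \emph{not} dominant, so that the $2\delta$-loss can be transferred onto the dominant wave or Schr\"odinger weight of one of the factors, lowering its exponent from $\frac12$ to $\frac12-\delta$, which the Region II/III arguments tolerate (this is also why the right-hand side keeps one factor at exponent $\frac12$). In the dominant-modulation region one must instead estimate the $L^1_\tau$ integral directly, keeping the full weight $\langle\tau+\alpha k^2+\epsilon^2k^4\rangle^{-1}$: apply Cauchy--Schwarz jointly in $(k_1,\tau_1,\tau)$, so that the square of this weight appears in the first factor, integrate in $\tau$ against it via Lemma \ref{l:51} (with $\delta=2$ there), and then integrate in $\tau_1$; this recovers the full power of $\langle(k-k_1)h(k,k_1)\rangle$ with no loss and lands on the same frequency sums already handled in Proposition \ref{p:51}. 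As written, your proposal defers exactly the step that fails.
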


\begin{corollary}\label{c:5.1}
Assume the same hypotheses as before. Then for some $\theta \in (0,\frac{1}{3})$,
\begin{align*}
    \lVert un \rVert_{Z^s_S} &\lesssim T^\theta \lVert u \rVert_{Y^s_S}\lVert n \rVert_{Y^l_W},\\
    \lVert D^\rho (|u|^2)\rVert_{Z^l_W}&\lesssim T^\theta \lVert u \rVert_{Y^s_S}^2.
\end{align*}
\end{corollary}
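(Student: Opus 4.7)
The plan is to bootstrap the hypothesis-matched estimates of Propositions \ref{p:51} and \ref{p:5.2}---which together control the full $Z^s_S$ (resp.\ $Z^l_W$) norm---by combining them with the time-localization of Lemma \ref{l:4.3} to extract a positive power of $T$. Propositions \ref{p:51}--\ref{p:5.2} bound both pieces of $\lVert un\rVert_{Z^s_S}$ by the asymmetric bilinear form
\begin{equation*}
\lVert u\rVert_{X^{s,b}_S}\lVert n\rVert_{X^{l,\frac{1}{2}}_W} + \lVert u\rVert_{X^{s,\frac{1}{2}}_S}\lVert n\rVert_{X^{l,b}_W}
\end{equation*}
for any $b \in (\frac{1}{6},\frac{1}{2}]$; taking $b$ strictly below $\frac{1}{2}$ opens the gap that Lemma \ref{l:4.3} will convert into a $T^\theta$ prefactor.

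Concretely, given $u,n$ with bounded restricted norms, I first extend them to all of $\mathbb{R}$ with comparable global $Y$-norms (using the infimum definition of the restricted spaces) and multiply by the cutoff $\psi(t/T)$, which equals $1$ on $[0,T]$ and so preserves the time restriction. I then apply Propositions \ref{p:51}--\ref{p:5.2} to $\psi(t/T)\tilde u \cdot \psi(t/T)\tilde n$ and, in each resulting term, invoke Lemma \ref{l:4.3} on the factor carrying the smaller exponent $b$: choosing $b' \in (b,\frac{1}{2})$,
\begin{equation*}
\lVert\psi(t/T)\tilde u\rVert_{X^{s,b}_S} \lesssim T^{b'-b}\lVert\psi(t/T)\tilde u\rVert_{X^{s,b'}_S} \lesssim T^{b'-b}\lVert\tilde u\rVert_{X^{s,\frac{1}{2}}_S}\lesssim T^\theta \lVert u\rVert_{Y^s_{S,T}},
\end{equation*}
using the embedding $X^{s,\frac{1}{2}}_S \hookrightarrow X^{s,b'}_S$ (since $b' < \frac{1}{2}$) and $\lVert\tilde u\rVert_{X^{s,\frac{1}{2}}_S}\leq \lVert\tilde u\rVert_{Y^s_S}$. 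Setting $\theta = b'-b$ with $b$ close to $\frac{1}{6}$ and $b'$ close to $\frac{1}{2}$ realizes any prescribed $\theta \in (0,\frac{1}{3})$.

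The main technical point---and the reason for the specific augmentation defining $Y^s_S$ and $Y^l_W$---is that the \emph{other} factor, which sits at the endpoint $b=\frac{1}{2}$ and is not touched by Lemma \ref{l:4.3}, must satisfy $\lVert\psi(t/T)\tilde n\rVert_{X^{l,\frac{1}{2}}_W} \lesssim \lVert n\rVert_{Y^l_{W,T}}$ uniformly in $T$. This uniform cutoff bound fails for the plain $X^{\cdot,\frac{1}{2}}$ norm at the endpoint, but holds for the augmented $Y$-norm: the extra $l^2_k L^1_\tau$ term absorbs the convolution by $\widehat{\psi(\cdot/T)}(\tau)=T\widehat{\psi}(T\tau)$ via Young's inequality, independently of $T$. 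With this ingredient the two factors multiply to the desired $T^\theta\lVert u\rVert_{Y^s_S}\lVert n\rVert_{Y^l_W}$, and the estimate for $D^\rho(|u|^2)$ follows by the same argument applied to the second line of Propositions \ref{p:51}--\ref{p:5.2} with $v=u$.
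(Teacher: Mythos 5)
Your proposal is correct and is precisely the argument the paper intends: the paper dispatches this corollary with a one-line remark that it is "an immediate consequence" of Propositions \ref{p:51}, \ref{p:5.2}, and Lemma \ref{l:4.3}, and your write-up simply fills in that combination (asymmetric $b<\frac12$ placement, time-cutoff gain of $T^{b'-b}$ on the sub-endpoint factor, and the uniform-in-$T$ cutoff stability of the augmented $Y$-norm on the endpoint factor). No discrepancy with the paper's route.
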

\begin{remark}
Note that corollary \ref{c:5.1} is an immediate consequence of proposition \ref{p:51}, proposition \ref{p:5.2}, and lemma \ref{l:4.3}. Here we will not be concerned with obtaining an optimal range for $b$ or $\theta$.
\end{remark}

\begin{remark}
Note that the LHS in proposition \ref{p:5.2} is controlled by the same term as in proposition \ref{p:51}. Though proposition \ref{p:5.2} is proved in a similar way to proposition \ref{p:51} via the duality trick, one needs to be wary of the $L^1_\tau$-estimate; see \cite{takaoka1999well} for more detail.
\end{remark}

\begin{remark}
The method of direct estimation by the Cauchy-Schwarz inequality does not seem to work, at least directly, when $\rho=0$. One can check that the $\tau_1$-integral in (\ref{eq:5.27}) is not justified. In fact if $k=0$, then $IV=\infty$ by a direct computation.
\end{remark}

As a corollary, we show

\begin{theorem}\label{th:main1}
If $(s,l)\in \Omega_L$,
then (\ref{eq:main}) is locally well-posed; that is, there exists $T = T(\lVert u_0\rVert_{H^s}, \lVert n_0\rVert_{H^l},\lVert n_1\rVert_{H^{l-2}})>0$ and a unique $(u,n,\partial_t n) \in Y^s_{S,T}\times Y^l_{W,T}\times Y^{l-2}_{W,T}$ that satisfies (\ref{eq:main}). Further, if $T^\prime \in (0,T)$, then there exists a neighborhood $B\subseteq H^{s,l}$ around $(u_0,n_0,n_1)$ such that the data-to-solution map $(u_0,n_0,n_1)\mapsto (u,n,\partial_t n)$ is Lipschitz-continuous from $B$ to $Y^s_{S,T}\times Y^l_{W,T}\times Y^{l-2}_{W,T}$.
\end{theorem}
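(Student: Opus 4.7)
The plan is to apply a standard contraction mapping argument to the Duhamel formulation (\ref{eq:4.9}) in the Bourgain-type spaces $Y^s_{S,T}\times Y^l_{W,T}$, with the third component $\partial_t n$ recovered at the end. Since the defining inequalities of $\Omega_L$ in (\ref{admissible sufficient}) are strict at $l=2s+1$ and $s-l=-2$, I first fix $\rho\in(0,1]$ small enough that $l\leq 2s+1-\rho$ and $s-l\geq -2+\rho$, so that Propositions \ref{p:51}--\ref{p:5.2} and hence Corollary \ref{c:5.1} apply. I then introduce $\Phi=(\Phi_1,\Phi_2)$ acting on $(u,n)$ by
\begin{align*}
\Phi_1(u,n)(t) &= \psi(t)U_\epsilon(t)u_0 - i\psi(t/T)\int_0^t U_\epsilon(t-t')(un)(t')\,dt',\\
\Phi_2(u,n)(t) &= \psi(t)\bigl[\partial_t V_\epsilon(t)n_0 + V_\epsilon(t)n_1\bigr] + \beta^2 \psi(t/T)\int_0^t V_\epsilon(t-t')\partial_{xx}(|u|^2)(t')\,dt',
\end{align*}
where the cutoffs permit use of the global estimates of Section \ref{linearestimates}. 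The strategy is to show $\Phi$ is a strict contraction on a closed ball $B_R\subset Y^s_{S,T}\times Y^l_{W,T}$, with $R$ and then $T$ chosen appropriately.

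Combining the homogeneous estimates of Lemma \ref{linest1}, the Duhamel estimates of Section \ref{linearestimates}, and Corollary \ref{c:5.1} yields
$$\lVert\Phi_1(u,n)\rVert_{Y^s_{S,T}}\lesssim \lVert u_0\rVert_{H^s} + \lVert un\rVert_{Z^s_S}\lesssim \lVert u_0\rVert_{H^s} + T^\theta \lVert u\rVert_{Y^s_{S,T}}\lVert n\rVert_{Y^l_{W,T}}.$$
For $\Phi_2$, I factor $\partial_{xx}=-D^{2-\rho}D^\rho$ so that the second Duhamel estimate applies with $F=-D^\rho(|u|^2)$, producing
$$\lVert\Phi_2(u,n)\rVert_{Y^l_{W,T}}\lesssim \lVert n_0\rVert_{H^l} + (1+(\beta\epsilon)^{-1})\lVert n_1\rVert_{H^{l-2}} + T^\theta \lVert u\rVert_{Y^s_{S,T}}^2,$$
with all implicit constants depending only on $\alpha,\beta,\epsilon,\rho,s,l,b$. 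Setting $R$ to twice the sum of the data norms on the right and then shrinking $T$ so that $C_\epsilon T^\theta R<1/2$, $\Phi$ maps $B_R$ into itself and is a strict contraction there by the same multilinear estimates applied to differences. The Banach fixed-point theorem then furnishes a unique $(u,n)\in B_R$.

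To recover $\partial_t n$, differentiate (\ref{eq:4.9}) in time to obtain
$$\partial_t n(t)=\partial_{tt}V_\epsilon(t)n_0+\partial_t V_\epsilon(t)n_1+\beta^2\int_0^t \partial_t V_\epsilon(t-t')\partial_{xx}(|u|^2)(t')\,dt'.$$
The first two terms lie in $Y^{l-2}_{W,T}$ by Lemma \ref{linest1}, using the identity $\partial_{tt}V_\epsilon n_0=\beta^2(\partial_{xx}-\epsilon^2\partial_x^4)V_\epsilon n_0$ to transfer two derivatives of regularity to compensate for $n_0\in H^l$ being measured in $H^{l-2}$; the Duhamel term is handled by the third Duhamel estimate on $F=-D^\rho(|u|^2)$ together with Corollary \ref{c:5.1}. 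Uniqueness throughout the full Bourgain space follows by noting that any other solution must enter some $B_{R'}$ on a sufficiently short initial interval and therefore coincide with the fixed point there, whereupon one iterates. Lipschitz dependence of the fixed point on $(u_0,n_0,n_1)$ on any strictly smaller time interval $[0,T']$ is inherited from the contraction: perturbing the data perturbs $\Phi$ linearly in the $Y$-norms via Lemma \ref{linest1}, and the fixed-point map picks up a uniform Lipschitz constant from the geometric series produced by the contraction. The main obstacle here is not analytic but organizational — tracking the $\epsilon$-dependent constants coming from the wave part and confirming that $\rho>0$ can be chosen strictly inside $\Omega_L$ — since the substantive analytic work already resides in Propositions \ref{p:51}--\ref{p:5.2}.
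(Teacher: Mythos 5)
Your proposal is correct and follows essentially the same route as the paper: fix $\rho>0$ small enough that the strict inequalities defining $\Omega_L$ place $(s,l)$ inside the hypotheses of Propositions \ref{p:51}--\ref{p:5.2}, then run a contraction on the cutoff Duhamel map using Lemma \ref{linest1}, the Duhamel estimates with $\partial_{xx}$ split as $D^{2-\rho}D^{\rho}$, and Corollary \ref{c:5.1} for the $T^\theta$ gain. The only (harmless) organizational difference is that the paper contracts directly on the triple $(u,n,\partial_t n)\in Y^s_{S,T}\times Y^l_{W,T}\times Y^{l-2}_{W,T}$ with the third component defined as the distributional $\partial_t$ of the second, whereas you contract on $(u,n)$ and recover $\partial_t n$ afterwards by differentiating the Duhamel formula.
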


\begin{proof}[proof of theorem \ref{th:main1}]

Define $X=Y^s_{S,T} \times Y^l_{W,T} \times Y^{l-2}_{W,T}$ with $\lVert (u,n,\partial_t n)\rVert \coloneqq \lVert u \rVert_{X^s_{S,T}}+ \lVert n \rVert_{Y^l_{W,T}}+ \lVert \partial_t n\rVert_{Y^{l-2}_{W,T}}$, and $X(R)= \left\{(u,n,\partial_t n)\in X: \lVert (u,n,\partial_t n) \rVert \leq R\sigma \right\}$ for $R>0$ to be determined and $\sigma = \lVert u_0 \rVert_{H^s}+\lVert n_0 \rVert_{H^l}+\lVert n_1 \rVert_{H^{l-2}}$. Further, define a map $\Gamma (u,n,\partial_t n) = (\Gamma_1 (u,n), \Gamma_2 (u), \Gamma_3 (u))$ on $X(R)$ where
\begin{align}
     \Gamma_1(u,n)(t) &= \psi(t)U(t)u_0 - i \psi(t)\int_0^t U(t-t^\prime) (un)(t^\prime)dt^\prime,\nonumber\\
    \Gamma_2 (u)(t) &= \psi(t)\partial_t V(t)n_0+ \psi(t)V(t)n_1 + \beta^2 \psi(t)\int_0^t V(t-t^\prime) \partial_{xx}(|u|^2)(t^\prime)dt^\prime,
\end{align}
and $\Gamma_3 (u)(t)= \partial_t \Gamma_2(u)(t)$ is defined in the sense of distribution. If $(s,l)\in \Omega_L$, pick $\rho>0$ sufficiently small such that the hypotheses of proposition \ref{p:51} are fulfilled. Then by corollary \ref{c:5.1}, there exists $\theta>0$ such that

\begin{align}
    \lVert \Gamma(u,n,\partial_t n)\rVert &\lesssim \sigma + T^\theta \lVert u \rVert_{Y^s_{S,T}}\lVert n\rVert_{Y^l_{W,T}} + T^\theta \lVert u \rVert_{Y^s_{S,T}}^2 \leq \sigma + 2T^\theta R^2 \sigma^2.
\end{align}

If $R$ is chosen sufficiently big (depending on the choice of given parameters), then by choosing $0<T \lesssim \sigma^{-\theta}$, we conclude that $\Gamma$ maps into $X(R)$. Similarly given $(u_1,n_1,\partial_t n_1), (u_2,n_2,\partial_t n_2) \in X$, we have
\begin{align}
    \lVert \Gamma(u_1,n_1,\partial_t n_1) - \Gamma(u_2,n_2,\partial_t n_2) \rVert &\lesssim T^\theta \Big(\lVert u_1 \rVert_{Y^s_{S,T}}\lVert n_1-n_2\rVert_{Y^l_{W,T}} + \lVert n_2 \rVert_{Y^l_{W,T}}\lVert u_1-u_2\rVert_{Y^s_{S,T}} \Big)\nonumber\\
    &+T^\theta\Big(\lVert u_1\rVert_{Y^s_{S,T}}\lVert u_1-u_2\rVert_{Y^s_{S,T}}+\lVert u_2\rVert_{Y^s_{S,T}}\lVert u_1-u_2\rVert_{Y^s_{S,T}}\Big)\nonumber\\
    &\lesssim T^\theta R \sigma \lVert (u_1,n_1,\partial_t n_1)-(u_2,n_2,\partial_t n_2)\rVert,
\end{align}
and by choosing $T \lesssim \sigma^{-\theta}$ sufficiently small, $\Gamma$ defines a contraction operator on $X(R)$, and hence there exists a unique $(u,n,\partial_t n) \in X \hookrightarrow C([0,T], H^{s,l})$ that satisfies (\ref{eq:main}). Local lipschitz-continuity of the data-to-solution map follows from the contraction mapping principle.
\end{proof}

\begin{remark}
By construction, $\partial_t n$ is indeed the distributional derivative of $n$. By defining $X$ to include $\partial_t n$, not just $(u,n)$, we obtain the continuity of $\partial_t n$ in $t$ as a result of the contraction argument.
\end{remark}

Now the goal is to show the boundedness of the multilinear operators corresponding to the nonlinear terms which, at a technical level, involves directly estimating a $L^\infty L^1$-norm of a function defined on the spacetime Fourier space in different regions depending on which dispersive weight is most dominant. Observing that
\begin{equation}
    (\tau + \alpha k^2 +\epsilon^2 k^4)-(\tau_1 + \alpha k_1^2 + \epsilon^2 k_1^4)-(\tau_2 \pm \beta k_2 \langle \epsilon k_2\rangle) = (k-k_1)\Big((k+k_1)(\alpha + \epsilon^2 (k^2+k_1^2))\mp \beta \langle \epsilon (k-k_1)\rangle\Big),
\end{equation}
we obtain
\begin{equation}\label{eq:55}
    \max\Big(|\tau + \alpha k^2 +\epsilon^2 k^4|,|\tau_1 + \alpha k_1^2 + \epsilon^2 k_1^4|,\left||\tau_2|-  \beta |k_2| \langle \epsilon k_2 \rangle\right|\Big)\geq \frac{1}{3}|k-k_1|\left|(k+k_1)(\alpha + \epsilon^2 (k^2+k_1^2))\mp \beta \langle \epsilon (k-k_1)\rangle\right|,
\end{equation}
where the sign on the RHS of (\ref{eq:55}) depends on $\tau_2,k_2$. Since this subtlety does not affect our subsequent analysis, we do not keep track of the sign. For notational convenience, we define $h(k,k_1) = (k+k_1)(\alpha + \epsilon^2 (k^2+k_1^2))\mp \beta \langle \epsilon (k-k_1)\rangle$.

\begin{lemma}\parencite[Lemma 3.3]{erdougan2013smoothing}\label{l:51}
If $\delta \geq \gamma \geq 0$ and $\delta + \gamma>1$, then
\begin{equation*}
\int \dfrac{d\tau}{\langle \tau-a_1 \rangle^\delta \langle \tau-a_2 \rangle^\gamma} \lesssim \langle a_1-a_2 \rangle^{-\gamma} \phi_\delta (a_1-a_2),\:\text{where}    
\end{equation*}
    \begin{center}
        $\phi_\delta (a) \simeq \begin{cases}
    1,& \delta > 1\\
    \log (1+\langle a \rangle),              & \delta=1\\
    \langle a \rangle^{1-\delta}, & \delta<1.
\end{cases}$
    \end{center}
\end{lemma}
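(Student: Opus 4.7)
The plan is to prove this by a case analysis on the size of $a := a_1 - a_2$. By translating $\tau \mapsto \tau + a_2$, we may assume $a_2 = 0$ and $a_1 = a$. If $|a| \lesssim 1$, then $\langle \tau - a\rangle \simeq \langle \tau\rangle$, so the integrand is dominated by $\langle \tau\rangle^{-(\delta+\gamma)}$, which is integrable by the hypothesis $\delta + \gamma > 1$; this yields the required $O(1)$ bound, matching $\langle a\rangle^{-\gamma}\phi_\delta(a) \simeq 1$.

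For $|a| \gtrsim 1$, partition $\mathbb{R}$ into three regions based on whether $\tau$ is close to $0$, close to $a$, or far from both:
\begin{equation*}
    (R_1):\: |\tau|\leq |a|/2,\quad (R_2):\: |\tau - a|\leq |a|/2,\quad (R_3):\: |\tau|,\,|\tau-a|\geq |a|/2.
\end{equation*}
In $(R_1)$ we have $\langle \tau - a\rangle \simeq \langle a\rangle$, reducing the integral to $\langle a\rangle^{-\delta}\int_{|\tau|\leq |a|/2}\langle \tau\rangle^{-\gamma}d\tau$. Depending on whether $\gamma$ is less than, equal to, or greater than $1$, the inner integral grows like $|a|^{1-\gamma}$, $\log(1+|a|)$, or $O(1)$ respectively; combining with $\langle a\rangle^{-\delta}$ and using $\delta \geq \gamma$ to absorb the excess decay one verifies the contribution is bounded by $\langle a\rangle^{-\gamma}\phi_\delta(a)$. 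In $(R_2)$ the roles of $\delta$ and $\gamma$ are swapped: extract $\langle a\rangle^{-\gamma}$ and evaluate $\int_{|\tau - a|\leq |a|/2}\langle \tau - a\rangle^{-\delta}d\tau$, which produces exactly the factor $\phi_\delta(a)$. In $(R_3)$ both brackets are $\gtrsim \langle a\rangle$; splitting further into $|\tau|\lesssim |a|$ (bounded measure with integrand $\lesssim \langle a\rangle^{-(\delta+\gamma)}$) and $|\tau|\gg |a|$ (where $\langle \tau-a\rangle\simeq\langle \tau\rangle$ and $\int_{|\tau|\gtrsim |a|}\langle \tau\rangle^{-(\delta+\gamma)}d\tau$ converges by $\delta + \gamma > 1$) both contribute $\langle a\rangle^{1-\delta-\gamma}$, which is at most $\langle a\rangle^{-\gamma}\phi_\delta(a)$ in each of the three regimes of $\delta$.

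The main obstacle is the bookkeeping to check that each regional contribution fits under the envelope $\langle a\rangle^{-\gamma}\phi_\delta(a)$, with particular care at the borderline cases $\delta = 1$ (logarithm) and $\gamma = 1$. The hypothesis $\delta \geq \gamma$ is essential in $(R_1)$ to convert a surplus factor of $\langle a\rangle^{-\delta}$ into $\langle a\rangle^{-\gamma}$, while $\delta + \gamma > 1$ is needed for the tail convergence in $(R_3)$. No deeper estimates are required beyond standard bounds on the one-dimensional integrals $\int_{|\tau|\leq R}\langle \tau\rangle^{-\alpha}d\tau$.
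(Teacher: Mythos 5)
Your argument is correct and is essentially the standard proof of this calculus lemma (the paper itself gives no proof, deferring to the cited Lemma 3.3 of Erdo\u{g}an--Tzirakis, whose proof proceeds by exactly this translation and three-region decomposition). The regional bookkeeping you outline checks out in all sub-cases, including the borderline ones $\delta=1$, $\gamma=1$, and $\gamma=0$, so nothing further is needed.
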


\begin{lemma}\label{l:52}
For all $e_1>\frac{1}{4}, e_2>\frac{1}{3}$,
\begin{align*}
    \sigma_1(k,\tau)&\coloneqq\sum_{k_1 \neq k; \pm}\frac{1}{\langle \epsilon^2 k_1^4+\alpha k_1^2 + \tau \pm \beta (k-k_1)\langle \epsilon (k-k_1)\rangle\rangle^{e_1}}\leq c_1(\alpha,\beta,\epsilon,e_1)<\infty,\\
    \sigma_2(k,\tau)&\coloneqq \sum_{k_1} \frac{1}{\langle k_1^3 - \frac{3k}{2}k_1^2 + \Big(\frac{\alpha + 2\epsilon^2 k^2}{2\epsilon^2}\Big)k_1 + \frac{\tau - \alpha k^2-\epsilon^2 k^4}{4\epsilon^2 k} \rangle^{e_2}}\leq c_2(\alpha,\beta,\epsilon,e_2)<\infty,\: \text{if}\: k \neq 0.
\end{align*}
\end{lemma}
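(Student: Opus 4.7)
The plan is to bound each sum by a dyadic decomposition in the inner bracket value, followed by a uniform-in-$(k,\tau)$ lattice-point count in each dyadic shell. For each $M\geq 0$, let $B_M^{(i)}\coloneqq\{k_1\in\mathbb{Z}: |F_i(k_1,k,\tau)|\in[2^M,2^{M+1})\}$ where $F_1$ and $F_2$ denote the expressions inside $\langle\cdot\rangle$ in $\sigma_1$ and $\sigma_2$ respectively (with the sum over $\pm$ and the restriction $k_1\neq k$ absorbed into $B_M^{(1)}$). Then $\sigma_i\leq\sum_{M\geq 0} 2^{-Me_i}|B_M^{(i)}|$, and it suffices to show $|B_M^{(1)}|\lesssim_{\alpha,\beta,\epsilon} 2^{M/4}$ and $|B_M^{(2)}|\lesssim_{\alpha,\beta,\epsilon} 2^{M/3}$, uniformly in $k\in\mathbb{Z}, \tau\in\mathbb{R}$. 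Geometric summation then converges for $e_1>1/4$ and $e_2>1/3$ respectively, matching the hypotheses.

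The cleaner case is $\sigma_2$, whose bracket is a strict cubic polynomial $g$ in $k_1$. Complete the cube via $k_1=y+k/2$: a direct calculation gives $g(y+k/2)=y^3+Ay+B$ with $A=\frac{\alpha}{2\epsilon^2}+\frac{k^2}{4}\geq \frac{\alpha}{2\epsilon^2}>0$ and a $(k,\tau)$-dependent constant $B$. Since $A>0$, the polynomial is strictly increasing in $y$, so its preimage of any interval of length $2N$ is a single interval $[y_-,y_+]$ satisfying $(y_+-y_-)(y_+^2+y_+y_-+y_-^2+A)=2N$. Using $y_+^2+y_+y_-+y_-^2\geq (y_+-y_-)^2/4$ this gives $y_+-y_-\leq 2N^{1/3}$, uniformly in $k$, and therefore $|B_M^{(2)}|\leq 2\cdot 2^{M/3}+1$.

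For $\sigma_1$, the bracket $R_\pm(k_1)\coloneqq\epsilon^2 k_1^4+\alpha k_1^2+\tau\pm\beta(k-k_1)\langle\epsilon(k-k_1)\rangle$ is quartic-like but not a polynomial. Viewing $R_\pm$ as a function of a real variable, a direct differentiation of the identity $\frac{d}{du}[u\langle\epsilon u\rangle]=\frac{1+2\epsilon^2 u^2}{\langle\epsilon u\rangle}$ yields $R_\pm''(y)\geq 12\epsilon^2 y^2+2\alpha-2\beta\epsilon$, so $R_\pm$ is convex outside a $k$-independent window $|y|\leq y_0(\alpha,\beta,\epsilon)$. Real-analyticity then gives that $R_\pm$ has at most $N_0=N_0(\alpha,\beta,\epsilon)$ monotonic branches. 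On each monotonic branch, the preimage of an interval of length $2N$ is an interval $[y_-,y_+]$; extracting the quartic leading term via $R_\pm(y_+)-R_\pm(y_-)\geq \tfrac{\epsilon^2}{2}(y_+^4-y_-^4)$ (valid once $|y|$ exceeds a $k$-independent threshold produced by the convexity estimate) and applying $(a+b)^{1/4}\leq a^{1/4}+b^{1/4}$ yields $y_+-y_-\lesssim N^{1/4}/\sqrt{\epsilon}$. Summing over the bounded number of branches produces the claimed $|B_M^{(1)}|\lesssim 2^{M/4}$.

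The main obstacle is the uniformity in $k$ for $\sigma_1$: the critical points of $R_\pm$ can sit at $|y|\sim|k|^{1/3}$, well outside the a priori convexity window, producing intermediate monotonic branches whose lengths grow with $|k|$. The key to the uniform count is that the quartic-absorption inequality $\tfrac{\epsilon^2}{2}(y_+^4-y_-^4)\leq R_\pm(y_+)-R_\pm(y_-)$ survives on these intermediate branches after careful bookkeeping of the corrections $\alpha y^2$ and $\beta(k-y)\langle\epsilon(k-y)\rangle\leq\beta|k-y|+\beta\epsilon(k-y)^2$ against the quartic, using that on the relevant portion of the branch $|y|$ still dominates the linear and quadratic cross terms in $k$.
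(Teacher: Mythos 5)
Your treatment of $\sigma_2$ is complete and correct: after the shift $k_1=y+k/2$ the cubic becomes $y^3+Ay+B$ with $A\geq\frac{\alpha}{2\epsilon^2}>0$, monotonicity makes each shell's preimage an interval, and the identity $y_+^2+y_+y_-+y_-^2=\frac{1}{4}(y_+-y_-)^2+\frac{3}{4}(y_++y_-)^2$ gives the $2^{M/3}$ count uniformly in $k,\tau$. This is in effect a self-contained proof of the polynomial counting lemma that the paper simply cites from Erdo\u{g}an--Tzirakis.

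For $\sigma_1$, however, there is a genuine gap, and it sits exactly where you flag "the main obstacle." Your quartic-absorption inequality $R_\pm(y_+)-R_\pm(y_-)\geq\frac{\epsilon^2}{2}(y_+^4-y_-^4)$ is \emph{false} near the critical points $r$ with $|r|\sim(|k|/\epsilon)^{1/3}$: at such a point $4\epsilon^2 r^3$ is exactly balanced by $\beta f'(k-r)\approx 2\beta\epsilon|k|$, so the quartic does \emph{not} dominate the $k$-linear cross term there — the threshold beyond which it does is $|y|\gtrsim(|k|/\epsilon)^{1/3}$, which is $k$-dependent, not the $k$-independent window produced by your convexity estimate. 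Concretely, on the branch just to the right of $r$ one has $R_\pm(y_+)-R_\pm(y_-)\approx\frac{1}{2}R_\pm''(r)(y_+-r)^2\sim\epsilon^2 r^2(y_+-r)^2$, while $\frac{\epsilon^2}{2}(y_+^4-y_-^4)\sim\epsilon^2 r^3(y_+-r)$; the ratio is $\sim(y_+-r)/r\ll1$. So the asserted "careful bookkeeping using that $|y|$ still dominates the linear and quadratic cross terms in $k$" cannot work as stated, and the last paragraph records the obstacle rather than resolving it. (The count itself is still true, but proving it this way requires a separate analysis of an $O(r)$-neighborhood of each critical point, splitting on whether $2^M\lessgtr\epsilon^4|k|^{4/3}$.) The paper avoids all of this: since $\bigl|(k-k_1)\langle\epsilon(k-k_1)\rangle-\epsilon(k-k_1)|k-k_1|\bigr|\leq\frac{1}{\epsilon}$, the weight may be replaced (after reshuffling the $\pm$ sum) by $\langle\epsilon^2k_1^4+\alpha k_1^2+\tau\pm\beta\epsilon(k-k_1)^2\rangle$, a genuine quartic polynomial in $k_1$ with leading coefficient $\epsilon^2$; the shell count then follows from the sublevel-set bound $|\{y:|P(y)|\leq\lambda\}|\lesssim(\lambda/\epsilon^2)^{1/4}$ plus the $O(1)$ bound on connected components, which depends only on the degree and leading coefficient and is therefore automatically uniform over the locations of the critical points. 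I recommend either adopting that reduction or supplying the missing critical-point case analysis.
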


\begin{lemma}\label{l:53}
There exist $C(\alpha,\beta,\epsilon),c(\alpha,\beta,\epsilon)>0$ such that for all $(k,k_1)\in \mathbb{Z}^2$ that satisfies $\left\{|k|\geq C(\alpha,\beta,\epsilon)\right\}\cup \left\{|k_1|\geq C(\alpha,\beta,\epsilon)\right\}$, we have
\begin{equation*}
    |h(k,k_1)| \geq c(\alpha,\beta,\epsilon) |k-k_1|.
\end{equation*}
\end{lemma}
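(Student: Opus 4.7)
The plan is to pass to sum and difference variables $u=k+k_1$, $v=k-k_1$ (integers of the same parity), which recasts
\[
h(k,k_1)= \alpha u + \tfrac{\epsilon^2 u^3}{2}+\tfrac{\epsilon^2 u v^2}{2}\mp\beta\langle\epsilon v\rangle,
\]
and turns the hypothesis $\max(|k|,|k_1|)\geq C$ into $|u|+|v|\geq 2C$. The target bound $|h|\geq c|v|$ is trivial when $v=0$. The central observation is that the mixed term $\tfrac{\epsilon^2 uv^2}{2}$ grows faster in $|v|$ than the competing $\beta\langle\epsilon v\rangle\leq \beta+\beta\epsilon|v|$ whenever $|u|\geq 1$, while the cubic piece $\tfrac{\epsilon^2 u^3}{2}$ dominates everything else as $|u|\to\infty$. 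Integrality of $k,k_1$ enters only to ensure $u\in\mathbb{Z}$, so that $u\neq 0$ forces $|u|\geq 1$; no Diophantine information is needed.

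I would split into three regimes using a threshold $V_0=V_0(\alpha,\beta,\epsilon)$, to be chosen below. In Case (A), $|v|\leq V_0$, hence $|u|\geq 2C-V_0$: the cubic term dominates, and a reverse triangle inequality yields $|h|\geq \tfrac{\epsilon^2}{4}|u|^3$ once $C$ is large, which can be forced to exceed $c V_0\geq c|v|$ by taking $C$ sufficiently large in terms of $V_0$ and $c$. In Case (B1), $u=0$ and $|v|\geq V_0$: then $h=\mp\beta\langle\epsilon v\rangle$, and choosing $V_0\geq 1/\epsilon$ gives $|h|\geq \tfrac{\beta\epsilon}{\sqrt{2}}|v|$. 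In Case (B2), $|u|\geq 1$ and $|v|\geq V_0$: then $\bigl|u(\alpha+\tfrac{\epsilon^2(u^2+v^2)}{2})\bigr|\geq\tfrac{\epsilon^2 v^2}{2}$, while $\beta\langle\epsilon v\rangle\leq\beta+\beta\epsilon|v|$, so $|h|\geq\tfrac{\epsilon^2 v^2}{2}-\beta-\beta\epsilon|v|\geq c|v|$ provided $V_0$ is chosen large enough in terms of $\alpha,\beta,\epsilon$.

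The main (essentially only) obstacle is the bookkeeping of constants: one first fixes $V_0$ so that Cases (B1) and (B2) produce a common positive $c=c(\alpha,\beta,\epsilon)$, and then chooses $C=C(\alpha,\beta,\epsilon)$ large enough that Case (A) also delivers the same $c$. Each estimate in the three cases is elementary and can be made effective, so this bookkeeping is straightforward; the output is the asserted constants $C,c$ depending only on $\alpha,\beta,\epsilon$.
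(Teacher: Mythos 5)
Your proof is correct, and it takes a genuinely different route from the paper's. The paper keeps the variables $(k,k_1)$ and argues via monotonicity: it shows $\partial_{k_1}h\geq\alpha$ (so $h(k,\cdot)$ has a unique real root $r(k)$), tracks the root's location ($|r(k)+k|\to 0$ as $|k|\to\infty$) to deduce $\inf_{k_1\in\mathbb{Z}}|h(k,k_1)|\geq\beta$ for $|k|$ large, and then splits on whether $|k-k_1|\leq 3|k|$ or not, using Lemma \ref{lemlowerbound}-type size comparisons in the latter regime. Your change to sum/difference variables $u=k+k_1$, $v=k-k_1$ cleanly separates the polynomial part $u(\alpha+\tfrac{\epsilon^2(u^2+v^2)}{2})$ from the perturbation $\beta\langle\epsilon v\rangle$, converts the hypothesis into the single inequality $|u|+|v|\geq 2C$, and reduces everything to term-by-term domination; integrality is used only through $|u|\geq 1$ when $u\neq 0$, and no root-tracking is needed. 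What each buys: the paper's monotonicity argument gives structural information (a uniform lower bound $|h|\geq\beta$ on the whole lattice for $|k|$ large) that is reused in spirit elsewhere, while your decomposition makes the constant-chasing fully explicit and, in particular, handles the regime $|k-k_1|\lesssim|k|$ more transparently — in your Case (A) the cubic term $\tfrac{\epsilon^2}{2}|u|^3$ visibly beats $c|v|\leq cV_0$ once $C$ is large, whereas the paper's passage from $|h|\geq\beta$ to $|h|/|k-k_1|\geq\beta/(3C)$ in that regime is stated rather tersely. One small point to make explicit when writing this up: in Case (A) you need $2C>V_0$ so that $u\neq 0$ is forced, and the constants must be fixed in the order $V_0$, then $c$, then $C$, exactly as you indicate.
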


\begin{lemma}\label{l:54}
There exists $C(\alpha,\beta,\epsilon)>0$ such that if $\left\{0\neq |k|\geq 2 |k_1|\right\}\cap \left\{|k|\geq C(\alpha,\beta,\epsilon)\right\}$, we have $|k-k_1||h(k,k_1)| \gtrsim |k|^4$. Similarly if $\left\{0\neq \frac{|k_1|}{2}\geq |k|\right\}\cap \left\{|k_1|\geq C(\alpha,\beta,\epsilon)\right\}$, then $|k-k_1||h(k,k_1)| \gtrsim |k_1|^4$.
\end{lemma}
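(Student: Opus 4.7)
The strategy is a pure low/high-frequency dominance argument: in the stated regimes the leading cubic factor of $h(k,k_1)$ overwhelms the linear (in frequencies) correction $\beta\langle\epsilon(k-k_1)\rangle$, and on top of this the factor $|k-k_1|$ is comparable to the larger of $|k|,|k_1|$.

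Consider the first regime $0\neq |k|\geq 2|k_1|$. By the reverse triangle inequality one has $|k-k_1|\geq |k|-|k_1|\geq |k|/2$ and similarly $|k+k_1|\geq |k|/2$. Moreover $\alpha+\epsilon^2(k^2+k_1^2)\geq \epsilon^2 k^2$, so the first summand of $h$ satisfies
\begin{equation*}
\bigl|(k+k_1)(\alpha+\epsilon^2(k^2+k_1^2))\bigr|\;\geq\;\tfrac{\epsilon^2}{2}|k|^3.
\end{equation*}
On the other hand, since $|k-k_1|\leq |k|+|k_1|\leq \tfrac{3}{2}|k|$,
\begin{equation*}
\beta\langle\epsilon(k-k_1)\rangle\;\leq\;\beta(1+\epsilon|k-k_1|)\;\leq\;\beta+\tfrac{3\beta\epsilon}{2}|k|.
\end{equation*}
Choosing $C(\alpha,\beta,\epsilon)$ large enough that $\tfrac{\epsilon^2}{2}|k|^3\geq 2\bigl(\beta+\tfrac{3\beta\epsilon}{2}|k|\bigr)$ whenever $|k|\geq C(\alpha,\beta,\epsilon)$, the triangle inequality then gives $|h(k,k_1)|\geq \tfrac{\epsilon^2}{4}|k|^3$. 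Combining with $|k-k_1|\geq |k|/2$ yields $|k-k_1||h(k,k_1)|\gtrsim \epsilon^2|k|^4$ in this regime.

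The second regime $0\neq |k_1|/2\geq |k|$ (equivalently $|k_1|\geq 2|k|$) is handled by the identical argument with the roles of $k$ and $k_1$ interchanged: $|k\pm k_1|\geq |k_1|/2$, $\alpha+\epsilon^2(k^2+k_1^2)\geq \epsilon^2 k_1^2$, so the cubic summand has size $\gtrsim \epsilon^2|k_1|^3$, while $\beta\langle\epsilon(k-k_1)\rangle\lesssim \beta+\beta\epsilon|k_1|$ is dominated once $|k_1|\geq C(\alpha,\beta,\epsilon)$. We obtain $|k-k_1||h(k,k_1)|\gtrsim \epsilon^2|k_1|^4$.

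The only real point to watch is the choice of $C(\alpha,\beta,\epsilon)$: it must be taken large enough to absorb \emph{both} the additive constant $\beta$ and the linear-in-frequency term $\tfrac{3\beta\epsilon}{2}|k|$ (resp.\ $\beta\epsilon|k_1|$) into a fixed fraction of the cubic term. A single choice such as $C(\alpha,\beta,\epsilon)=\max(1,\,C_0\beta/\epsilon^2,\,C_0\beta/\epsilon)^{1/2}$ for a suitable absolute $C_0$ suffices simultaneously for both parts, and no further subtlety arises; the estimate is essentially a scaling observation since in both regimes one frequency variable controls all three factors of $h$.
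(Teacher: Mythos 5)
Your proof is correct and follows essentially the same route as the paper's: bound the cubic summand of $h$ below by $\tfrac{\epsilon^2}{2}|k|^3$ using $|k+k_1|\geq|k|/2$, dominate $\beta\langle\epsilon(k-k_1)\rangle$ by a linear-in-$|k|$ quantity, and absorb it for $|k|\geq C(\alpha,\beta,\epsilon)$; the only cosmetic difference is that the paper invokes the symmetry of $h$ in $(k,k_1)$ to dispatch the second regime rather than rerunning the argument.
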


\begin{proof}[proof of lemma \ref{l:52}]
The second inequality can be proven in a similar way as to \cite[lemma 3(c)]{erdougan2013smoothing}. For the first inequality, there exists $c>0$ independent of $k,k_1$ such that
\begin{align*}
\left|(k-k_1)\langle \epsilon (k-k_1)\rangle - (k-k_1)|\epsilon (k-k_1)|\right|\leq c.
\end{align*}
Hence the term $\langle \epsilon (k-k_1)\rangle$ in the summation can be replaced with $|\epsilon (k-k_1)|$. Then
\begin{align*}
    \sigma_1(k,\tau) \leq \sum_{k_1 \neq k; \pm}\frac{1}{\langle \epsilon^2 k_1^4+\alpha k_1^2 + \tau \pm \beta\epsilon (k-k_1)^2\rangle^{e_1}}\leq c^\prime,
\end{align*}
where the constant is independent of $k,\tau$ by an argument similar to \cite[lemma 3(c)]{erdougan2013smoothing}.
\end{proof}

\begin{proof}[proof of lemma \ref{l:53}]

Assume $k\neq k_1$. For a fixed $k\in \mathbb{Z}$, let $r_{\mp}(k) \in \mathbb{R}$ be the unique real-root of $h(k,\cdot)$ where $r_-(k)$ corresponds to the minus sign in $h(k,\cdot)$, and similarly for $r_+(k)$; we drop the $\mp$-subscript. Noting that $h$ is symmetric in both arguments, it suffices to assume $|k|\geq C(\alpha,\beta,\epsilon)$ where
\begin{align}\label{lemlowerbound}
C(\alpha,\beta,\epsilon)\coloneqq \max\Big(C_1(\alpha,\beta,\epsilon), \sqrt{\frac{3\sqrt{2}\beta}{\epsilon}},\frac{1}{3\epsilon}\Big)    
\end{align}
where for all $|k|\geq C_1(\alpha,\beta,\epsilon)>0$, we have $\beta \langle \epsilon k \rangle<|k|(\alpha + \epsilon^2 k^2)$.

We first show that for $k$ sufficiently big, $r(k) \notin \mathbb{Z}$. For $k \in \mathbb{Z}$, consider the graphs of $k_1 \mapsto (k+k_1)(\alpha + \epsilon^2 (k^2+k_1^2))$ and $k_1\mapsto \pm \beta \langle \epsilon (k-k_1)\rangle$. If we require that the $y$-intercept of the cubic polynomial is greater (in magnitude) than that of the square-root term, i.e., $\beta \langle \epsilon k \rangle < |k|(\alpha+\epsilon^2k^2)$, then $r(k) \in [-c_2k,0]$ for $k>0$ and $r(k) \in [0,-c_2k]$ for $k<0$ where $c_2 = c_2(\alpha,\beta,\epsilon)>0$.

Now we claim $\lim\limits_{|k|\rightarrow \infty }|r(k)+k|=0$. From $h(k,r(k))=0$, we obtain
\begin{align}
    |r(k)+k| = \left|\frac{\beta \langle \epsilon (k-r(k)\rangle)}{\alpha + \epsilon^2 (k^2+r(k)^2)}\right| \lesssim \frac{\beta \epsilon |k-r(k)|}{\alpha+\epsilon^2k^2}\leq \frac{(1+c_2)\beta \epsilon |k|}{\alpha +\epsilon^2 k^2}\xrightarrow[|k|\rightarrow\infty]{}0.
\end{align}

Hence if $|k|$ is sufficiently big and $r(k)\in \mathbb{Z}$, then $r(k)=-k$, which cannot be since $|h(k,-k)| = \beta \langle 2 \epsilon k \rangle\geq \beta$. For $k\in \mathbb{Z}$, to show $\inf\limits_{k_1\in \mathbb{Z}}|h(k,k_1)|$ is attained at $k_1 = -k$, note that from standard calculus,
\begin{equation}
    \partial_{k_1}h(k,k_1) = 3\epsilon^2 k_1^2 + 2 \epsilon^2 k k_1 + \alpha +\epsilon^2k^2 \pm \frac{\beta \epsilon^2 (k-k_1)}{\langle \epsilon (k-k_1)\rangle}\geq \alpha+\frac{2}{3}\epsilon^2k^2\pm \frac{\beta \epsilon^2 (k-k_1)}{\langle \epsilon (k-k_1)\rangle},
\end{equation}
and since $\frac{\beta \epsilon^2 |k-k_1|}{\langle \epsilon (k-k_1)\rangle}\leq \beta\epsilon$, we have $\partial_{k_1}h \geq \alpha$ by (\ref{lemlowerbound}), and hence 
\begin{align*}
\inf\limits_{|k|\geq C(\alpha,\beta,\epsilon),\: (k,k_1)\in \mathbb{Z}^2}|h(k,k_1)|\geq \beta.    
\end{align*}

If $|k-k_1| \leq 3 |k|$, then $\inf\limits_{|k|\geq C(\alpha,\beta,\epsilon),\: (k,k_1)\in \mathbb{Z}^2}|\frac{h(k,k_1)}{k-k_1}|\geq \frac{\beta}{3 C(\alpha,\beta,\epsilon)}$. If $|k-k_1|\geq 3 |k|$, then $|k_1| \geq 2 |k|$, $|k+k_1| \geq \frac{|k_1|}{2}$, and $|k-k_1| \leq \frac{3|k_1|}{2}$. Furthermore
\begin{align}
    \left|\frac{h(k,k_1)}{k-k_1}\right| \geq \frac{1}{3}(\alpha+\epsilon^2(k^2+k_1^2)) - \left|\beta \frac{\langle \epsilon (k-k_1)\rangle}{k-k_1}\right|\geq \frac{1}{3}(\alpha+\epsilon^2(k^2+k_1^2)) - \sqrt{2}\beta \epsilon \geq \frac{\alpha}{3},
\end{align}
where the last inequality is by (\ref{lemlowerbound}).
\end{proof}

\begin{proof}[proof of lemma \ref{l:54}]
Since $h$ is symmetric in $k,k_1$, it suffices to prove the first statement. For $|k|\geq 2|k_1|$, we have $|k\pm k_1|\geq \frac{|k|}{2}$. If we further assume $|k|\geq \frac{2}{\epsilon}$, we have $\epsilon |k-k_1|\geq \frac{\epsilon}{2}|k|\geq 1$, and therefore
\begin{equation}
    \beta \langle \epsilon (k-k_1)\rangle\leq \sqrt{2}\beta \epsilon |k-k_1|\leq \sqrt{2}\beta\epsilon (|k|+|k_1|)\leq \frac{3}{\sqrt{2}}\beta\epsilon |k|.
\end{equation}
By the triangle inequality, if $|k|\geq 10\max(\frac{1}{\epsilon},\sqrt{\frac{\beta}{\epsilon}})$,
\begin{align}
    |k-k_1||h(k,k_1)|\geq \frac{|k|}{2}\Big(\frac{\epsilon^2}{2}|k|^3 - \frac{3}{\sqrt{2}}\beta \epsilon |k|\Big) \geq \frac{\epsilon^2}{8}|k|^4. 
\end{align}
\end{proof}

\begin{proof}[proof of proposition \ref{p:51}]
Though the main idea of this proof follows closely that of \cite{takaoka1999well}, we include a full proof here to address any subtleties that rise from the fourth-order perturbation. To use the duality argument, let $w \in L^2_{k,\tau}$, $\lVert w \rVert_{L^2}=1$ and $w\geq 0$. Since
\begin{equation}
    \lVert un \rVert_{X^{s,-\frac{1}{2}}_S} = \left|\left| \frac{\langle k \rangle^s}{\langle \tau+\alpha k^2 + \epsilon^2 k^4\rangle^{1/2}}\sum_{k_1+k_2=k}\int_{\tau_1+\tau_2=\tau}\hat{u}(\tau_1,k_1)\hat{n}(\tau_2,k_2)\right|\right|_{l^2_k L^2_\tau},
\end{equation}
it suffices to estimate
\begin{equation}\label{eq:52}
    E\coloneqq\sum_{k_1+k_2-k=0}\int_{\tau_1+\tau_2-\tau=0} \frac{\langle k \rangle^s \langle k_1 \rangle^{-s}\langle k_2 \rangle^{-l}f(\tau_1,k_1)g(\tau_2,k_2)w(\tau,k)}{\langle \tau+\alpha k^2 + \epsilon^2 k^4\rangle^{1/2}\langle \tau_1+\alpha k_1^2 + \epsilon^2 k_1^4\rangle^{b_1}\langle |\tau_2| - \beta |k_2|\langle \epsilon k_2 \rangle\rangle^{b_2}},
\end{equation}
where
\begin{equation}
    f(\tau,k) = |\hat{u}(\tau,k)|\langle k \rangle^s \langle \tau + \alpha k^2 +\epsilon^2 k^4\rangle^{b_1};\: g(\tau,k) = |\hat{n}(\tau,k)|\langle k \rangle^l\langle |\tau|-\beta |k|\langle \epsilon k \rangle\rangle^{b_2},
\end{equation}
and $b_1,b_2 \leq \frac{1}{2}$. By direct computation, one can rule out $k_2=0$, and hence we assume the sum is over $k_2 \neq 0$, or equivalently, $k_1 \neq k$. 

\textbf{I.} $\max\Big(|\tau + \alpha k^2 +\epsilon^2 k^4|,|\tau_1 + \alpha k_1^2 + \epsilon^2 k_1^4|,\left||\tau_2| - \beta |k_2| \langle \epsilon k_2 \rangle\right|\Big)=|\tau + \alpha k^2 +\epsilon^2 k^4|$.

Let $b_1=b_2 =b= \frac{1}{2}-$. Applying the Cauchy-Schwarz inequality in variables $(k_1,\tau_1)$ and $(k_2,\tau_2)$, followed by the Young's inequality, it suffices to show

\begin{equation}
    \sup_{\tau,k} I \coloneqq \sup_{\tau,k} \frac{\langle k \rangle^{2s}}{\langle \tau + \alpha k^2 + \epsilon^2 k^4\rangle} \sum_{k_1\neq k} \int \frac{d\tau_1}{\langle k_1 \rangle^{2s}\langle k-k_1\rangle^{2l}\langle \tau_1 + \alpha k_1^2 + \epsilon^2k_1^4\rangle^{2b}\langle |\tau-\tau_1|-\beta |k-k_1|\langle \epsilon(k-k_1)\rangle\rangle^{2b}}<\infty,
\end{equation}
since
\begin{equation}
    E \lesssim \Big(\sup_{\tau,k}I\Big)^{1/2}\lVert u \rVert_{X^{s,b}_S}\lVert n \rVert_{X^{l,b}_W}\lVert w \rVert_{L^2_{k,\tau}}.
\end{equation}

Let $\frac{|k|}{2}\leq |k_1|\leq 2 |k|$. Integrating in $\tau_1$ via lemma \ref{l:51} and noting that $\langle k \rangle^{2s}\langle k_1 \rangle^{-2s}\simeq 1$, we have

\begin{equation}\label{eq:5.8}
    I \lesssim \sum_{k_1 \neq k; \pm} \frac{1}{\langle \tau + \alpha k^2 + \epsilon^2 k^4\rangle \langle k-k_1 \rangle^{2l}\langle \epsilon^2 k_1^4 + \alpha k_1^2+\tau \pm \beta (k-k_1)\langle \epsilon(k-k_1)\rangle\rangle^{4b-1}}.
\end{equation}

If $|k|\lesssim 1$, then $\langle k-k_1\rangle \simeq 1$, and therefore the sum above is finite by lemma \ref{l:52}. On the other hand, by lemma \ref{l:53} if $|k|\geq C(\alpha,\beta,\epsilon)$,
\begin{equation}
    I \lesssim \sum_{k_1\neq k; \pm} \frac{1}{\langle k-k_1\rangle^{2l+2}\langle  \epsilon^2 k_1^4 + \alpha k_1^2+ \tau \pm \beta (k-k_1)\langle \epsilon(k-k_1)\rangle\rangle^{4b-1}}\leq c_1.
\end{equation}
since $l\geq -1$. Now let $|k|\geq 2|k_1|$. In this region, we have $\frac{|k|}{2}\leq |k-k_1|\leq \frac{3|k|}{2}$ and by lemma \ref{l:54}, if $|k|\geq C(\alpha,\beta,\epsilon)$, then $\langle \tau + \alpha k^2 + \epsilon^2 k^4\rangle \gtrsim \langle k \rangle^4$ and
\begin{equation}
    I \lesssim \sum_{k_1\neq k; \pm} \frac{\langle k_1\rangle^{-2s}}{\langle k \rangle^{2l-2s+4}\langle \epsilon^2 k_1^4 + \alpha k_1^2 + \tau \pm \beta (k-k_1)\langle \epsilon(k-k_1)\rangle\rangle^{4b-1}}\leq c_1,
\end{equation}
since $s-l\leq 2$ and $s\geq 0$. If $|k|\lesssim 1$, then by lemma \ref{l:52}, $I \lesssim \sigma_1 \leq c_1$. Lastly if $\frac{|k_1|}{2}\geq |k|$, then $\frac{|k_1|}{2}\leq |k-k_1|\leq \frac{3|k_1|}{2}$ and by treating $|k_1|\geq C(\alpha,\beta,\epsilon)$ and $|k_1|\leq C(\alpha,\beta,\epsilon)$ separately as above, we have the desired uniform bound.

\textbf{II.} $\max\Big(|\tau + \alpha k^2 +\epsilon^2 k^4|,|\tau_1 + \alpha k_1^2 + \epsilon^2 k_1^4|,\left||\tau_2| - \beta |k_2| \langle \epsilon k_2 \rangle\right|\Big)=|\tau_1 + \alpha k_1^2 + \epsilon^2 k_1^4|$.

Arguing as above, it suffices to show
\begin{align}
    \sup_{\tau_1,k_1} II &\coloneqq \sup_{\tau_1,k_1}\frac{\langle k_1\rangle^{-2s}}{\langle \tau_1 + \alpha k_1^2 + \epsilon^2 k_1^4\rangle}\sum_{k \neq k_1}\int \frac{\langle k \rangle^{2s}d\tau}{\langle k-k_1\rangle^{2l}\langle \tau + \alpha k^2 + \epsilon^2 k^4\rangle \langle |\tau-\tau_1|-\beta |k-k_1|\langle \epsilon (k-k_1)\rangle\rangle^{2b_2}}\\
    &\lesssim \sup_{\tau_1,k_1} \frac{\langle k_1\rangle^{-2s}}{\langle \tau_1 + \alpha k_1^2 + \epsilon^2 k_1^4\rangle} \sum_{k\neq k_1; \pm}\frac{\langle k \rangle^{2s}}{\langle k-k_1\rangle^{2l}\langle \epsilon^2 k^4 + \alpha k^2 + \tau_1 \pm \beta (k_1-k)\langle \epsilon (k-k_1)\rangle\rangle^{2b_2-}},
\end{align}
where we set $b_1=\frac{1}{2}$. By lemma \ref{l:54}, if $|k|\geq 2|k_1|$ and $|k|\geq C(\alpha,\beta,\epsilon)$, then $\langle \tau_1 +\alpha k_1^2 + \epsilon^2 k_1^4\rangle \gtrsim \langle k \rangle^4$, and we have
\begin{equation}
    II \lesssim \sum_{k \neq k_1; \pm}\frac{\max(1,\langle k\rangle^{-2s})}{\langle k \rangle^{2l-2s+4}\langle \epsilon^2 k^4 + \alpha k^2 + \tau_1 \pm \beta (k_1-k)\langle \epsilon (k-k_1)\rangle\rangle^{2b_2-}}\leq c_1,
\end{equation}
and similarly, the desired uniform bound of $II$ follows if $|k|\geq 2|k_1|$ and $|k|\lesssim 1$; by applying lemma \ref{l:54} again, we can show that $II$ is uniformly bounded for $\frac{|k_1|}{2}\geq |k|$ by treating $|k|\leq C(\alpha,\beta,\epsilon)$ and $|k|\geq C(\alpha,\beta,\epsilon)$ separately. For $\frac{|k|}{2}\leq |k_1|\leq 2|k|$, we can argue as (\ref{eq:5.8}).

\textbf{III.} $\max\Big(|\tau + \alpha k^2 +\epsilon^2 k^4|,|\tau_1 + \alpha k_1^2 + \epsilon^2 k_1^4|,\left||\tau_2| - \beta |k_2| \langle \epsilon k_2\rangle\right|\Big)=\left||\tau_2| - \beta |k_2| \langle \epsilon k_2\rangle\right|$.\\

From (\ref{eq:55}), we have
\begin{equation}\label{eq:5.17}
    \left||\tau_2| - \beta |k_2| \langle \epsilon k_2\rangle\right| \gtrsim |k_2|\left|(2k-k_2)(\alpha +\epsilon^2(k^2+(k-k_2)^2))\mp \beta \langle \epsilon k_2\rangle\right|.
\end{equation}
If suffices to show $\sup\limits_{\tau_2,k_2} III <\infty$ where $b_2=\frac{1}{2}$ and
\begin{align}
    III &\coloneqq \frac{1}{\langle k_2 \rangle^{2l}\langle |\tau_2|-\beta |k_2|\langle \epsilon k_2\rangle\rangle}\sum_k \int \frac{\langle k \rangle^{2s} d\tau}{\langle k-k_2 \rangle^{2s}\langle \tau+\alpha k^2 +\epsilon^2 k^4\rangle \langle \tau-\tau_2+\alpha (k-k_2)^2 +\epsilon^2 (k-k_2)^4\rangle^{2b_1}}\\
    &\lesssim \sum_k \frac{\langle k \rangle^{2s}}{\langle k_2 \rangle^{2l}\langle k-k_2\rangle^{2s}\langle |\tau_2|-\beta |k_2|\langle \epsilon k_2\rangle\rangle \langle 4\epsilon^2 k_2 p(k)\rangle^{2b_1-}},
\end{align}
where
\begin{equation}\label{eq:5.18}
    p(k) = k^3 - \frac{3k_2}{2}k^2 + \Big(\frac{\alpha + 2\epsilon^2 k_2^2}{2\epsilon^2}\Big)k + \frac{\tau_2 - \alpha k_2^2-\epsilon^2 k_2^4}{4\epsilon^2 k_2}.
\end{equation}

If $\frac{2}{3}|k_2| \leq |k| \leq 2 |k_2|$, then $\frac{\langle k \rangle^{2s}}{\langle k_2 \rangle^{2l}\langle k-k_2\rangle^{2s}}\lesssim \frac{1}{\langle k \rangle^{2l-2s}\langle k-k_2\rangle^{2s}}$. If $|k|\lesssim 1$, $III \lesssim \sigma_2 \leq c_2$ by lemma \ref{l:52}. If $|k|\gg 1$, we argue as in lemma \ref{l:54} to obtain

\begin{equation}
    \left|(2k-k_2) (\alpha + \epsilon^2 (k^2+(k-k_2)^2)) \mp \beta \langle \epsilon k_2\rangle\right|\gtrsim |k|^3,
\end{equation}
from which, we estimate
\begin{equation}
    III \lesssim \sum_k \frac{\max (1,\langle k\rangle^{-2s})}{\langle k \rangle^{2l-2s+4}\langle k_2 p(k)\rangle^{2b_1-}}\lesssim  \sigma_2\leq c_2,
\end{equation}
by lemma \ref{l:52} and $l \geq -2$.

If $|k|\leq \frac{2}{3}|k_2|$ or $|k|\geq 2 |k_2|$, then $\frac{\langle k \rangle^{2s}}{\langle k_2 \rangle^{2l}\langle k-k_2\rangle^{2s}}\lesssim \frac{1}{\langle k_2 \rangle^{2l}}$ since $|k-k_2|\geq \frac{|k|}{2}$. As in lemma \ref{l:53}, if $|k_2|\geq C(\alpha,\beta,\epsilon)$, we have 
\begin{align}
III &\lesssim \sum_{k} \frac{1}{\langle k_2\rangle^{2l+2}\langle k_2 p(k)\rangle^{2b_1-}}\lesssim \sigma_2\leq c_2.
\end{align}
Lastly if $|k_2|\lesssim 1$ (for $|k|\leq \frac{2}{3}|k_2|$) or $|k|\lesssim 1$ (for $|k|\geq 2 |k_2|$), then $\langle k_2 \rangle \simeq 1$ and we have $III\lesssim \sigma_2 \leq c_2$, which concludes the proof of the first inequality of proposition \ref{p:51}. To show the second inequality by the duality argument, it suffices to estimate
\begin{equation}\label{eq:5.21}
    \sum_{k_1+k_2-k=0}\int_{\tau_1+\tau_2-\tau=0} \frac{\langle k \rangle^{l+\rho}\langle k_1 \rangle^{-s}\langle k_2\rangle^{-s}f(\tau_1,k_1)g(-\tau_2,-k_2)w(\tau,k)}{\langle |\tau|-\beta |k|\langle \epsilon k\rangle\rangle^{1/2}\langle \tau_1 + \alpha k_1^2+\epsilon^2k_1^4\rangle^{b_1}\langle \tau_2-\alpha k_2^2-\epsilon^2k_2^4\rangle^{b_2}},
\end{equation}
where $f(\tau,k) = |\hat{u}(\tau,k)|\langle k \rangle^s \langle \tau + \alpha k^2 +\epsilon^2 k^4\rangle^{b_1},\: g(\tau,k) = |\hat{v}(\tau,k)|\langle k \rangle^s \langle \tau + \alpha k^2 +\epsilon^2 k^4\rangle^{b_2}$ and $b_1,b_2 \leq \frac{1}{2}$. Since $\rho>0$, we take $k\neq 0$ in the sum.

\textbf{IV.} $\max\Big(\left||\tau| - \beta |k| \langle \epsilon k\rangle\right|, |\tau_1 + \alpha k_1^2 + \epsilon^2 k_1^4|,|\tau_2 - \alpha k_2^2 - \epsilon^2 k_2^4|\Big) = \left||\tau| - \beta |k| \langle \epsilon k\rangle\right|$.

In this region, the lower bound of the dispersive weight is similar to (\ref{eq:5.17}). For $b_1=b_2=b=\frac{1}{2}-$, it suffices to show
\begin{align}\label{eq:5.27}
    \sup_{\tau,k}IV &\coloneqq \sup_{\tau,k} \frac{\langle k \rangle^{2l+2\rho}}{\langle |\tau|-\beta |k|\langle \epsilon k\rangle\rangle}\sum_{k_1}\int \frac{\langle k_1 \rangle^{-2s}\langle k-k_1\rangle^{-2s} d\tau_1}{\langle \tau_1+\alpha k_1^2+ \epsilon^2 k_1^4\rangle^{2b}\langle \tau - \tau_1 - \alpha (k-k_1)^2 - \epsilon^2(k-k_1)^4\rangle^{2b}}\nonumber\\
    &\lesssim \sup_{\tau,k}\frac{\langle k \rangle^{2l+2\rho}}{\langle |\tau|-\beta |k|\langle \epsilon k\rangle\rangle}\sum_{k_1} \frac{1}{\langle k_1 \rangle^{2s}\langle k-k_1\rangle^{2s}\langle \langle k \rangle p(k_1) \rangle^{4b-1}}<\infty
\end{align}
where $p$ is defined in (\ref{eq:5.18}). If $\frac{2}{5}|k|\leq |k_1|\leq \frac{2}{3}|k|$, then $\frac{|k|}{3}\leq |k-k_1|\leq \frac{5}{3}|k|$ and $\frac{\langle k \rangle^{2l+2\rho}}{\langle k_1 \rangle^{2s}\langle k-k_1\rangle^{2s}}\lesssim \frac{1}{\langle k \rangle^{4s-2l-2\rho}}$. For $|k|\lesssim 1$, (\ref{eq:5.27}) reduces to lemma \ref{l:52}. If $|k|\geq C(\alpha,\beta,\epsilon)$ as in lemma \ref{l:53}, we have 
\begin{equation}
IV \lesssim \sum_{k_1}\frac{1}{\langle k \rangle^{4s-2l-2\rho+2}\langle \langle k \rangle p(k_1)\rangle^{4b-1}}\lesssim \sigma_2,    
\end{equation}
since $4s-2l-2\rho+2\geq 0$. If $\frac{2}{3}|k|\leq |k_1|\leq \frac{3}{2}|k|$ and $|k|\lesssim 1$, then again (\ref{eq:5.27}) reduces to lemma \ref{l:52}. If $|k|\geq C(\alpha,\beta,\epsilon)$, then as in lemma \ref{l:54},
\begin{equation}\label{eq:5.24}
    IV\lesssim \sum_{k_1} \frac{1}{\langle k \rangle^{2s-2l-2\rho+4}\langle  \langle k \rangle p(k_1) \rangle^{4b-1}}\leq \sigma_2,
\end{equation}
since $s-l\geq -2+\rho$. Similarly for $|k_1|\leq \frac{2}{5}|k|$ or $|k_1|\geq \frac{3}{2}|k|$, we treat $|k|\lesssim 1$ and $|k|\geq C(\alpha,\beta,\epsilon)$ separately where for $|k|\geq C(\alpha,\beta,\epsilon)$, we have $\langle |\tau-\beta |k|\langle \epsilon k\rangle|\rangle \gtrsim \langle k \rangle^4$, and therefore we can argue as (\ref{eq:5.24}).

\textbf{V.} $\max\Big(\left||\tau| - \beta |k| \langle \epsilon k \rangle\right|, |\tau_1 + \alpha k_1^2 + \epsilon^2 k_1^4|,|\tau_2 - \alpha k_2^2 - \epsilon^2 k_2^4|\Big) = |\tau_1 + \alpha k_1^2 + \epsilon^2 k_1^4|$.

It suffices to show
\begin{align}
    \sup_{\tau_1,k_1}V &\coloneqq \sup_{\tau_1,k_1} \frac{\langle k_1 \rangle^{-2s}}{\langle \tau_1+\alpha k_1^2+\epsilon^2 k_1^4\rangle}\sum_k \int \frac{\langle k \rangle^{2l+2\rho}\langle k-k_1\rangle^{-2s}d\tau}{\langle |\tau|-\beta |k|\langle \epsilon k \rangle\rangle\langle \tau-\tau_1 - \alpha (k-k_1)^2 -\epsilon^2 (k-k_1)^4 \rangle^{2b_2}}\\
    &\lesssim \sup_{\tau_1,k_1} \frac{\langle k_1 \rangle^{-2s}}{\langle \tau_1+\alpha k_1^2+\epsilon^2 k_1^4\rangle}\sum_{k,\pm} \frac{\langle k \rangle^{2l+2\rho}\langle k-k_1 \rangle^{-2s}}{\langle \epsilon^2 (k-k_1)^4 + \alpha (k-k_1)^2 +\tau_1 \mp \beta k \langle \epsilon k \rangle\rangle^{2b_2-}}
\end{align}
where $|\tau_1 + \alpha k_1^2 + \epsilon^2 k_1^4|\gtrsim |k|\cdot |\alpha (2k_1-k)(\alpha+\epsilon^2(k_1^2+(k-k_1)^2))\mp \beta \langle \epsilon k \rangle|$ and $b_1=\frac{1}{2}$. Note that for $\max\Big(\left||\tau| - \beta |k| \langle \epsilon k \rangle\right|, |\tau_1 + \alpha k_1^2 + \epsilon^2 k_1^4|,|\tau_2 - \alpha k_2^2 - \epsilon^2 k_2^4|\Big) = |\tau_2 - \alpha k_2^2 - \epsilon^2 k_2^4|$, the corresponding $L^\infty L^1$ estimate reduces to the current case by an appropriate change of variable.

If $|k|\leq \frac{|k_1|}{2}$ or $\frac{3|k_1|}{2}\leq |k|\leq \frac{5|k_1|}{2}$, then $|k-k_1|\gtrsim |k_1|$, and therefore $\langle k_1 \rangle^{-2s} \langle k \rangle^{2l+2\rho}\langle k-k_1\rangle^{-2s}\lesssim \langle k_1 \rangle^{-4s}\langle k \rangle^{2l+2\rho}$. Hence $V\lesssim \sigma_1$ if $|k_1|\lesssim 1$, and by lemma \ref{l:53},
\begin{equation}
V\lesssim \sum_{k,\pm} \frac{\langle k_1 \rangle^{-4s}\max (1,\langle k_1 \rangle^{2l+2\rho-2})}{\langle \epsilon^2 (k-k_1)^4 + \alpha (k-k_1)^2 +\tau_1 \mp \beta k \langle \epsilon k \rangle\rangle^{2b_2-}}\lesssim \sigma_1,
\end{equation}
if $|k_1|\geq C(\alpha,\beta,\epsilon)$. If $\frac{|k_1|}{2}\leq |k| \leq \frac{3|k_1|}{2}$, then $\langle k_1 \rangle^{-2s}\langle k \rangle^{2l+2\rho}\langle k-k_1\rangle^{-2s}\lesssim \frac{1}{\langle k \rangle^{2s-2l-2\rho}}$. If $|k|\lesssim 1$, then $V \lesssim \sigma_1$, and by lemma \ref{l:54} if $|k|\geq C(\alpha,\beta,\epsilon)$, then $V \lesssim \sigma_1$ since $2s-2l-2\rho+4 \geq 0$. A similar statement follows for $\frac{5|k_1|}{2}\leq |k|$ if $s-l \geq -2+\rho$ since $|k-k_1|\gtrsim |k|$ and $\langle \tau_1 + \alpha k_1^2 + \epsilon^2 k_1^4 \rangle \gtrsim \langle k \rangle^4$ for sufficiently large $|k|$ by lemma \ref{l:54}.
\end{proof}

We modify the examples of spacetime functions given in \cite{takaoka1999well} to give a converse statement for proposition \ref{p:51}. From the next result, it is deduced that $s \geq -1+ \frac{\rho}{2}$ is necessary for proposition \ref{p:51} to hold. It is of interest to find out whether proposition \ref{p:51} holds for $s \in [-1+\frac{\rho}{2},0)$ when $\rho \in [0,1]$. The proof of the following proposition is presented in the appendix.

\begin{proposition}\label{p:non-sharp}

Suppose $\lVert un \rVert_{X^{s,b-1}_S}\lesssim \lVert u \rVert_{X^{s,b}_S}\lVert n \rVert_{X^{l,b}_W}$ holds for all $u,n \in C^\infty_c (\mathbb{T}\times \mathbb{R})$ for some $s,l,b \in \mathbb{R}$. Then $l \geq \max(2(b-1),-2b)\geq -1$ and $s-l \leq \min(-4(b-1),4b)\leq 2$. Furthermore suppose $\lVert D^\rho (u\overline{v})\rVert_{X^{l,b-1}_W}\lesssim \lVert u \rVert_{X^{s,b}_S}\lVert v \rVert_{X^{s,b}_S}$ holds for all $u,v \in C^\infty_c(\mathbb{T}\times \mathbb{R})$ for some $s,l,b \in \mathbb{R}$, $\rho \in (0,1]$. Then $2s-l-\rho \geq \max(2(b-1),-2b) \geq -1$ and $s-l \geq \max(\rho +4(b-1),\rho-4b)\geq -2+\rho$. 
\end{proposition}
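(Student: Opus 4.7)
Since the conclusion is a necessary-conditions statement, the plan is to construct, for each of the four inequalities in each part, an explicit family $\{(u_N,n_N)\}_{N\in\mathbb{N}}$ (respectively $\{(u_N,v_N)\}$) whose Fourier transforms are indicator functions of $O(1)$-thick rectangular slabs in $(k,\tau)$-space concentrated near the dispersive characteristics, and then to let $N\to\infty$. Every Bourgain norm reduces by Parseval to a one-line computation: for a slab $\{k=k_0\}\times\{|\tau-\tau_0|\leq 1\}$, one has $\lVert\mathbf{1}_{\text{slab}}\rVert_{X^{s,b}_S}\simeq\langle k_0\rangle^s\langle\tau_0+\alpha k_0^2+\epsilon^2 k_0^4\rangle^b$, and similarly for $X^{l,b}_W$. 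The convolution $\widehat{u_N n_N}$ is essentially the indicator of the summed slab, so the LHS reduces to an expression of the same form.

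For the first estimate, the resonance identity preceding (\ref{eq:55}) selects four extremal configurations. To force $l\geq 2(b-1)$, take $\widehat{u_N}$ near $(k_1,\tau_1)=(-N,-\alpha N^2-\epsilon^2 N^4)$ and $\widehat{n_N}$ near $(k_2,\tau_2)=(2N,\,2\beta N\langle 2\epsilon N\rangle)$; since the polynomial factor $(k+k_1)(\alpha+\epsilon^2(k^2+k_1^2))$ in $h(N,-N)$ vanishes, the output at $k=N$ has Schr\"odinger modulation of order $|k_2|\cdot\beta\langle 2\epsilon N\rangle\sim N^2$, and power-counting gives $N^{s+2(b-1)}\lesssim N^{s+l}$. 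The bound $l\geq -2b$ follows from the same frequencies after translating $\tau_1$ by $\sim N^2$ so that the output modulation collapses to $O(1)$; the factor $N^{2b}$ then appears in $\lVert u_N\rVert_{X^{s,b}_S}$. The $s-l$ bounds come from the asymmetric pair $(k_1,k_2)=(0,N)$, where the biharmonic cubic term in $h(N,0)$ dominates and the output modulation is $\sim\epsilon^2 N^4$: the on-characteristic configuration gives $s-l\leq -4(b-1)$, and shifting $\widehat{u_N}$'s modulation by $\sim N^4$ gives $s-l\leq 4b$.

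For the second estimate, $\widehat{\overline{v_N}}(k,\tau)=\overline{\widehat{v_N}(-k,-\tau)}$ flips the Schr\"odinger characteristic to $\tau=+\alpha k^2+\epsilon^2 k^4$, and the output is measured against the wave weight. The four analogous constructions are: $(k_u,k_v)=(N,-N)$ with both on-characteristic produces output at $k=2N$ with wave modulation $\sim N^2$, yielding $2s-l-\rho\geq 2(b-1)$; an $N^2$-modulation shift on $u_N$ yields $2s-l-\rho\geq -2b$; $(k_u,k_v)=(N,0)$ on-characteristic produces output wave modulation $\sim N^4$, yielding $s-l\geq\rho+4(b-1)$; and an $N^4$-modulation shift on $u_N$ yields $s-l\geq\rho-4b$. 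The cleaner consequences $l\geq -1$, $s-l\leq 2$, $2s-l-\rho\geq -1$, and $s-l\geq -2+\rho$ then follow from the elementary facts $\max(2b-2,-2b)\geq -1$ and $\min(4-4b,4b)\leq 2$.

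The main obstacle is identifying the correct frequency pairs, and in particular the dichotomy between symmetric pairs like $(-N,2N)$, on which the polynomial part of $h$ cancels so the modulation scales as $N^2$, and asymmetric pairs like $(0,N)$, on which the biharmonic cubic dominates and the modulation scales as $N^4$. This dichotomy is precisely the quantum-perturbation effect: the $N^4$ configurations produce the $4(b-1)$ and $4b$ scalings in the $s-l$ constraints, whereas the classical Zakharov analysis of \cite{takaoka1999well} only involves the $N^2$ regime. Once the slab configurations are chosen, each verification is a straightforward power count in $N$.
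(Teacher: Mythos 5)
Your proposal is correct and follows essentially the same route as the paper: the appendix proof exhibits exactly the eight configurations you describe (single-frequency bumps at $k_1=-N$, $k_2=2N$ and at $k_1=0$, $k_2=N$ for the first estimate, and the pairs $(N,-N)$ and $(0,N)$ for the second, each in an on-characteristic version and a modulation-shifted version), and the necessary conditions follow by the same power counting in $N$. Your identification of the $N^2$ versus $N^4$ modulation dichotomy matches the roles of $h(N,-N)$ and $h(N,0)$ in the paper's examples.
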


\section{Global well-posedness and semi-classical limit.}\label{gwp}

Here our goal is to extend the local solutions obtained in the previous section. For simplicity, fix $\alpha=\beta=1$. While Guo-Zhang-Guo \cite{guo2013global} used the energy method and a compactness argument to derive global well-posedness results on $\mathbb{R}^d$ for $d=1,2,3$ for initial data with integer Sobolev regularity, we extend their results to the compact domain $\mathbb{T}$ for all initial data in certain fractional Sobolev spaces with improved bounds. We show

\begin{theorem}\label{th:main2}
If $(u_0,n_0,n_1) \in\Omega_G$, then the unique local solution obtained in theorem \ref{th:main1} can be extended to a global solution. More precisely, there exists $(u,n,\partial_t n) \in C_{loc}([0,\infty),H^{s,l})$ that satisfies (\ref{eq:main}) such that for all $T>0$, $(u,n,\partial_t n)$ is a unique solution in $Y^s_{S,T}\times Y^l_{W,T}\times Y^{l-2}_{W,T}$.
\end{theorem}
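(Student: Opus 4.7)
The plan is a two-level bootstrap. First I would establish global well-posedness at the Hamiltonian regularity $(s,l)=(2,1)$ using the conservation laws \eqref{conservation}. Then for $(s,l)\in \Omega_G$ with $s+l\geq 4$---which, together with $s-l\in[0,2]$, forces $s\geq 2$ and $l\geq 1$, so $H^{s,l}\hookrightarrow H^{2,1}$---I would propagate the extra regularity via an energy-type differential inequality and Gronwall. In both cases the resulting a priori bound is fed back into Theorem \ref{th:main1} (whose local existence time is a decreasing function of the initial $H^{s,l}$-norm) to extend the local flow to $[0,\infty)$.

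For the energy case, after reducing to mean-zero $n_0,n_1$ via \eqref{variable}, mass conservation fixes $\lVert u(t)\rVert_{L^2}$, and the Hamiltonian $H$ decomposes into positive quadratic terms plus the cubic $\int n|u|^2$. I would bound the cubic by the one-dimensional Gagliardo--Nirenberg inequality on $\mathbb{T}$,
\begin{equation*}
\Bigl|\int n|u|^2\Bigr|\leq \lVert n\rVert_{L^2}\lVert u\rVert_{L^4}^2\lesssim \lVert n\rVert_{L^2}\lVert u\rVert_{L^2}^{3/2}\lVert u\rVert_{H^1}^{1/2},
\end{equation*}
and absorb it (via Young and mass conservation) into the positive terms $\alpha\lVert \partial_x u\rVert_{L^2}^2+\tfrac12\lVert n\rVert_{L^2}^2$ of $H$, yielding a uniform-in-time bound on $\lVert u\rVert_{H^1}$ and $\lVert n\rVert_{L^2}$. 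The $\epsilon^2$-weighted terms of $H$ then upgrade this to $\lVert u\rVert_{H^2}$ and $\lVert n\rVert_{H^1}$ (at the cost of $\epsilon$-dependence), and the $\frac{1}{\beta^2}\lVert \partial_t n\rVert_{\dot H^{-1}}^2$ contribution controls $\lVert \partial_t n\rVert_{H^{-1}}$. Iterating Theorem \ref{th:main1} then extends the flow globally in $H^{2,1}$.

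For the persistence step, start from the global $H^{2,1}$-solution and define
\begin{equation*}
E(t):=\lVert u(t)\rVert_{H^s}^2+\lVert n(t)\rVert_{H^l}^2+\lVert \partial_t n(t)\rVert_{H^{l-2}}^2.
\end{equation*}
Differentiating along the flow \eqref{eq:main} and invoking fractional Leibniz/Kato--Ponce, I would estimate $\lVert un\rVert_{H^s}\lesssim \lVert n\rVert_{L^\infty}\lVert u\rVert_{H^s}+\lVert u\rVert_{L^\infty}\lVert n\rVert_{H^s}$ (the second factor legitimate because $s\geq l$) together with an analogous bound on $\lVert \partial_{xx}(|u|^2)\rVert_{H^{l-2}}$. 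The low-order factors $\lVert u\rVert_{L^\infty}, \lVert n\rVert_{L^\infty}$ are uniformly controlled via the embeddings $H^2, H^1\hookrightarrow L^\infty$ on $\mathbb{T}$ and the $H^{2,1}$-bound from the first step, giving a linear differential inequality $E'(t)\leq C_0 E(t)$. Gronwall yields exponential growth on every compact interval, and reapplying Theorem \ref{th:main1} with this a priori bound produces the global $H^{s,l}$-solution; uniqueness in $Y^s_{S,T}\times Y^l_{W,T}\times Y^{l-2}_{W,T}$ on each $[0,T]$ comes from patching together the local uniqueness.

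The hard part will be closing the energy inequality at non-integer $(s,l)$: commutators of fractional derivatives with the bilinear terms must be handled cleanly, and the wave-side forcing $\partial_{xx}(|u|^2)$ costs two derivatives, forcing one to match $\lVert n\rVert_{H^l}$ estimates against $u$ at regularity $\geq l$. This is precisely what the conditions $s\geq l$ and $s+l\geq 4$ are arranged to permit---the former aligning Schr\"odinger and wave regularities, the latter placing us safely above $H^{2,1}$ so that $L^\infty$ factors are free---while $s-l\leq 2$ keeps us inside $\Omega_L$ so that Theorem \ref{th:main1} applies at every iteration.
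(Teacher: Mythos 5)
Your first step (global $H^{2,1}$ well-posedness via mass/energy conservation and Gagliardo--Nirenberg, then re-iterating the local theory) matches the paper. The gap is in the persistence step. The Gronwall inequality $E'(t)\leq C_0E(t)$ does not close when $s>l$, which covers most of $\Omega_G$: Kato--Ponce gives $\lVert un\rVert_{H^s}\lesssim\lVert n\rVert_{L^\infty}\lVert u\rVert_{H^s}+\lVert u\rVert_{L^\infty}\lVert n\rVert_{H^s}$, and the second term requires $n$ at regularity $s$, whereas the wave component is only propagated in $H^l$ with $l$ as low as $s-2$. Since $s\geq l$ makes $\lVert\cdot\rVert_{H^s}$ the \emph{stronger} norm, $\lVert n\rVert_{H^s}$ is not controlled by $E(t)$ --- your parenthetical ``legitimate because $s\geq l$'' has the inequality backwards, and no rearrangement of the fractional Leibniz rule avoids it: a one-dimensional product estimate $\lVert un\rVert_{H^s}\lesssim\lVert u\rVert_{H^s}\lVert n\rVert_{H^\sigma}$ requires $\sigma\geq s$. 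A second, smaller defect: differentiating $\lVert n\rVert_{H^l}^2$ in time produces $\langle\langle\nabla\rangle^l n,\langle\nabla\rangle^l\partial_t n\rangle$, which needs $\partial_t n\in H^l$ rather than the available $H^{l-2}$; one must instead work with the wave energy $\lVert\partial_t n\rVert_{H^{l-2}}^2+\lVert\partial_x n\rVert_{H^{l-2}}^2+\epsilon^2\lVert\partial_{xx}n\rVert_{H^{l-2}}^2$, whose time derivative cancels against the equation and leaves only the forcing term.

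The paper's way around the first obstruction is structural and you would need some version of it: instead of estimating $u$ directly in $H^s$, differentiate the Schr\"odinger equation in time and run the energy estimate on $\partial_t u$ at the much lower regularity $H^{s-4}$, where the products $\partial_t u\cdot n$ and $u\cdot\partial_t n$ do close against $\lVert n\rVert_{H^{l}}$ and $\lVert\partial_t n\rVert_{H^{l-2}}$ (using Lemma \ref{sobinequality} in the intermediate range where $|s-4|\leq\tfrac12$); then recover $\lVert u\rVert_{H^s}$ from $\lVert\partial_t u\rVert_{H^{s-4}}$ and $\lVert un\rVert_{H^{s-4}}$ through the elliptic identity $\Delta u=\langle\epsilon\nabla\rangle^{-2}(-i\partial_t u+un)$, i.e.\ exploit the biharmonic term to trade one time derivative for four space derivatives. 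Even then the argument is not a single Gronwall but an induction in increments of $l_0=s-l$ (base case $s_0=2+\frac{l_0}{2}$ supplied by conservation of energy), because the wave-side estimate needs $\lVert u\rVert_{H^{l}}$ as input before it can output control of $\lVert n\rVert_{H^l}$. Your closing paragraph correctly identifies where the difficulty lies, but the mechanism you propose does not resolve it.
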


To fully exploit the conservation law (\ref{conservation}), we assume that $n_0,n_1$ are of mean zero. Recall that both $n$ and $\partial_t n$, assumed to be sufficiently regular, are of mean zero whenever they are defined, and thus $H[u,n,\partial_t n](t) = H[u,n,\partial_t n](0) = H_0<\infty$. In fact, the nonlinear part of energy is bounded above by the linear part by Gagliardo-Nirenberg inequality. Once we establish a global solution for mean zero data, then we invoke the change of variable (\ref{variable}) to conclude that such global extension holds without the mean zero assumption. 

To discuss the semi-classical limit to ZS, we denote $(u^\epsilon,n^\epsilon,\partial_t n^\epsilon)$ by the QZS flow generated by $(u_0^\epsilon,n_0^\epsilon,n_1^\epsilon)$ for $\epsilon \geq 0$. Given a solution $(u^\epsilon,n^\epsilon,\partial_t n^\epsilon)$, we denote $H^\epsilon$ by the corresponding energy and $H^\epsilon_0$ by $H^\epsilon$ at $t=0$. We remark

\begin{remark}\label{semiclassical}
Let $s\geq 4$. If $\sup\limits_{\epsilon}\lVert (u_0^\epsilon,n_0^\epsilon,n_1^\epsilon)\rVert_{H^{s,s-1}}\leq R<\infty$ and $(u_0^0,n_0^0,n_1^0) \in H_0^{s,s-1}$ where $(u_0^\epsilon,n_0^\epsilon,n_1^\epsilon)\xrightarrow[\epsilon\rightarrow 0]{H_0^{s-2,s-3}}(u_0^\epsilon,n_0^\epsilon,n_1^\epsilon)$, then $(u^\epsilon,n^\epsilon,\partial_t n^\epsilon)\xrightarrow[\epsilon \rightarrow 0]{}(u^0,n^0,\partial_t n^0)$ in $C([0,T],H_0^{s-2,s-3})$.  
\end{remark}

It is crucial to obtain a uniform bound on $(u^\epsilon,n^\epsilon,\partial_t n^\epsilon)$ that depends only on $R,T$ (see the next lemma), after which one can adopt the proof of \parencite[theorem 1.3]{guo2013global} to prove remark \ref{semiclassical}.

\begin{lemma}\label{uniformbound}
Let $(s,l)\in \Omega_G$ and $\sup\limits_{\epsilon>0}\lVert (u_0^\epsilon,n_0^\epsilon,n_1^\epsilon)\rVert_{H^{s,l}}\leq R$ for some $R>0$. Then $\sup\limits_{\epsilon>0}\sup\limits_{t\in [0,\infty)}\lVert (u^\epsilon,n^\epsilon,\partial_t n^\epsilon)\rVert_{H_0^{1,0}}\leq C(R)$. If $s \geq 4$, then $\sup\limits_{\epsilon>0}\lVert (u^\epsilon,n^\epsilon,\partial_t n^\epsilon)\rVert_{C_T H_0^{s^\prime,s^\prime-1}}\leq C(T,R)$ for all $1\leq s^\prime \leq s-2$ and $T>0$.
\end{lemma}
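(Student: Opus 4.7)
The plan is to address the two parts of the lemma in sequence. Part 1 (the $\epsilon$-uniform $H_0^{1,0}$-bound) follows directly from the conservation laws (\ref{conservation}); Part 2 (higher regularity) follows by iterative high-order energy estimates on the global solutions furnished by Theorem \ref{th:main2}, closed via Gronwall's inequality and exploiting the two-derivative slack $s' \leq s-2$.

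For Part 1, both $M^\epsilon(t) = \|u_0^\epsilon\|_{L^2}^2$ and $H^\epsilon(t) = H_0^\epsilon$ are conserved. The cubic coupling $\int n|u|^2$ in $H^\epsilon$ is absorbed into the linear part via the 1D Gagliardo--Nirenberg inequality $\|u\|_{L^4}^2 \lesssim \|u\|_{L^2}^{3/2}\|u\|_{H^1}^{1/2}$ followed by Young's inequality:
\begin{align*}
    \left|\int n|u|^2\right| \;\leq\; \|n\|_{L^2}\|u\|_{L^4}^2 \;\leq\; \tfrac{\alpha}{2}\|\partial_x u\|_{L^2}^2 + \tfrac{1}{4}\|n\|_{L^2}^2 + C(\|u_0^\epsilon\|_{L^2}),
\end{align*}
so rearranging $H^\epsilon$ yields $\|\partial_x u\|_{L^2}^2 + \|n\|_{L^2}^2 + \|\partial_t n\|_{\dot H^{-1}}^2 \lesssim H_0^\epsilon + C(M_0^\epsilon)$. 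Since $\Omega_G \subset \{s \geq 2,\, l \geq 1\}$ and $\epsilon \leq 1$, the weighted summands $\epsilon^2\|\partial_{xx}u_0^\epsilon\|_{L^2}^2$ and $\epsilon^2\|\partial_x n_0^\epsilon\|_{L^2}^2$ in $H_0^\epsilon$ are each bounded by $R^2$, giving $\sup_\epsilon H_0^\epsilon \leq C(R)$. The mean-zero reduction (\ref{variable}) identifies $\dot H^{-1}$ with $H^{-1}$ on $\partial_t n$, completing Part 1; as a byproduct one also obtains $\epsilon$-uniform control of $\epsilon\|\partial_{xx}u\|_{L^2}$ and $\epsilon\|\partial_x n\|_{L^2}$.

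For Part 2, I induct on integer levels $k = 2,\,3,\,\ldots,\,\lfloor s \rfloor - 2$ with base case $k = 1$ supplied by Part 1, recovering non-integer $s' \in [1, s-2]$ at the end via Sobolev interpolation between adjacent integer levels. At each level $k$, define
\begin{align*}
    E_k(t) \;=\; \|u(t)\|_{H^k}^2 + \|n(t)\|_{H^{k-1}}^2 + \tfrac{1}{\beta^2}\|\partial_t n(t)\|_{H^{k-2}}^2 + \epsilon^2\|n(t)\|_{H^k}^2,
\end{align*}
where the last $\epsilon$-weighted summand arises naturally from the wave energy and contributes coercively. Applying $\langle D\rangle^k$ to the Schr\"odinger equation and $\langle D\rangle^{k-2}$ to the wave equation in (\ref{eq:main}) yields
\begin{align*}
    \tfrac{d}{dt}E_k \;=\; 2\,\mathrm{Im}\!\int [\langle D\rangle^k, n]u \cdot \overline{\langle D\rangle^k u} \;+\; 2\!\int \partial_{xx}\langle D\rangle^{k-2}(|u|^2)\cdot \langle D\rangle^{k-2}\partial_t n,
\end{align*}
where the diagonal $\int n|\langle D\rangle^k u|^2$ vanishes by reality of $n$. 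The wave source is controlled by the fractional Leibniz rule as $\|u\|_{H^k}\|u\|_{L^\infty}\|\partial_t n\|_{H^{k-2}} \lesssim_R E_k$, using the $H^1$-bound from Part 1 and 1D Sobolev embedding.

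The main obstacle is the Schr\"odinger commutator, which by Kato--Ponce satisfies $\|[\langle D\rangle^k, n]u\|_{L^2} \lesssim \|n\|_{H^k}\|u\|_{L^\infty} + \|\partial_x n\|_{L^\infty}\|u\|_{H^{k-1}}$; the first summand asks for $\|n\|_{H^k}$, one derivative above what $E_k$ itself controls for $n$. To close the estimate I would replace $E_k$ by a modified energy $\tilde E_k = E_k + c\,\mathrm{Re}\int (\langle D\rangle^{k-1} n)\, u\, \overline{\langle D\rangle^{k-1} u}$ with $c$ chosen so that upon differentiating in $t$ and substituting the evolution equations, the high-frequency part of the commutator cancels, leaving a right-hand side dominated by $\tilde E_k$ together with lower-order quantities controlled by the inductive hypothesis $E_{k-1}(t) \leq C(T,R)$. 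Because no dispersive smoothing is invoked and all $\epsilon$-weighted terms carry favorable signs, this yields $\tfrac{d}{dt}\tilde E_k \leq C(T,R)(1+\tilde E_k)$ uniformly in $\epsilon$, and Gronwall's inequality on $[0,T]$ then delivers the required $\epsilon$-uniform bound $E_k(t) \leq C(T,R)$, closing the induction.
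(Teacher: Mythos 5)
Your Part 1 is correct and is essentially the paper's argument: uniform control of $H_0^\epsilon$ from the hypothesis (using $\epsilon\leq 1$ and $s\geq 2$, $l\geq 1$ on $\Omega_G$), then Gagliardo--Nirenberg plus Young to absorb $\int n|u|^2$ into the coercive part of the energy.

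Part 2, however, departs from the paper and contains a genuine gap. The paper does \emph{not} run a direct $H^{s'}$ energy estimate on $u$; it differentiates the Schr\"odinger equation in time, closes a Gronwall estimate for $\lVert \partial_t u^\epsilon\rVert_{H^{s'-2}}$ coupled with the wave energy at level $H^{s'-2}$, and then recovers $\lVert u^\epsilon\rVert_{H^{s'}}$ from the equation itself via the $\epsilon$-uniform elliptic bound $\lVert u^\epsilon\rVert_{\dot H^{s'}}\leq \lVert \langle\epsilon\nabla\rangle^2\Delta u^\epsilon\rVert_{H^{s'-2}}$. This is precisely why the statement loses two derivatives ($s'\leq s-2$): bounding $\partial_t u^\epsilon(0)=i(\alpha\partial_x^2-\epsilon^2\partial_x^4)u_0^\epsilon-iu_0^\epsilon n_0^\epsilon$ in $H^{s'-2}$ uniformly in $\epsilon$ costs $u_0^\epsilon\in H^{s'+2}$. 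Your route, if it closed, would need no such loss --- a warning sign. Concretely, your modified-energy cancellation is asserted but not verified, and it faces two obstructions. First, when $\partial_t$ falls on $u,\bar u$ in the correction $\mathrm{Re}\int(\langle D\rangle^{k-1}n)\,u\,\overline{\langle D\rangle^{k-1}u}$ and you substitute the equation, the $\alpha\partial_x^2$ part can indeed be integrated by parts into something resembling the bad commutator term, but the biharmonic part produces contributions of the schematic form $\epsilon^2\int(\langle D\rangle^{k}n)\,\partial_x^3 u\,\overline{\langle D\rangle^{k-1}u}$, which require $\epsilon$-weighted norms such as $\epsilon\lVert u\rVert_{H^{k+2}}$ that are absent from $E_k$ and not controlled uniformly in $\epsilon$; adding $\epsilon^2\lVert u\rVert_{H^{k+2}}^2$ to the energy only cascades the derivative mismatch to higher order. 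Second, when $\partial_t$ falls on $n$ in the correction you get $\int(\langle D\rangle^{k-1}\partial_t n)\,u\,\overline{\langle D\rangle^{k-1}u}$, which by duality needs roughly $\lVert\partial_t n\rVert_{H^{k-2}}\lVert u\rVert_{H^{3/2+}}\lVert u\rVert_{H^{k}}$; at the base level $k=2$ the factor $\lVert u\rVert_{H^{3/2+}}$ is not supplied by Part 1 and interpolation yields a superlinear differential inequality $\frac{d}{dt}\tilde E_2\lesssim \tilde E_2^{5/4+}$, for which Gronwall does not give a bound on all of $[0,T]$. To repair the proof, replace the modified energy by the paper's scheme: estimate $\partial_t u^\epsilon$ and $(\partial_x n^\epsilon,\partial_t n^\epsilon,\epsilon\partial_{xx}n^\epsilon)$ at level $H^{s'-2}$, use the product estimates (including Lemma \ref{sobinequality} in the intermediate range of exponents) together with the inductive hypothesis, and recover $\lVert u^\epsilon\rVert_{H^{s'}}$ from $\Delta u^\epsilon=\langle\epsilon\nabla\rangle^{-2}(-i\partial_t u^\epsilon+u^\epsilon n^\epsilon)$.
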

We observe that the argument in \cite{guo2013global} in studying the $\epsilon\rightarrow 0$ problem on $\mathbb{R}^d$ applies to $\mathbb{T}$ as well with certain subtleties, which we clarify in the concluding remarks. Now we state a useful Sobolev space inequality:

\begin{lemma}\label{sobinequality}
Let $d \in \mathbb{N},\:s\in [-\frac{d}{2},\frac{d}{2}]$ and consider $H^s(M)$ where $M = \mathbb{R}^d,\mathbb{T}^d$. Then 
\begin{equation*}
\lVert f g \rVert_{H^{s}}\lesssim_{d,s} \lVert f \rVert_{H^{\frac{d}{2}+}}\lVert g \rVert_{H^s}.    
\end{equation*}
\end{lemma}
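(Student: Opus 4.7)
The plan is to prove the estimate via Littlewood–Paley decomposition and Bony's paraproduct, handling $\mathbb{T}^d$ and $\mathbb{R}^d$ uniformly since the analysis depends only on the dyadic structure of frequency space. Write $fg = \Pi_{LH}(f,g) + \Pi_{HL}(f,g) + \Pi_{HH}(f,g)$, where $\Pi_{LH} = \sum_k (P_{<k-2} f)(P_k g)$ is the low–high paraproduct, $\Pi_{HL}$ is the high–low piece with roles of $f,g$ swapped, and $\Pi_{HH} = \sum_{|k-k'|\leq 2}(P_k f)(P_{k'}g)$ is the high–high remainder. I will estimate each piece in $H^s$ separately.

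For $\Pi_{LH}$, the output has dyadic frequency $\sim 2^k$, so by Plancherel and the almost-orthogonality of the pieces, $\|\Pi_{LH}\|_{H^s}^2 \sim \sum_k 2^{2ks}\|(P_{<k-2}f)(P_k g)\|_{L^2}^2$, which is bounded by $\|f\|_{L^\infty}^2 \|g\|_{H^s}^2 \lesssim \|f\|_{H^{d/2+}}^2 \|g\|_{H^s}^2$ via Sobolev embedding $H^{d/2+}\hookrightarrow L^\infty$. For $\Pi_{HL}$, the output again has frequency $\sim 2^k$, and I use $\|P_{<k-2} g\|_{L^\infty} \lesssim \sum_{j<k-2} 2^{j(d/2-s)}\|P_j g\|_{L^2}\cdot 2^{js}\cdot 2^{-js} \lesssim 2^{k(d/2-s)}\|g\|_{H^s}$, which is valid precisely because $s\leq d/2$ so the geometric sum is dominated by its largest term. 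Combined with Bernstein giving $\|P_k f\|_{L^2}\lesssim 2^{-k(d/2+\epsilon)}c_k^f$, where $(c_k^f)\in \ell^2$ with $\ell^2$-norm $\sim \|f\|_{H^{d/2+\epsilon}}$, the product estimate closes.

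The main obstacle is the high–high term $\Pi_{HH}$, which is classically the delicate piece because the output frequency is not localized: a pair $(P_k f)(P_k g)$ contributes to every output frequency scale $2^m \lesssim 2^k$. I plan to use Bernstein $\|P_m[(P_k f)(P_k g)]\|_{L^2}\lesssim 2^{md/2}\|(P_k f)(P_k g)\|_{L^1}\lesssim 2^{md/2}\|P_k f\|_{L^2}\|P_k g\|_{L^2}$, then apply $\|P_k f\|_{L^2}\lesssim 2^{-k(d/2+\epsilon)}c_k^f$ and $\|P_k g\|_{L^2}\lesssim 2^{-ks}c_k^g$. The critical step is verifying $2^{m(d/2+s)}2^{-k(d/2+\epsilon+s)}$ is summable in $k\geq m$ and then in $m$: summability in $k$ requires $d/2+\epsilon+s>0$ (hence $s>-d/2$, consistent with the hypothesis), while summability in $m$ after Cauchy–Schwarz requires $d/2+s \geq 0$. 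On $\mathbb{T}^d$, the restriction to $m\geq 0$ removes the low-frequency divergence that would otherwise occur, so the argument closes for the entire range $s\in[-d/2,d/2]$.

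The final step is to combine the three bounds and absorb all implicit constants into the $\lesssim_{d,s}$, noting that the distinction between $\mathbb{R}^d$ and $\mathbb{T}^d$ enters only through the choice of Littlewood–Paley projectors (continuous Fourier integral vs. Fourier series) but leaves the $L^p$-Bernstein inequalities and Sobolev embedding untouched.
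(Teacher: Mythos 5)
Your argument is correct, but it takes a genuinely different route from the paper. The paper splits into three cases on the sign of $s$: for $s=0$ it uses H\"older plus the embedding $H^{\frac{d}{2}+}\hookrightarrow L^\infty$; for $s<0$ it dualizes, $\lVert fg\rVert_{H^s}=\sup_{\lVert h\rVert_{H^{-s}}=1}|\langle fg,h\rangle|$, to reduce to the positive case; and for $s>0$ it invokes the fractional Leibniz rule $\lVert fg\rVert_{H^s}\lesssim \lVert f\rVert_{W^{s,q}}\lVert g\rVert_{L^r}+\lVert f\rVert_{L^\infty}\lVert g\rVert_{H^s}$ together with a pair of Sobolev embeddings $H^{\frac{d}{2}+}\hookrightarrow W^{s,q}$, $H^s\hookrightarrow L^r$ chosen via $\frac{1}{2}=\frac{1}{q}+\frac{1}{r}$. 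Your paraproduct proof avoids the case split and the Kato--Ponce-type Leibniz rule (which is itself usually proved by paraproducts), at the cost of running the full Littlewood--Paley machinery by hand; it is more self-contained and makes transparent exactly where each endpoint of $s\in[-\frac{d}{2},\frac{d}{2}]$ and the extra $\epsilon$ in $H^{\frac{d}{2}+}$ are used, which the paper's appeal to black-box embeddings obscures. One small imprecision: in $\Pi_{HL}$, the geometric sum $\sum_{j<k-2}2^{j(\frac{d}{2}-s)}$ is dominated by its last term only for $s<\frac{d}{2}$ strictly; at $s=\frac{d}{2}$ it produces a factor of $k$, which is harmless only because it is absorbed by the $2^{-k\epsilon}$ coming from $\lVert P_k f\rVert_{L^2}\lesssim 2^{-k(\frac{d}{2}+\epsilon)}c_k^f$ --- worth stating explicitly. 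Likewise at $s=-\frac{d}{2}$ the $k$-summability in $\Pi_{HH}$ holds only because $\frac{d}{2}+\epsilon+s=\epsilon>0$, so the hypothesis $s\geq-\frac{d}{2}$ (not $s>-\frac{d}{2}$) is indeed covered, again by the $\epsilon$; your parenthetical slightly misstates this. Neither point is a gap, just places where the endpoints deserve one more line.
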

\begin{proof}
If $s=0$, the statement follows from the H\"older's inequality and the Sobolev embedding $H^{\frac{d}{2}+}\hookrightarrow L^\infty$. If $s<0$, then
\begin{equation}
    \lVert fg \rVert_{H^s} = \sup_{\lVert h \rVert_{H^{-s}}=1} |\langle fg,h\rangle| \leq \lVert g \rVert_{H^s} \sup_{\lVert h \rVert_{H^{-s}}=1} \lVert fh \rVert_{H^{-s}},
\end{equation}
and hence it suffices to show the statement for $s>0$. By the Leibniz's rule,
\begin{align}
    \lVert fg \rVert_{H^s} \lesssim \lVert f \rVert_{W^{s,q}} \lVert g \rVert_{L^r} + \lVert f \rVert_{L^\infty} \lVert g \rVert_{H^s},
\end{align}
where $q\in [2,\infty), r \in (2,\infty]$ are to be determined. The second term is bounded above by $\lVert f \rVert_{H^{\frac{d}{2}+}}\lVert g \rVert_{H^s}$ again by the Sobolev embedding. To obtain
\begin{align}\label{sobemb}
    H^{\frac{d}{2}+}\hookrightarrow W^{s,q},\: H^s \hookrightarrow L^r,
\end{align}
it suffices to have
\begin{align}
    \frac{1}{2}-\frac{1}{q}< \frac{(d/2+)-s}{d},\: \frac{1}{2}-\frac{1}{r}<\frac{s}{d}
\end{align}
and by noting $\frac{1}{2}=\frac{1}{q}+\frac{1}{r}$, we can pick $r \in (2,\infty]$ such that
\begin{align}
\frac{1}{2}-\frac{s}{d}<\frac{1}{r}< \frac{(d/2+)-s}{d}, 
\end{align}
which uniquely determines $q\in [2,\infty)$, and therefore validates (\ref{sobemb}).
\end{proof}

\begin{proof}[proof of theorem \ref{th:main2}]
Assume $(s,l)=(2,1)$. By the Gagliardo-Nirenberg inequality
\begin{equation}
\lVert f \rVert_{L^4}^4 \lesssim \lVert \partial_x f \rVert_{L^2} \lVert f \rVert_{L^2}^3,    
\end{equation}
(\ref{conservation}), and the Young's inequality, we have
\begin{align}\label{nonlinearenergy}
    \left|\int n|u|^2\right| \leq \frac{1}{4}\lVert n \rVert_{L^2}^2 + \frac{\epsilon^2}{2}\lVert \partial_x u \rVert_{L^2}^2 + C(\lVert u_0 \rVert_{L^2},\epsilon),
\end{align}
and since 
\begin{align}\label{bound1}
    \lVert u(t)\rVert_{H^2}^2 + \lVert n(t)\rVert_{H^1}^2 + \lVert \partial_t n(t)\rVert_{H^{-1}}^2 &\lesssim \lVert u_0\rVert_{L^2}^2 + \lVert \partial_{xx} u(t)\rVert_{L^2}^2 + \lVert n(t)\rVert_{L^2}^2 + \lVert \partial_x n \rVert_{L^2}^2 + \lVert \partial_t n(t)\rVert_{\dot{H}^{-1}}^2\\
    &\lesssim \lVert u_0 \rVert_{L^2}^2+(1+\epsilon^{-2})|H_0|+(1+\epsilon^{-2})\left|\int n|u|^2\right|\nonumber,
\end{align}
by using the bound (\ref{nonlinearenergy}) to absorb $\lVert n \rVert_{L^2}, \lVert \partial_x u \rVert_{L^2}$ to the LHS of (\ref{bound1}), we have $\lVert(u,n,\partial_t n) \rVert_{H^{2,1}}\leq C(\epsilon)$ for all $t\in \mathbb{R}$ where $C(\epsilon)\xrightarrow[\epsilon \rightarrow 0]{}\infty$. For $\epsilon>0$, this yields a global solution for $(s,l)=(2,1)$.

Let $(s,l)\in \Omega_G\setminus \left\{(2,1)\right\}$ and denote $l=s-l_0$ for $0\leq l_0 \leq 2$, where since $s+l\geq 4$, we have
\begin{align}\label{admissibleglobal}
s\geq 2+\frac{l_0}{2}\geq 2,\: l \geq 2-\frac{l_0}{2}\geq 1.
\end{align}

With $a= l-2$, multiply $\langle \nabla \rangle^{2a} \partial_t n$ to the second equation of (\ref{eq:main}) and integrate by parts to obtain

\begin{align}\label{energyestwave}
\frac{1}{2}\frac{d}{dt}\Big( \lVert \partial_t n \rVert_{H^a}^2 + \lVert \partial_x n\rVert_{H^a}^2+ \epsilon^2 \lVert \partial_{xx} n\rVert_{H^a}^2\Big) &= \int (\langle \nabla \rangle^a \partial_t n) (\langle \nabla \rangle^a \partial_{xx}|u|^2)\nonumber\\
    &\lesssim_a \lVert \partial_t n\rVert_{H^a}^2 + \lVert u\rVert_{H^{a+2}}^2.    
\end{align}

We first assume $l_0>0$. For $T>0$, we make the following inductive hypothesis:
\begin{align}\label{induct1}
\lVert u \rVert_{H^l}\leq C(T,l_0,\epsilon)<\infty,
\end{align}
from which Gronwall's inequality on (\ref{energyestwave}) yields
\begin{align}
      \lVert \partial_t n \rVert_{H^a}^2 + \lVert \partial_x n\rVert_{H^a}^2+ \epsilon^2 \lVert \partial_{xx} n\rVert_{H^a}^2 \leq C(T),   
\end{align}
which, together with the conservation of energy, controls $\lVert n \rVert_{H^l}$.

Now take $\partial_t$ of the first equation of (\ref{eq:main}), multiply the resulting equation by $-i \langle \nabla \rangle^{2b} \overline{\partial_t u}$, integrate by parts, and take its real part to obtain

\begin{align}\label{energyinequaltiy2}
    \frac{1}{2}\frac{d}{dt}\lVert \partial_t u\rVert_{H^b}^2 &= Im \int \Big( \langle \nabla \rangle^b \overline{\partial_t u}\Big) \Big(\langle \nabla \rangle^b (\partial_t u\cdot n + u\cdot \partial_t n) \Big)\nonumber\\
    &\leq \lVert \partial_t u \rVert_{H^b} (\lVert \partial_t u \cdot n\rVert_{H^b}+\lVert u\cdot \partial_t n\rVert_{H^b}).
\end{align}
We re-write the first equation of (\ref{eq:main}),
\begin{align}
    \Delta u &= \langle \epsilon \nabla \rangle^{-2} (-i\partial_t u + un),
\end{align}
and let $b=s-4$. Further, note that $\lVert \Delta u \rVert_{H^{s-2}}$ controls $\lVert u \rVert_{H^s}$ by mass conservation. We claim
\begin{align}
    \lVert \partial_t u \cdot n\rVert_{H^b}\lesssim \lVert \partial_t u \rVert_{H^b}\lVert n \rVert_{H^l}.
\end{align}
If $s>\frac{9}{2}$, then $b>\frac{1}{2}$ and we have
\begin{align}
    \lVert \partial_t u \cdot n \rVert_{H^b} \lesssim \lVert \partial_t u \rVert_{H^b} \lVert n \rVert_{H^b} \leq \lVert \partial_t u \rVert_{H^b}\lVert n \rVert_{H^{s-4}}.
\end{align}
If $s<\frac{7}{2}$, then $-b > \frac{1}{2}$ and we have
\begin{align}
    \lVert \partial_t u \cdot n \rVert_{H^b} &= \sup_{\lVert \phi \rVert_{H^{-b}}=1} |\langle \partial_t u \cdot n, \phi \rangle| \leq \lVert \partial_t u \rVert_{H^b} \sup_{\lVert \phi \rVert_{H^{-b}}=1} \lVert n \phi \rVert_{H^{-b}} \lesssim \lVert \partial_t u\rVert_{H^b} \lVert n \rVert_{H^{-b}} \leq \lVert \partial_t u\rVert_{H^b} \lVert n \rVert_{H^{l}},
\end{align}
where the last inequality holds since $s \geq 2+\frac{l_0}{2}$.  If $\frac{7}{2}\leq s \leq \frac{9}{2}$, then $-\frac{1}{2}\leq b \leq \frac{1}{2}$ and by lemma \ref{sobinequality}
\begin{align}
    \lVert \partial_t u \cdot n \rVert_{H^b}\lesssim \lVert \partial_t u \rVert_{H^b}\lVert n \rVert_{H^{l}}.
\end{align}
Similarly we have $\lVert u\cdot \partial_t n\rVert_{H^b} \lesssim 1$, and hence
\begin{align}
    \frac{d}{dt}\lVert \partial_t u\rVert_{H^b}^2 \lesssim(\lVert \partial_t u \rVert_{H^b}^2 + \lVert \partial_t u \rVert_{H^b}),
\end{align}
from which Gronwall's inequality yields $\lVert \partial_t u \rVert_{H^{s-4}}\leq C(T)$. Using similar arguments, we have 
\begin{align}
 \lVert un \rVert_{H^{b}}\leq C(T),   
\end{align}
and from
\begin{align}
    \lVert \Delta u \rVert_{H^{s-2}} &\leq \lVert \langle \epsilon \nabla \rangle^{-2} \partial_t u\rVert_{H^{s-2}} + \lVert \langle \epsilon \nabla \rangle^{-2}(un)\rVert_{H^{s-2}}
\end{align}
follows $\lVert u \rVert_{H^s} \leq C(T)$. To show (\ref{induct1}), consider the base case $s_0 = 2+\frac{l_0}{2}$, where by conservation of energy, $\lVert u \rVert_{H^{2-\frac{l_0}{2}}}\leq C$. Then for all $s \in [s_0,s_1]$ where $s_1 = s_0 + l_0$, we have $\lVert u \rVert_{H^{s-l_0}}\leq \lVert u \rVert_{H^{s_0}}$. We iterate this process by an increment of $l_0$ to cover the entire range of $s \geq 2+\frac{l_0}{2}$. It remains to prove the $l_0=0$ case. Let $s\geq 2+\frac{\epsilon_0}{2}$ where $0\leq \epsilon_0 \leq 1$. As before, we consider the energy estimate
\begin{align}\label{energyest2}
    \frac{d}{dt}\Big( \lVert \partial_t n \rVert_{H^{s-2}}^2 + \lVert \partial_x n\rVert_{H^{s-2}}^2+ \epsilon^2 \lVert \partial_{xx} n\rVert_{H^{s-2}}^2\Big)&\lesssim \lVert \partial_t n\rVert_{H^{s-2}}^2+ \lVert u \rVert_{H^{s}}^2\\
    \frac{d}{dt}\lVert \partial_t u\rVert_{H^{s-4}}^2&\lesssim \lVert \partial_t u \rVert_{H^{s-4}}( \lVert \partial_t u \cdot n\rVert_{H^{s-4}}+\lVert u\cdot \partial_t n\rVert_{H^{s-4}})\nonumber,
\end{align}
where by a similar argument as before we derive
\begin{align}
    \lVert \partial_t u \cdot n \rVert_{H^{s-4}} \lesssim \lVert \partial_t u \rVert_{H^{s-4}}\lVert n \rVert_{H^{s-\epsilon_0}};\:\lVert u \partial_t n \rVert_{H^{s-4}}\lesssim \lVert u \rVert_{H^s}\lVert \partial_t n \rVert_{H^{s-4}}.
\end{align}
Recall
\begin{align}
    \lVert \Delta u \rVert_{H^{s-2}} \lesssim \lVert \partial_t u \rVert_{H^{s-4}}+ \lVert un \rVert_{H^{s-4}}\lesssim \lVert \partial_t u \rVert_{H^{s-4}} + \lVert u \rVert_{H^{s-\epsilon_0}}\lVert n \rVert_{H^{s-\epsilon_0}},
\end{align}
and hence
\begin{align}
    \frac{d}{dt}\Big( \lVert \partial_t n \rVert_{H^{s-2}}^2 + \lVert \partial_x n\rVert_{H^{s-2}}^2+ \epsilon^2 \lVert \partial_{xx} n\rVert_{H^{s-2}}^2\Big)&\lesssim \lVert \partial_t n\rVert_{H^{s-2}}^2+\lVert \partial_t u \rVert_{H^{s-4}}^2+\lVert u_0 \rVert_{L^2}^2+ (\lVert u \rVert_{H^{s-\epsilon_0}}\lVert n \rVert_{H^{s-\epsilon_0}})^2\\
    \frac{d}{dt}\lVert \partial_t u\rVert_{H^{s-4}}^2&\lesssim \lVert \partial_t u \rVert_{H^{s-4}}^2 \lVert n \rVert_{H^{s-\epsilon_0}}+\lVert \partial_t u \rVert_{H^{s-4}}(\lVert u_0\rVert_{L^2}+\lVert \partial_t u \rVert_{H^{s-4}} + \lVert u \rVert_{H^{s-\epsilon_0}}\lVert n \rVert_{H^{s-\epsilon_0}})\lVert \partial_t n \rVert_{H^{s-4}}\nonumber.
\end{align}
If $\epsilon_0=0$, then integrate the first differential inequality of (\ref{energyest2}) to obtain an exponential growth bound on $\lVert n \rVert_{H^2} + \lVert \partial_t n \rVert_{L^2}$, and then apply the Gronwall's inequality again to the second differential inequality of (\ref{energyest2}). If $s>2$, use the exponential growth bound for $s=2$ for the base case $s_0 = 2+\frac{\epsilon_0}{2}$. Such exponential bound is obtained for all $s \geq 2+\frac{\epsilon_0}{2}$ by iterating the Gronwall's inequality. Since $\epsilon_0>0$ is arbitrary, we have an exponential bound on the Sobolev norms of solutions for all $s\geq 2$.
\end{proof}

\begin{proof}[proof of lemma \ref{uniformbound}]
By an inspection, $|H_0^\epsilon|\leq C(R)$ uniformly in $\epsilon$. Since mass is conserved and
\begin{align}
    \lVert \partial_x u^\epsilon \rVert_{L^2}^2+\frac{1}{2}\lVert n^\epsilon \rVert_{L^2}^2 +\frac{1}{2}\lVert \partial_t n^\epsilon \rVert_{H^{-1}}&\leq |H_0^\epsilon| + \left|\int n^\epsilon |u^\epsilon|^2\right|\nonumber\\
    &\leq |H_0^\epsilon|+ \frac{1}{4} \lVert n^\epsilon \rVert_{L^2}^2 + \frac{1}{2} \lVert \partial_x u^\epsilon \rVert_{L^2}^2 + C^\prime,
\end{align}
where the last inequality is by the Gagliardo-Nirenberg inequality, and $C^\prime$ is independent of $\epsilon$, we have
\begin{align}\label{energybound}
    \sup_{\epsilon}\sup_{t \in [0,\infty)}\lVert (u^\epsilon,n^\epsilon,\partial_t n^\epsilon)\rVert_{H_0^{1,0}}\leq C(R).
\end{align}
Now assume $s \geq 4, T>0$ and the following inductive hypothesis:
\begin{align}\label{induct2}
    \lVert u^\epsilon \rVert_{H^{s^\prime-2}}, \lVert n^\epsilon \rVert_{H^{s^\prime-2}}, \lVert n^\epsilon \rVert_{H^1} \leq C(T,R),
\end{align}
uniformly in $\epsilon>0$ and $t\in [0,T]$. Energy conservation and (\ref{induct2}) yield $\lVert u^\epsilon n^\epsilon \rVert_{H^{s^\prime-2}} \lesssim C(T,R)$ since $\lVert u^\epsilon n^\epsilon \rVert_{H^{s^\prime-2}} \lesssim \lVert u^\epsilon \rVert_{H^{s^\prime-2}}\lVert n^\epsilon \rVert_{H^{s^\prime-2}} \leq C(T,R)$ for $s^\prime>\frac{5}{2}$ and $\lVert u^\epsilon n^\epsilon \rVert_{H^{s^\prime-2}} \leq \lVert u^\epsilon n^\epsilon \rVert_{H^{1}}\lesssim \lVert u^\epsilon \rVert_{H^1}\lVert n^\epsilon \rVert_{H^1} \leq C(T,R)$ for $s^\prime \in [1,\frac{5}{2}]$. Moreover since $\lVert u^\epsilon \rVert_{\dot{H}^{s^\prime}} \leq \lVert \langle \epsilon \nabla \rangle^2 \Delta u^\epsilon \rVert_{H^{s^\prime-2}}$ for all $\epsilon\geq 0$, we have
\begin{align}
    \lVert u^\epsilon \rVert_{H^{s^\prime}}^2 \lesssim \lVert u_0^\epsilon \rVert_{L^2}^2 + \lVert \partial_t u^\epsilon \rVert_{H^{s^\prime-2}}^2 + \lVert u^\epsilon n^\epsilon \rVert_{H^{s^\prime-2}}^2\leq \lVert u_0^\epsilon \rVert_{L^2}^2 + \lVert \partial_t u^\epsilon \rVert_{H^{s^\prime-2}}^2 + C(T,R),
\end{align}
and hence the differential inequality obtained from the first equation of (\ref{eq:main}) is
\begin{align}\label{gronwall1}
    \frac{d}{dt}\Big(\lVert \partial_t n^\epsilon \rVert_{H^{s^\prime-2}}^2 + \lVert \partial_x n^\epsilon \rVert_{H^{s^\prime-2}}^2+\epsilon^2\lVert \partial_{xx}n^\epsilon \rVert_{H^{s^\prime-2}}^2\Big) \lesssim \lVert \partial_t n^\epsilon \rVert_{H^{s^\prime-2}}^2 + \lVert \partial_t u^\epsilon \rVert_{H^{s^\prime-2}}^2 + C(T,R),
\end{align}
where its LHS is well-defined since $s^\prime \leq s-2 \leq l$ from $(s,l)\in \Omega_G$.

Similarly we derive another differential inequality as in (\ref{energyinequaltiy2}) with $b= s^\prime-2$. A similar calculation as before shows 
\begin{align}
    \lVert \partial_t u^\epsilon \cdot n^\epsilon \rVert_{H^{s^\prime-2}}\lesssim \lVert \partial_t u^\epsilon \rVert_{H^{s^\prime-2}};\:\lVert u^\epsilon \partial_t n^\epsilon \rVert_{H^{s^\prime-2}} \lesssim \lVert \partial_t n^\epsilon\rVert_{H^{s^\prime-2}},
\end{align}
by the inductive hypothesis where the implicit constants are independent of $\epsilon$, and hence by the Young's inequality
\begin{align}\label{gronwall2}
    \frac{d}{dt}\lVert \partial_t u^\epsilon \rVert_{H^{s^\prime-2}}^2 \lesssim \lVert \partial_t u^\epsilon \rVert_{H^{s^\prime-2}}^2 + \lVert \partial_t n^\epsilon \rVert_{H^{s^\prime-2}}^2.
\end{align}
Integrating (\ref{gronwall1}) and (\ref{gronwall2}), we obtain
\begin{align}
\lVert \partial_t n^\epsilon \rVert_{H^{s^\prime-2}}^2 + \lVert \partial_x n^\epsilon \rVert_{H^{s^\prime-2}}^2+\epsilon^2\lVert \partial_{xx}n^\epsilon \rVert_{H^{s^\prime-2}}^2 + \lVert \partial_t u^\epsilon \rVert_{H^{s^\prime-2}}^2 \leq C(T,R).    
\end{align}
Now we check (\ref{induct2}). By \parencite[proposition 2.4]{guo2013global}, we have $\sup\limits_{\epsilon} \lVert n^\epsilon \rVert_{C_TH^1_x}\leq C(T,R)$. On the other hand, if $1\leq s^\prime \leq 2$, then $    \lVert u^\epsilon \rVert_{H^{s^\prime-2}}, \lVert n^\epsilon \rVert_{H^{s^\prime-2}} \leq C$, independent of $\epsilon$, by (\ref{energybound}). Hence for such $s^\prime$
\begin{align}
\sup\limits_{\epsilon>0}\lVert (u^\epsilon,n^\epsilon,\partial_t n^\epsilon)\rVert_{C_T H_0^{s^\prime,s^\prime-1}}\leq C(T,R),    
\end{align}
and using $s_0^\prime=2$ as a base case, we can extend the uniform bound to all $2\leq s^\prime \leq 3$ from which we iterate to cover the entire $1 \leq s^\prime \leq s-2$.
\end{proof}

\begin{remark}
When $T = \infty$, we do not expect continuity as $\epsilon\rightarrow 0$. In fact, the data-to-solution map (where we extend the domain from $D$ to $D\times [0,\infty)$ where $D$ is the data space and $\epsilon \in [0,\infty)$) fails to be continuous at any $\epsilon\geq 0$ as the following example shows. Let $D = H_0^{s,l}$ or $H^{s,l}$ and $(u_0,n_0,n_1) = (\langle N \rangle^{-s}e^{iNx},0,0)$ for $N \in \mathbb{R}\setminus \left\{0\right\}$. Then one can show $(u,n,\partial_t n)(x,t) = (\langle N \rangle^{-s}e^{-it(N^2+\epsilon^2 N^4)+iNx},0,0)$ is the (classical) solution. One can explicitly compute
\begin{equation}
    \sup_{t\in [0,\infty)} \lVert \langle N \rangle^{-s}e^{-it(N^2+\epsilon^2 N^4)+iNx}-\langle N \rangle^{-s}e^{-it(N^2+\epsilon_0^2 N^4)+iNx}\rVert_{H^s_x} = \sup_{t\in [0,\infty)}|1-e^{it(\epsilon^2-\epsilon_0^2)N^4}|=2.
\end{equation}
Note that this example is not valid on $\mathbb{R}^d$.
\end{remark}

\begin{remark}
On $\mathbb{R}^d$, the derivative flow $\partial_t n$ is split into a low and high-frequency; see \cite{guo2013global}. On $\mathbb{T}$, we simply integrate out the mean by (\ref{variable}).
\end{remark}

\begin{remark}
One way to prove remark \ref{semiclassical} is to regularize the intial data, which is done via a particular convolution kernel on $\mathbb{R}^d$ in \cite{guo2013global}, and use the triangle inequality on
\begin{align}
    (u^\epsilon-u^0,n^\epsilon-n^0,\partial_t n^\epsilon-\partial_t n^0) =(u^\epsilon-u^{0,h},n^\epsilon-n^{0,h},\partial_t n^\epsilon-\partial_t n^{0,h})+(u^{0,h}-u^0,n^{0,h}-n^0,\partial_t n^{0,h}-\partial_t n^0), 
\end{align}
where on a periodic domain, one can define a family of mollifiers as a Fourier multiplier as follows: let $\eta \in C_c^\infty (\mathbb{R})$ that is identically one in the neighborhood of the origin. For $h>0$, define $\widehat{J_h f} (k) =\eta(hk)\widehat{f}(k)$ for all $f \in L^1(\mathbb{T})$. Then $\lVert J_h f - f \rVert_{H^s} \xrightarrow[h\rightarrow 0]{}0$ and for $\sigma>0$
\begin{align}
    \lVert J_h f - f \rVert_{H^{s-\sigma}} \leq C(\sigma)h^\sigma\lVert f \rVert_{H^s};\: \lVert J_h f \rVert_{H^{s+\sigma}} \leq \frac{C(\sigma)}{h^\sigma} \lVert f \rVert_{H^s}.
\end{align}
\end{remark}
\section{Appendix.}\label{appendix}
\begin{proof}[proof of proposition \ref{p:non-sharp}]
The main idea is to add a fourth-order perturbation to the spacetime functions constructed in \cite{takaoka1999well}. Once those examples are given, one can directly substitute the examples to the inequalities in proposition \ref{p:non-sharp} to derive a set of necessary conditions on the scaling parameter $N\gg 1$. Let $\delta(k)$ be the Kronecker delta function defined on $\mathbb{Z}$ and let $\phi(\tau)$ be a smooth bump function on $\mathbb{R}$ with a compact support. It suffices to consider $u_i,\:1\leq i \leq 8$, $n_i,\:1\leq i \leq 4$, and $v_i,\:5 \leq i \leq 8$ where
\begin{align*}
    \widehat{u}_1(k,\tau) = \delta(k+N)\phi (\tau+\alpha N^2 + \epsilon^2 N^4)&;\: \widehat{n}_1(k,\tau) = \delta(k-2N)\phi(|\tau|-2\beta N\langle 2\epsilon N\rangle),\\
    \widehat{u}_2(k,\tau) = \delta(k+N)\phi (\tau+\alpha N^2 + \epsilon^2 N^4+2\beta N \langle 2\epsilon N\rangle)&;\: \widehat{n}_2(k,\tau) = \delta(k-2N)\phi(|\tau|-2\beta N\langle 2\epsilon N\rangle),\\
    \widehat{u}_3(k,\tau) = \delta(k)\phi (\tau)&;\: \widehat{n}_3(k,\tau) = \delta(k-N)\phi(|\tau|-\beta N\langle \epsilon N\rangle),\\
    \widehat{u}_4(k,\tau) = \delta(k)\phi (\tau+\alpha N^2 + \epsilon^2 N^4+\beta N \langle \epsilon N\rangle)&; \:\widehat{n}_4(k,\tau)= \delta(k-N)\phi(|\tau|-\beta N\langle \epsilon N\rangle),\\
    \widehat{u}_5(k,\tau) = \delta(k-N)\phi (\tau+\alpha N^2 + \epsilon^2 N^4)&;\:\widehat{v}_5(k,\tau)= \delta(k+N)\phi (\tau+\alpha N^2 + \epsilon^2 N^4),\\
    \widehat{u}_6(k,\tau) = \delta(k-N)\phi (\tau+\alpha N^2 + \epsilon^2 N^4-2\beta N \langle 2\epsilon N\rangle)&;\:\widehat{v}_6(k,\tau)= \delta(k+N)\phi (\tau+\alpha N^2 + \epsilon^2 N^4),\\
    \widehat{u}_7(k,\tau) = \delta(k)\phi (\tau)&;\:\widehat{v}_7(k,\tau)= \delta(k-N)\phi (\tau+\alpha N^2 + \epsilon^2 N^4),\\
    \widehat{u}_8(k,\tau) = \delta(k)\phi (\tau+\alpha N^2 + \epsilon^2 N^4+\beta N \langle \epsilon N\rangle)&;\:\widehat{v}_8(k,\tau)= \delta(k-N)\phi (\tau+\alpha N^2 + \epsilon^2 N^4).
\end{align*}
\end{proof}

\printbibliography

\end{document}